\newcounter{dummy}
\numberwithin{dummy}{section}
\newtheorem{thm}[dummy]{Theorem}
\newtheorem{defn}[dummy]{Definition}
\newtheorem{conj}[dummy]{Conjecture}
\newtheorem{lem}[dummy]{Lemma}
\newtheorem{prop}[dummy]{Proposition}
\newtheorem{cor}[dummy]{Corollary}
\numberwithin{equation}{section}
\renewcommand\Re{\operatorname{Re}}
\renewcommand\Im{\operatorname{Im}}
\DeclareMathOperator{\id}{id}
\DeclareMathOperator{\an}{an}
\DeclareMathOperator{\pr}{pr}
\DeclareMathOperator{\Gal}{\mathrm{Gal}}
\DeclareMathOperator{\GL}{\mathrm{GL}}
\DeclareMathOperator{\M}{\mathrm{M}}
\DeclareMathOperator{\Sp}{\mathrm{Sp}}
\DeclareMathOperator{\Aut}{\mathrm{Aut}}
\DeclareMathOperator{\End}{\mathrm{End}}
\DeclareMathOperator{\Hom}{\mathrm{Hom}}
\DeclareMathOperator{\diag}{\mathrm{diag}}
\DeclareMathOperator{\tors}{\mathrm{tors}}
\DeclareMathOperator{\Tr}{\mathrm{Tr}}
\renewcommand{\@biblabel}[1]{[#1]\hfill}
\begin{document}
\title{Unlikely Intersections between Isogeny Orbits and Curves}

\author{Gabriel A. Dill}
\address{Departement Mathematik und Informatik, Universit\"{a}t Basel, Spiegelgasse~1, CH-4051 Basel}
\email{gabriel.dill@unibas.ch}
\date{\today}

\begin{abstract}
Fix an abelian variety $A_0$ and a non-isotrivial abelian scheme over a smooth irreducible curve, both defined over the algebraic numbers. Consider the union of all images of translates of a fixed finite-rank subgroup of $A_0$, also defined over the algebraic numbers, by abelian subvarieties of $A_0$ of codimension at least $k$ under all isogenies between $A_0$ and some fiber of the abelian scheme. We characterize the curves inside the abelian scheme which are defined over the algebraic numbers, dominate the base curve and potentially intersect this set in infinitely many points. Our proof follows the Pila-Zannier strategy.
\end{abstract}
\subjclass[2010]{11G18, 11G50, 11U09, 14G40, 14K02}

\keywords{Unlikely intersections, isogeny, abelian scheme, Andr\'{e}-Pink-Zannier conjecture}
\maketitle
\tableofcontents

\section{Introduction}\label{sec:Introduction}
Let $K$ be a field of characteristic zero, let $\mathcal{S}$ be a geometrically irreducible smooth curve and let $\mathcal{A} \to \mathcal{S}$ be an abelian scheme over $\mathcal{S}$ of relative dimension $g$, both defined over $K$. The structural morphism will be denoted by $\pi: \mathcal{A} \to \mathcal{S}$ and is smooth and proper. For any (possibly non-closed) point $s$ of $\mathcal{S}$ and any subvariety $\mathcal{V}$ of $\mathcal{A}$, we denote the fiber of $\mathcal{V}$ over $s$ by $\mathcal{V}_s$. The zero section $\mathcal{S} \to \mathcal{A}$ is denoted by $\epsilon$.

We fix an algebraic closure $\bar{K}$ of $K$. All varieties that we consider will be defined over $\bar{K}$, unless explicitly stated otherwise. All varieties will be identified with the set of their closed points over a prescribed algebraic closure of their field of definition. Subvarieties will always be closed. By ``irreducible'', we will always mean ``geometrically irreducible''. If $F$ is any field extension of the field over which the variety $V$ is defined, we will denote the set of points of $V$ that are defined over $F$ by $V(F)$. If $A$ is an abelian variety, we denote by $A_{\tors}$ the set of its torsion points.

We fix an abelian variety $A_0$ of dimension $g$ and a finite set of $\mathbb{Z}$-linearly independent points $\gamma_1, \dots, \gamma_r$ in $A_{0}$. The set can also be empty (i.e. $r=0$). We define
\[ \Gamma = \{ \gamma \in A_0; \mbox{ } \exists N \in \mathbb{N} \mbox{: } N\gamma \in \mathbb{Z}\gamma_1+\hdots+\mathbb{Z}\gamma_r \}, \]
a subgroup of $A_0$ of finite rank (and every subgroup of $A_0$ of finite rank is contained in a group of this form), for us $\mathbb{N} = \{1,2,3,\hdots\}$.

The $(g-k)$-enlarged isogeny orbit of $\Gamma$ (in the family $\mathcal{A}$) is defined as
\begin{multline}
\mathcal{A}_{\Gamma}^{[k]} = \{ p \in \mathcal{A}_s; \mbox{ } s \in \mathcal{S}, \mbox{ } \exists \phi: A_0 \to \mathcal{A}_s \mbox{ isogeny and an abelian subvariety}\\
B_0 \subset A_0 \mbox{ of codimension $\geq k$ such that } p \in \phi(\Gamma+B_0) \}.
\end{multline}
This condition is equivalent to the existence of an isogeny $\psi: \mathcal{A}_s \to A_0$ with $\psi(p) \in \Gamma+B_0$. The isogeny orbit of $\Gamma$ is defined as $\mathcal{A}_{\Gamma} = \mathcal{A}_{\Gamma}^{[g]}$.

Let $\xi$ be the generic point of $\mathcal{S}$. We fix an algebraic closure $\overline{K(\mathcal{S})}$ of $\bar{K}(\mathcal{S})$ and let $\left(\mathcal{A}_\xi^{\overline{K(\mathcal{S})}/\bar{K}},\Tr\right)$ denote the $\overline{K(\mathcal{S})}/\bar{K}$-trace of $\mathcal{A}_\xi$, as defined in Chapter VIII, \S 3 of \cite{MR0106225}, where we consider $\mathcal{A}_\xi$ as a variety over $\overline{K(\mathcal{S})}$ by abuse of notation. We call $\mathcal{A}$ isotrivial if $\Tr\left(\mathcal{A}_\xi^{\overline{K(\mathcal{S})}/\bar{K}}\right) = \mathcal{A}_\xi$. In this article, we investigate the following conjecture, a slightly modified version of Gao's Conjecture 1.2, which he calls the Andr\'{e}-Pink-Zannier conjecture, in \cite{G15}.

\begin{conj} (Modified Andr\'{e}-Pink-Zannier over a curve)\label{conj:APZ}
Suppose that $\mathcal{A} \to \mathcal{S}$ is not isotrivial. Let $\mathcal{V}Ê\subset \mathcal{A}$ be an irreducible subvariety. If $\mathcal{A}_{\Gamma} \cap \mathcal{V}$ is Zariski dense in $\mathcal{V}$, then one of the following two conditions is satisfied:
\begin{enumerate}[label=(\roman*)]
\item The variety $\mathcal{V}$ is a translate of an abelian subvariety of $\mathcal{A}_s$ by a point of $\mathcal{A}_{\Gamma} \cap \mathcal{A}_s$ for some $s \in \mathcal{S}$.
\item We have $\pi(\mathcal{V}) = \mathcal{S}$ and over $\overline{K(\mathcal{S})}$, every irreducible component of $\mathcal{V}_\xi$ is a translate of an abelian subvariety of $\mathcal{A}_\xi$ by a point in $(\mathcal{A}_\xi)_{\tors} + \Tr\left(\mathcal{A}_\xi^{\overline{K(\mathcal{S})}/\bar{K}}(\bar{K})\right)$.
\end{enumerate}
\end{conj}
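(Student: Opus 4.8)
The plan is to run the Pila--Zannier strategy. Since the method is arithmetic I take the base field to be $\bar{\mathbb{Q}}$ (as in the abstract), fix a number field $K_0$ over which all the data ($\mathcal{A}\to\mathcal{S}$, $A_0$, the $\gamma_i$, $\mathcal{V}$) are defined, and prove the formally stronger statement in which $\mathcal{V}$ is an irreducible curve $\mathcal{C}$ dominating $\mathcal{S}$ and $\mathcal{A}_\Gamma$ is replaced by the larger set $\mathcal{A}_\Gamma^{[k]}$; as Zariski density in a curve is the same as infinitude of the intersection and $\mathcal{A}_\Gamma=\mathcal{A}_\Gamma^{[g]}\subseteq\mathcal{A}_\Gamma^{[k]}$, this gives Conjecture~\ref{conj:APZ} when $\pi(\mathcal{V})=\mathcal{S}$ and $\dim\mathcal{V}=1$ (the case $\pi(\mathcal{V})$ a point being a variant of the Andr\'e--Pink--Zannier statement over the single abelian variety $A_0$, and alternative~(i) being excluded once $\pi(\mathcal{C})=\mathcal{S}$, since a curve dominating $\mathcal{S}$ is not contained in a fibre). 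First I would, after a finite surjective base change (which only enlarges the intersection, the isogeny orbit of the base-changed family being the preimage of $\mathcal{A}_\Gamma^{[k]}$) and after discarding finitely many fibres, arrange that $\mathcal{A}\to\mathcal{S}$ carries a principal level structure, so that it is the pullback along a non-constant morphism $\mathcal{S}\to\mathcal{A}_g$ of the universal principally polarized abelian variety; then $\mathcal{A}(\mathbb{C})$ is uniformized by a semialgebraic domain $\mathfrak{X}\subset\mathbb{C}^g\times\mathfrak{H}_g$ via a period map $u$ whose restriction to a suitable fundamental set for the $\mathbb{Z}^{2g}\rtimes\Sp_{2g}(\mathbb{Z})$-action is definable in $\mathbb{R}_{\mathrm{an},\exp}$ (Peterzil--Starchenko).

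Next I would encode membership in $\mathcal{A}_\Gamma^{[k]}$ linear-algebraically. Fixing a symplectic basis of $H_1(A_0,\mathbb{Z})$ and realising $\Gamma$ in $\mathbb{C}^g$ through $\gamma_1,\dots,\gamma_r$ and their divisions, a witness that $p\in\mathcal{C}_s\cap\mathcal{A}_\Gamma^{[k]}$ amounts to: an isogeny $\psi\colon\mathcal{A}_s\to A_0$ (a matrix in $\M_{2g}(\mathbb{Z})$ of nonzero determinant conjugating one period lattice into the other compatibly with polarizations), a vector $(a_1,\dots,a_r;N)\in\mathbb{Z}^r\times\mathbb{N}$, and the period lattice of a codimension-$\geq k$ abelian subvariety $B_0\subset A_0$, such that upstairs $\psi$ sends $u^{-1}(p)$ into $\tfrac1N\sum a_i\gamma_i+(\text{the }B_0\text{-subspace})+H_1(A_0,\mathbb{Z})$. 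Set the complexity $\Delta(p)$ to be the maximum of $|\det\psi|$, the $|a_i|$, $N$ and the height of $B_0$. As this discrete, polynomially parametrized data varies, these conditions together with $u^{-1}(\mathcal{C})$ cut out a single set $Z$ definable in $\mathbb{R}_{\mathrm{an},\exp}$, and each $p\in\mathcal{C}\cap\mathcal{A}_\Gamma^{[k]}$ yields a rational point of $Z$ of height $\ll\Delta(p)^{c_1}$.

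The arithmetic core consists of two quantitative estimates. (a) A height bound $h(p)\ll\Delta(p)^{c_2}$ for $p\in\mathcal{C}\cap\mathcal{A}_\Gamma^{[k]}$, with $h$ a Weil height for a fixed projective embedding of $\mathcal{A}$: this combines the Masser--W\"ustholz isogeny estimates (to transport canonical heights along $\psi$ with only polynomial loss in $\det\psi$), the quadraticity of the N\'eron--Tate height on $A_0$ applied to $\tfrac1N\sum a_i\gamma_i$, and Silverman's theory of heights in families along $\mathcal{C}$ (to compare the fibral canonical height with $h(p)$ up to $O(1)$, since $\mathcal{C}\to\mathcal{S}$ is finite and $\mathcal{S}\to\mathcal{A}_g$ is algebraic). (b) A Galois lower bound $[\text{field of definition of }p:K_0]\gg\Delta(p)^{\delta}$ for some $\delta>0$: a conjugate of $p$ is witnessed by a conjugate isogeny of essentially the same degree (again by Masser--W\"ustholz), and one bounds the number of distinct such witnesses from below using lower bounds on the Galois image acting on the relevant torsion and on the $\Gamma$-divisions, in the spirit of Serre's open-image theorem and its explicit forms. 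I expect step~(b) to be the main obstacle: making the Galois lower bound \emph{uniform as the fibre $\mathcal{A}_s$ varies} is precisely what forces the base to be a curve (one leans on specialization and N\'eron-model arguments special to dimension one), and the moving codimension-$k$ abelian subvarieties must be accommodated as well.

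Finally I would play these estimates against the definability of $Z$. If $\mathcal{C}\cap\mathcal{A}_\Gamma^{[k]}$ were infinite, then for suitable $T$ the rational points of $Z$ of height $\le T$ would be $\gg T^{\delta/c_1}$ in number, exceeding the Pila--Wilkie bound for the transcendental part of $Z$; hence $Z$ contains a connected positive-dimensional semialgebraic set $W$ passing through $u^{-1}$ of infinitely many of the $p$, and, after a pigeonhole, the witnessing conditions hold identically on $W$ for one fixed isogeny $\psi$. The Ax--Lindemann--Weierstrass theorem for the universal abelian variety over $\mathcal{A}_g$ (Gao's mixed Ax--Lindemann, after Pila--Tsimerman and Ullmo--Yafaev) then forces the Zariski closure of $u(W)$ to be a weakly special subvariety of $\mathcal{A}$ containing $\psi$-pullbacks of $\Gamma$-cosets meeting $\mathcal{C}$; as $\mathcal{C}$ is a curve this pins $\mathcal{C}$ down, up to isogeny and translation by a torsion-plus-trace section, to be weakly special, and the density of $\mathcal{A}_\Gamma^{[k]}$ upgrades this to the statement that over $\overline{K(\mathcal{S})}$ the fibre $\mathcal{C}_\xi$ lies in $(\mathcal{A}_\xi)_{\tors}+\Tr\!\left(\mathcal{A}_\xi^{\overline{K(\mathcal{S})}/\bar{K}}(\bar{K})\right)$. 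Undoing the initial base change then yields alternative~(ii).
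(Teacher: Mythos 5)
The statement you are asked to prove is the full Conjecture~\ref{conj:APZ}: arbitrary irreducible $\mathcal{V}\subset\mathcal{A}$ over an arbitrary field $K$ of characteristic zero. Your opening move --- passing to ``the formally stronger statement in which $\mathcal{V}$ is an irreducible curve $\mathcal{C}$ dominating $\mathcal{S}$'' --- is not a reduction. Replacing $\mathcal{A}_\Gamma$ by $\mathcal{A}_\Gamma^{[k]}$ does strengthen the hypothesis-side, but restricting $\mathcal{V}$ to be a curve is a genuine special case, and nothing in your outline recovers the case $\dim\mathcal{V}\geq 2$. The paper itself only proves the curve case (Theorems~\ref{thm:mainmain} and~\ref{thm:main}); the general conjecture remains open. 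The step of your argument that actually breaks in higher dimension is your height bound~(a): it rests on the finiteness of $\mathcal{C}\to\mathcal{S}$, which makes the pullback of an ample bundle on $\overline{\mathcal{S}}$ ample on $\overline{\mathcal{C}}$ and hence dominates the fibral canonical height up to $O(1)$ (this is Lemma~\ref{lem:curveheight}). When $\mathcal{V}$ dominates $\mathcal{S}$ with positive-dimensional fibres no such comparison exists, and the paper explicitly names the absence of a height upper bound for (a large enough subset of) $\mathcal{A}_\Gamma\cap\mathcal{V}$ as \emph{the} obstruction to the higher-dimensional case. Likewise, your restriction to $K$ a number field silently discards the transcendental case of the conjecture, which would require Moriwaki heights or specialization and is not addressed.

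Even within the curve case, where your outline does track the paper's strategy (Masser--W\"ustholz, Peterzil--Starchenko definability, Habegger--Pila counting, Gao's Ax of log type), two points deserve flagging. First, your Galois step~(b) proposes a \emph{lower} bound on $[\bar{\mathbb{Q}}(p):K_0]$ via open-image-type control of the Galois action on torsion and $\Gamma$-divisions, uniformly as the fibre varies in an isogeny class; such uniform lower bounds are not available and are not needed. The paper instead proves the complementary \emph{upper} bounds: all witness data ($\deg\phi_s$, the degree of the abelian subvariety, $N$ and the $a_i$) is polynomial in $[K(p):K]$ (Proposition~\ref{prop:galoisbounds}), using Gaudron--R\'emond's isogeny estimate, R\'emond's decomposition lemma, Habegger--Pila's degree bound for the smallest abelian subvariety through a multiple of the point, and Masser's theorem on points of small height --- no open-image input. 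Second, your witnessing isogeny is described as ``compatibly with polarizations''; the conjecture does not assume polarized isogenies, and arranging that the rational representation of the chosen isogeny has height polynomial in its degree is precisely the content of the paper's extension of Orr's work (Proposition~\ref{prop:isogenyheightbounds} and Corollary~\ref{cor:choiceofisogeny}), which your sketch omits.
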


We need to formulate the conclusion in this somewhat involved manner in order to account for the fact that there can exist abelian subvarieties of $\mathcal{A}_\xi$ and points in $(\mathcal{A}_\xi)_{\tors}$ that are not defined over $\bar{K}(\mathcal{S})$ and that the morphism $\Tr$ isn't necessarily defined over $\bar{K}(\mathcal{S})$. It can be considered one relative version of the Mordell-Lang conjecture, proven for abelian varieties by Vojta \cite{MR1109352}, Faltings \cite{MR1307396} and Hindry \cite{MR969244} and in its most general form by McQuillan in \cite{MR1323985}, in analogy to the relative Manin-Mumford results proven by Masser and Zannier in e.g. \cite{MR2766181}. As we can always assume that $K$ is finitely generated over $\mathbb{Q}$ and then embed it in $\mathbb{C}$, it suffices to prove the conjecture for subfields of $\mathbb{C}$.

\emph{Prima facie}, Gao's conjecture only concerns irreducible subvarieties of the universal family of principally polarized abelian varieties of fixed dimension and fixed sufficiently large level structure. However, we can assume without loss of generality that $\mathcal{A}$ is contained in a suitable universal family $\mathfrak{A}_{g,l}$ corresponding to principally polarized abelian varieties of dimension $g$ with so-called orthogonal level $l$-structure (cf. Sections \ref{sec:preliminaries} and \ref{sec:mainproof}), which reduces Conjecture \ref{conj:APZ} to the case considered by Gao. The condition that the base $\mathcal{S}$ in this situation is a weakly special curve in the moduli space seems to be missing in our formulation of the conjecture, but it follows directly from Orr's Theorem 1.2 in \cite{MR3377393} that Conjecture \ref{conj:APZ} can be further reduced to this case. The conjecture is stronger than Gao's in that it involves a subgroup of rank possibly larger than $1$ and doesn't demand that the isogenies are polarized. It is weaker in that the base variety $\mathcal{S}$ is assumed to be a curve.

Gao showed in Section 8 of \cite{G15} that Conjecture \ref{conj:APZ} follows from Pink's Conjecture 1.6 in \cite{MR2166087} in the more general setting of generalized Hecke orbits in mixed Shimura varieties, where it is enough to assume Pink's conjecture for all fibered powers of universal families of principally polarized abelian varieties of fixed dimension and fixed, sufficiently large level structure. By Theorem 3.3 in \cite{PUnpubl}, Conjecture 1.6 in \cite{MR2166087} is a consequence of Pink's even more general Conjecture 1.1 in \cite{PUnpubl} on unlikely intersections in mixed Shimura varieties. If $\Gamma$ has rank zero, Conjecture \ref{conj:APZ} is contained in a special-point conjecture of Zannier (see \cite{G15}, Conjecture 1.4).

Progress towards Conjecture \ref{conj:APZ} has only been made if $\mathcal{V} = \mathcal{C}$ is a curve or if the rank of $\Gamma$ is zero. Furthermore, many results are confined to the case where $K$ is a number field. Lin and Wang have proved the conjecture for $K$ a number field, $\mathcal{V}$ a curve, $\Gamma$ finitely generated and $A_0$ simple (Theorem 1.1 in \cite{MR3383643}). Habegger has proved it for $K$ a number field, $\Gamma$ of rank zero and $\mathcal{A}$ a fibered power of a non-isotrivial elliptic scheme (Theorem 1.2 in \cite{MR3181568}). Pila has proved it for arbitrary $K$, $\Gamma$ of rank zero and $\mathcal{A}$ inside a product of elliptic modular surfaces (Theorem 6.2 in \cite{MR3164515}). Gao has proved it for arbitrary $K$ and $\Gamma$ of rank zero (Theorem 1.5 in \cite{G15}) as well as for arbitrary $K$, $\mathcal{V}$ a curve and $\Gamma$ of rank at most one, but in this case he has to fix polarizations of $A_0$ and $\mathcal{A}$ and assume that the isogenies are polarized (Theorem 1.6 in \cite{G15}).

From now on, we will always assume that $K \subsetÊ\mathbb{C}$ is a number field and take as $\bar{K} = \bar{\mathbb{Q}}$ its algebraic closure in $\mathbb{C}$. We expect however that Theorem \ref{thm:main} can be generalized to the transcendental case in the same way as Gao's by use of the Moriwaki height instead of the Weil height together with specialization arguments.

The purpose of this paper is twofold: First, we prove Conjecture \ref{conj:APZ} in Theorem \ref{thm:main} if $K$ is a number field and $\mathcal{V} = \mathcal{C}$ is a curve. Second, we investigate what happens when $\mathcal{C} \cap \mathcal{A}_{\Gamma}^{[k]}$ is infinite for some arbitrary $k \in \{0,\hdots,g\}$. Here, the case $k = g$ corresponds to Conjecture \ref{conj:APZ}. If $k < g$, the condition is weaker (if $k=0$, it is void), so we expect a weaker conclusion. We prove the strongest possible conclusion in Theorem \ref{thm:mainmain}, of which Theorem \ref{thm:main} thus becomes a special case.

The problem of intersecting a fixed subvariety with algebraic subgroups originates in works of Bombieri-Masser-Zannier \cite{MR1728021} and Zilber \cite{MR1875133} for powers of the multiplicative group. The analogous problem in a fixed abelian variety has also been the object of much study; we just mention the work of Habegger and Pila \cite{MR3552014}, from which we use several results in our proof. The intersection of a subvariety of a fixed abelian variety with translates of abelian subvarieties by points of a subgroup of finite rank has been studied by R\'{e}mond in e.g. \cite{MR2311666}. While there has been intensive study of unlikely intersections between a curve in an abelian scheme and flat algebraic subgroup schemes, culminating in the article by Barroero and Capuano \cite{BC18}, ours seems to be the first result that combines intersecting with positive-dimensional algebraic subgroups with an isogeny condition on the fiber.

We can now state our main results. Recall that $\mathcal{S}$ is a smooth irreducible curve and $\mathcal{A} \to \mathcal{S}$ is an abelian scheme, both defined over $K$, while $\mathcal{C} \subset \mathcal{A}$ is a closed irreducible curve, defined over $\bar{\mathbb{Q}}$, $A_0$ is an abelian variety defined over $\bar{\mathbb{Q}}$, $\gamma_1,\hdots,\gamma_r \in A_0(\bar{\mathbb{Q}})$ and $\Gamma \subset A_0$ is the subgroup of all $\gamma \in A_0$ such that $N\gamma \in \mathbb{Z}\gamma_1+\hdots+\mathbb{Z}\gamma_r$ for some $N \in \mathbb{N}$.

\begin{thm}\label{thm:mainmain}
Suppose that $\mathcal{A} \to \mathcal{S}$ is not isotrivial. If $\mathcal{A}_{\Gamma}^{[k]} \cap \mathcal{C}$ is infinite and $\pi(\mathcal{C})=\mathcal{S}$, then $\mathcal{C}$ is contained in an irreducible subvariety $\mathcal{W}$ of $\mathcal{A}$ of codimension $\geq k$ with the following property: Over $\overline{\bar{\mathbb{Q}}(\mathcal{S})}$, every irreducible component of $\mathcal{W}_\xi$ is a translate of an abelian subvariety of $\mathcal{A}_\xi$ by a point in $(\mathcal{A}_\xi)_{\tors} + \Tr\left(\mathcal{A}_\xi^{\overline{\bar{\mathbb{Q}}(\mathcal{S})}/\bar{\mathbb{Q}}}(\bar{\mathbb{Q}})\right)$.
\end{thm}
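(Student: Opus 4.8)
The plan is to follow the Pila--Zannier strategy, reducing everything to the setting of a universal family $\mathfrak{A}_{g,l}$ of principally polarized abelian varieties with level structure, where we may work with the associated period map and the Ax--Lindemann--Weierstrass-type results together with point-counting. First I would reduce to the case where $\mathcal{A}$ is a subvariety of some $\mathfrak{A}_{g,l}$, using Orr's theorem (as indicated in the introduction) to ensure that $\mathcal{S}$ is weakly special in $\mathbb{A}_{g,l}$, and I would replace $A_0$, $\Gamma$ and the isogenies so that all isogenies in play become polarized isogenies of bounded degree after a controlled modification; the point of the enlarged orbit $\mathcal{A}^{[k]}_\Gamma$ is that an isogeny $\psi:\mathcal{A}_s\to A_0$ sends $p$ into $\Gamma + B_0$ with $\operatorname{codim} B_0 \geq k$, which I would encode as a height/complexity condition on a pair $(\psi,\gamma)$ together with a linear-algebra condition that $\psi(p)-\gamma$ lies in a codimension-$\geq k$ abelian subvariety.

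The core of the argument is then a counting step. Using the uniformization of $\mathfrak{A}_{g,l}$ by a product of Siegel upper half spaces and vector spaces, I would lift $\mathcal{C}$ to a definable set in the sense of the o-minimal structure $\mathbb{R}_{\mathrm{an},\exp}$, and translate the condition ``$p\in\mathcal{C}\cap\mathcal{A}^{[k]}_\Gamma$'' into the statement that a certain definable set contains many rational (or algebraic of bounded degree) points of bounded height; here the heights of the relevant isogenies/points are controlled polynomially in terms of a Weil-height parameter on the base, using the Masser--W\"ustholz-type isogeny estimates and the arithmetic results from Habegger--Pila \cite{MR3552014} that the introduction flags as being used. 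Applying the Pila--Wilkie counting theorem then forces the existence of a positive-dimensional semialgebraic set inside the lift of $\mathcal{C}$ (times a parameter space), and the Ax--Lindemann--Weierstrass theorem for $\mathfrak{A}_{g,l}$ (in the mixed form needed to handle the extra $\Gamma$-directions, i.e. the version with the vector-group fibers) upgrades this to the statement that the Zariski closure of $\mathcal{C}\cap\mathcal{A}^{[k]}_\Gamma$ lies in a weakly special subvariety $\mathcal{W}$; the codimension bound $\operatorname{codim}\mathcal{W}\geq k$ comes from tracking the abelian subvariety $B_0$ through the isogeny and observing that the weakly special subvariety one produces fibers over $\mathcal{S}$ with fibers of dimension $\leq g-k$.

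To conclude, I would identify the geometric content of ``weakly special subvariety of $\mathfrak{A}_{g,l}$ dominating $\mathcal{S}$'' in the given family $\mathcal{A}\to\mathcal{S}$: by the structure theory of such subvarieties (and since the base curve is weakly special, using non-isotriviality so that the trace is proper), every irreducible component of $\mathcal{W}_\xi$ over $\overline{\bar{\mathbb{Q}}(\mathcal{S})}$ must be a coset of an abelian subvariety of $\mathcal{A}_\xi$ translated by a torsion point plus a point coming from the trace, which is exactly the asserted conclusion; Theorem \ref{thm:main} is then the special case $k=g$, where the codimension-$g$ bound forces $\mathcal{W}$ to be a curve and hence (dominating $\mathcal{S}$) essentially a section of the type described in Conjecture \ref{conj:APZ}(ii).

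I expect the main obstacle to be the counting/height step: one must show that every point of $\mathcal{C}\cap\mathcal{A}^{[k]}_\Gamma$ is witnessed by an isogeny and a $\Gamma$-element whose complexity is polynomially bounded in the height of the base point, and that the relevant ``codimension $\geq k$'' data can be captured by a single definable family of bounded complexity; this requires combining the polarized-isogeny normalization, the Masser--W\"ustholz isogeny theorem (to bound the degree of the isogeny in terms of the height), R\'emond-type height bounds for points in $\Gamma+B_0$, and a careful o-minimal definability check for the period map on a fundamental domain. A secondary technical point is handling the non-finitely-generated group $\Gamma$ and the abelian subvarieties $B_0$ simultaneously, which I would manage by fixing, up to finitely many cases, the abelian subvariety $B_0$ and the isogeny type, and applying the counting theorem uniformly over the resulting finite list.
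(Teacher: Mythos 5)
Your outline follows the same broad Pila--Zannier strategy as the paper (reduction to $\mathfrak{A}_{g,l}$, height/degree bounds for the isogeny and the $\Gamma$-data, definability of the uniformization on a fundamental domain, point counting, functional transcendence), but two steps as you describe them would not go through.

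First, the counting and functional-transcendence mechanism is misidentified. Pila--Wilkie cannot produce ``a positive-dimensional semialgebraic set inside the lift of $\mathcal{C}$'': the preimage $\exp^{-1}(\mathcal{C}(\mathbb{C}))$ of a curve contains no positive-dimensional semialgebraic subset unless $\mathcal{C}$ is already (weakly) special, and Ax--Lindemann--Weierstrass applied to such a block would prove far more than is true. What the paper actually does is build a definable set $Z$ whose coordinates include the \emph{rational parameters} (the integer matrix of the isogeny, the lattice vector, the coefficients $a_i$, $N$, and an integer matrix $H$ encoding the abelian subvariety via R\'emond's lemma) alongside the point $(\tau,x)$; the Habegger--Pila ``semirational'' counting theorem then yields a real-analytic curve in $Z$ whose projection to the rational coordinates is semialgebraic while the projection to $(\tau,x)$ is merely non-constant. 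From the defining equations of $Z$ one deduces only that the coordinate functions of the induced curve in $\exp^{-1}(\mathcal{C}(\mathbb{C}))$ have transcendence degree at most $g-k+1$, and the conclusion then comes from Gao's theorem of \emph{logarithmic Ax} type (about the Zariski closure of $\exp^{-1}$ of an algebraic curve), not from Ax--Lindemann. You also do not say where the ``many'' points come from: the paper takes the $d=[K(p):K]$ Galois conjugates of a single point of large degree, after showing via Northcott and the Faltings-height comparison that such degrees are unbounded.

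Second, your plan to handle the codimension-$\geq k$ condition by ``fixing, up to finitely many cases, the abelian subvariety $B_0$'' fails: there are in general infinitely many abelian subvarieties of a given codimension (already in $E\times E$ for $E$ with complex multiplication or not), so no finite list suffices. The paper instead proves (Proposition \ref{prop:galoisbounds}(ii), via Habegger--Pila's Proposition 9.1 and a reduction replacing $B_0$ by a subvariety $B_1\subset B_0$) that the \emph{degree} of the relevant abelian subvariety is polynomially bounded in $[K(p):K]$, and then uses Lemma \ref{lem:degreeabeliansubvariety} to encode it by an integer matrix of polynomially bounded height that enters the counting as part of the rational data. Relatedly, the reduction to ``polarized isogenies'' you propose is not available; the paper's substitute is the extension of Orr's result (Proposition \ref{prop:isogenyheightbounds}), which post-composes an isogeny of minimal degree with an automorphism of $A_0$ so that its rational representation has polynomially bounded height. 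These arithmetic bounds are the main new content of the proof and are missing from your proposal.
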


\begin{thm}\label{thm:main}
Suppose that $\mathcal{A} \to \mathcal{S}$ is not isotrivial. If $\mathcal{A}_{\Gamma} \cap \mathcal{C}$ is infinite, then one of the following two conditions is satisfied:
\begin{enumerate}[label=(\roman*)]
\item The curve $\mathcal{C}$ is a translate of an abelian subvariety of $\mathcal{A}_s$ by a point of $\mathcal{A}_{\Gamma} \cap \mathcal{A}_s$ for some $s \in \mathcal{S}$.
\item The zero-dimensional variety $\mathcal{C}_\xi$ is contained in $(\mathcal{A}_\xi)_{\tors} + \Tr\left(\mathcal{A}_\xi^{\overline{\bar{\mathbb{Q}}(\mathcal{S})}/\bar{\mathbb{Q}}}(\bar{\mathbb{Q}})\right)$.
\end{enumerate}
\end{thm}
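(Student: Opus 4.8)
My plan is to derive Theorem \ref{thm:main} from Theorem \ref{thm:mainmain} by taking $k=g$ there, after separating off the case in which $\mathcal{C}$ lies inside a single fibre. Since $\pi\colon\mathcal{A}\to\mathcal{S}$ is proper and $\mathcal{C}$ is a closed irreducible curve, $\pi(\mathcal{C})$ is a closed irreducible subset of the curve $\mathcal{S}$, hence either all of $\mathcal{S}$ or a single closed point $s$. I would treat these two cases separately; only the first uses non-isotriviality.

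\emph{Case $\pi(\mathcal{C})=\mathcal{S}$.} Here I would simply quote Theorem \ref{thm:mainmain} with $k=g$. Recalling that $\mathcal{A}_{\Gamma}=\mathcal{A}_{\Gamma}^{[g]}$, the hypotheses are met, so there is an irreducible subvariety $\mathcal{W}\supseteq\mathcal{C}$ of $\mathcal{A}$ of codimension $\geq g$. As $\dim\mathcal{A}=\dim\mathcal{S}+g=g+1$, this forces $\dim\mathcal{W}\leq 1$, and since $\mathcal{W}$ is irreducible and contains the curve $\mathcal{C}$ we must have $\mathcal{W}=\mathcal{C}$. Because $\mathcal{C}\to\mathcal{S}$ is then a dominant morphism of irreducible curves, $\mathcal{C}_\xi=\mathcal{W}_\xi$ is zero-dimensional. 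By the conclusion of Theorem \ref{thm:mainmain}, over $\overline{\bar{\mathbb{Q}}(\mathcal{S})}$ each irreducible component of $\mathcal{W}_\xi$ is a translate of an abelian subvariety of $\mathcal{A}_\xi$ by a point of $(\mathcal{A}_\xi)_{\tors}+\Tr\left(\mathcal{A}_\xi^{\overline{\bar{\mathbb{Q}}(\mathcal{S})}/\bar{\mathbb{Q}}}(\bar{\mathbb{Q}})\right)$; but a zero-dimensional component is a single point, forcing the abelian subvariety to be trivial and the point itself to lie in that set. Hence $\mathcal{C}_\xi$ is contained in $(\mathcal{A}_\xi)_{\tors}+\Tr\left(\mathcal{A}_\xi^{\overline{\bar{\mathbb{Q}}(\mathcal{S})}/\bar{\mathbb{Q}}}(\bar{\mathbb{Q}})\right)$, which is alternative (ii).

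\emph{Case $\pi(\mathcal{C})=\{s\}$.} Now $\mathcal{C}\subseteq\mathcal{A}_s$, the family is irrelevant, and $\mathcal{A}_{\Gamma}\cap\mathcal{C}=(\mathcal{A}_{\Gamma}\cap\mathcal{A}_s)\cap\mathcal{C}$. The key point I would establish is that $\mathcal{A}_{\Gamma}\cap\mathcal{A}_s$ is contained in a subgroup of $\mathcal{A}_s(\bar{\mathbb{Q}})$ of finite rank. Since this intersection is nonempty, fix an isogeny $\psi_0\colon\mathcal{A}_s\to A_0$; set $\Gamma_0=\mathbb{Z}\gamma_1+\cdots+\mathbb{Z}\gamma_r$ and let $\Gamma'=\{x\in A_0(\bar{\mathbb{Q}}) : \exists N\in\mathbb{N},\ Nx\in\End(A_0)\cdot\Gamma_0\}$, which has finite rank because $\End(A_0)$ is a finitely generated $\mathbb{Z}$-module and hence $\End(A_0)\cdot\Gamma_0$ is finitely generated. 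If $p\in\mathcal{A}_{\Gamma}\cap\mathcal{A}_s$, pick an isogeny $\psi\colon\mathcal{A}_s\to A_0$ with $\psi(p)\in\Gamma$ and an isogeny $\psi'\colon A_0\to\mathcal{A}_s$ with $\psi'\circ\psi=[\deg\psi]$; then $\mu=\psi_0\circ\psi'$ lies in $\End(A_0)$ and satisfies $\mu\circ\psi=(\deg\psi)\psi_0$, so a suitable positive multiple of $\psi_0(p)$ lies in $\mu(\Gamma_0)\subseteq\End(A_0)\cdot\Gamma_0$, whence $\psi_0(p)\in\Gamma'$. Thus $\mathcal{A}_{\Gamma}\cap\mathcal{A}_s\subseteq\psi_0^{-1}(\Gamma')$, which is a subgroup of finite rank since $\ker\psi_0$ is finite. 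As $\mathcal{C}\cap\psi_0^{-1}(\Gamma')$ is infinite, it is Zariski dense in the curve $\mathcal{C}$, so the Mordell--Lang conjecture (proven for abelian varieties by Vojta \cite{MR1109352}, Faltings \cite{MR1307396} and Hindry \cite{MR969244}, and in its most general form by McQuillan \cite{MR1323985}) shows that $\mathcal{C}$ is a translate $c+B$ of an abelian subvariety $B\subseteq\mathcal{A}_s$, necessarily of dimension $1$. Choosing any $c_0\in\mathcal{A}_{\Gamma}\cap\mathcal{C}$ we have $c_0-c\in B$, so $\mathcal{C}=c_0+B$ with $c_0\in\mathcal{A}_{\Gamma}\cap\mathcal{A}_s$, which is alternative (i).

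I do not anticipate a genuine obstacle in this deduction: the substance — the Pila--Zannier strategy, with its point-counting and height estimates — is all concentrated in Theorem \ref{thm:mainmain}, and Theorem \ref{thm:main} is essentially a formal consequence of it. Within the present reduction, the only mildly delicate point is the bookkeeping in the fibral case showing that enlarging the acting endomorphisms and passing to the divisible hull keeps $\mathcal{A}_{\Gamma}\cap\mathcal{A}_s$ of finite rank, so that the (finite-rank) Mordell--Lang theorem applies; that is where I would be most careful.
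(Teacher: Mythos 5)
Your proposal is correct and follows essentially the same route as the paper: the dominant case is handled by invoking Theorem \ref{thm:mainmain} with $k=g$ (the paper leaves the dimension count implicit, which you spell out correctly), and the fibral case is reduced to the Mordell--Lang theorem for a finite-rank subgroup of $\mathcal{A}_s$, which the paper does via Lemma \ref{lem:technicallemma} and an enlarged $\Gamma$ while you construct the finite-rank group $\psi_0^{-1}(\Gamma')$ directly — a cosmetic difference only.
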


From Theorem \ref{thm:main}, we can deduce the following corollary:

\begin{cor}\label{cor:dacorollary}
Let $A_{g,l}$ be the moduli space of principally polarized abelian varieties of dimension $g$ with orthogonal level $l$-structure as defined in Section \ref{sec:preliminaries} and $l$ sufficiently large and let $A$ and $B$ be abelian varieties with $\dim B = g$. Let $CÊ\subset A_{g,l} \times A$ be a closed irreducible curve and let $\pr_1: C \to A_{g,l}$ and $\pr_2: C \to A$ be the canonical projections. Let $\Gamma' \subset A$ be a subgroup of finite rank and let $\Sigma \subset A_{g,l}$ be the set of $sÊ\in A_{g,l}$ corresponding to abelian varieties that are isogenous to $B$. If $C \cap (\Sigma \times \Gamma')$ is infinite, then either $\pr_1$ or $\pr_2$ is constant.
\end{cor}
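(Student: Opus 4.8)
The plan is to deduce this from Theorem~\ref{thm:main}, applied to an auxiliary non-isotrivial abelian scheme over a curve that combines the universal family over $A_{g,l}$ with the constant family having fibre $A$. If $\pr_1$ is constant we are done, so assume it is not. Then $\mathcal{S}_0 := \overline{\pr_1(C)}$ is an irreducible curve in $A_{g,l}$ defined over a number field; letting $\mathcal{S}$ be its normalization, a smooth irreducible curve over a number field, I would form the abelian scheme
\[ \mathcal{A} := \left(\mathfrak{A}_{g,l} \times_{A_{g,l}} \mathcal{S}\right) \times_{\mathcal{S}} \left(\mathcal{S} \times A\right) \longrightarrow \mathcal{S}, \]
of relative dimension $g+\dim A$, whose fibre over $\sigma \in \mathcal{S}$ lying over $s \in A_{g,l}$ is $(\mathfrak{A}_{g,l})_s \times A$. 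I would first check that $\mathcal{A} \to \mathcal{S}$ is not isotrivial: since the $\overline{\bar{\mathbb{Q}}(\mathcal{S})}/\bar{\mathbb{Q}}$-trace is compatible with products and the trace of the constant factor $A$ is $A$ itself, isotriviality of $\mathcal{A}$ would force isotriviality of $\mathfrak{A}_{g,l} \times_{A_{g,l}} \mathcal{S}$; but then all of its geometric fibres would be isomorphic, so $\mathcal{S}_0$ would have finite image under the finite morphism from $A_{g,l}$ to the coarse moduli space of principally polarized abelian varieties of dimension $g$, contradicting that $\mathcal{S}_0$ is a curve.

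Next I would transport $C$ into $\mathcal{A}$. Let $\tilde{C}$ be the normalization of $C$; the non-constant morphism $\tilde{C} \to C \xrightarrow{\pr_1} A_{g,l}$ factors through a dominant morphism $\tilde{C} \to \mathcal{S}$, and I would define a morphism $\tilde{C} \to \mathcal{A}$ by sending $\tilde{c} \in \tilde{C}$, lying over $\sigma \in \mathcal{S}$ and hence over $s \in A_{g,l}$, to the point of $\mathcal{A}_\sigma = (\mathfrak{A}_{g,l})_s \times A$ whose first coordinate is the origin and whose second coordinate is $\pr_2(\tilde c)$. Let $\mathcal{C} \subset \mathcal{A}$ be the Zariski closure of the image, a closed irreducible curve defined over $\bar{\mathbb{Q}}$ whose projection to $\mathcal{S}$ is dense. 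Now set $A_0 := B \times A$, of dimension $g+\dim A$, choose $\mathbb{Z}$-linearly independent $\gamma_1',\dots,\gamma_r' \in \Gamma'$ such that $\Gamma'$ is contained in the finite-rank group $\Gamma'' := \{\alpha \in A; \, N\alpha \in \mathbb{Z}\gamma_1'+\cdots+\mathbb{Z}\gamma_r' \text{ for some } N \in \mathbb{N}\}$, and put $\gamma_i := (0,\gamma_i') \in A_0$, so that the associated finite-rank subgroup of $A_0$ is $\Gamma = B_{\tors} \times \Gamma''$. For each of the infinitely many points $(s,a) \in C \cap (\Sigma \times \Gamma')$, pick a preimage $\tilde{c} \in \tilde{C}$ with image $\sigma \in \mathcal{S}$; since $s \in \Sigma$, the fibre $(\mathfrak{A}_{g,l})_s$ is isogenous to $B$, so any isogeny $\phi_B \colon B \to (\mathfrak{A}_{g,l})_s$ gives an isogeny $\phi := \phi_B \times \id_A \colon A_0 \to \mathcal{A}_\sigma$ with $\phi(0,a)$ equal to the image of $\tilde{c}$ in $\mathcal{C}$, while $(0,a) \in B_{\tors} \times \Gamma'' = \Gamma$ because $a \in \Gamma' \subseteq \Gamma''$. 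Distinct $(s,a)$ yield distinct such points, so $\mathcal{C} \cap \mathcal{A}_\Gamma$ is infinite and Theorem~\ref{thm:main} applies.

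In case~(i) of Theorem~\ref{thm:main} the curve $\mathcal{C}$ would lie in a single fibre $\mathcal{A}_\sigma$, contradicting that its projection to the curve $\mathcal{S}$ is dense; hence case~(ii) holds. Writing $L := \overline{\bar{\mathbb{Q}}(\mathcal{S})}$, and using once more that the trace is compatible with products and that the $L/\bar{\mathbb{Q}}$-trace of the constant factor $A$ is $A$ with the identity as trace morphism, the projection of $(\mathcal{A}_\xi)_{\tors} + \Tr(\mathcal{A}_\xi^{L/\bar{\mathbb{Q}}}(\bar{\mathbb{Q}}))$ to the $A$-factor of $\mathcal{A}_\xi$ equals $A_{\tors} + A(\bar{\mathbb{Q}}) = A(\bar{\mathbb{Q}})$. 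On the other hand, the $A$-coordinate of the point of $\mathcal{C}_\xi$ over $L$ cut out by the generic point $\eta$ of $\tilde{C}$ is $\pr_2(\eta)$; so case~(ii) forces $\pr_2(\eta) \in A(\bar{\mathbb{Q}})$, i.e. $\pr_2 \colon C \to A$ is constant. The substantive steps are the non-isotriviality check and, at the end, the point that case~(ii) of Theorem~\ref{thm:main} — which a priori only constrains the $A$-coordinate of $\mathcal{C}_\xi$ to lie in the trace term — actually pins it down to $A(\bar{\mathbb{Q}})$ because the factor $A$ of $\mathcal{A}$ is constant; verifying the remaining hypotheses of Theorem~\ref{thm:main} is routine.
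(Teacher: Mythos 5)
Your proof is correct and follows essentially the same route as the paper: the same auxiliary scheme $\mathcal{A}=\left(\mathfrak{A}_{g,l}\times_{A_{g,l}}\mathcal{S}\right)\times_{\mathcal{S}}(\mathcal{S}\times A)$ with $A_0=B\times A$ and $\Gamma=B_{\tors}\times\Gamma''$, the same embedding of $C$ via the zero section of the first factor, and the same application of Theorem \ref{thm:main} with case (i) excluded by dominance over $\mathcal{S}$. The only differences are cosmetic (normalization versus smooth locus of $\pr_1(C)$) or expository: you spell out the non-isotriviality check and the product decomposition of the trace in case (ii), both of which the paper asserts without detail.
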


We thereby prove Conjecture 1.7 of Buium and Poonen in \cite{MR2507742}: If $S$ is a modular curve or a Shimura curve, then a Zariski open subset $S'$ of $S$ has a moduli interpretation which yields a quasi-finite forgetful modular morphism from $S'$ to the coarse moduli space $A_g$ of principally polarized abelian varieties of dimension $g \in \{1,2\}$. Similarly, we have a quasi-finite morphism $A_{g,l} \to A_g$. We can then form the curve $S' \times_{A_g} A_{g,l}$, which admits quasi-finite morphisms to $S'$ and $A_{g,l}$, and reduce the conjecture to Corollary \ref{cor:dacorollary}. The conjecture of Buium and Poonen has been proven independently by Baldi in \cite{B18} through the use of equidistribution results. He was also able to replace $\Gamma'$ by a fattening $\Gamma'_{\epsilon}$ for some $\epsilon > 0$ (see \cite{B18} for the definition of $\Gamma'_{\epsilon}$). Such an extension seems to lie outside the reach of our methods though.

The proof of Theorem \ref{thm:mainmain} uses point counting and o-minimality and in particular a later refinement of the theorem of Pila-Wilkie on rational points on definable sets in \cite{MR2228464}. In applying this result to problems of unlikely intersections in diophantine geometry, we follow the standard strategy as devised by Zannier for the new proof of the Manin-Mumford conjecture by Pila and him in \cite{MR2411018}. It is described in Zannier's book \cite{MR2918151}. In Section \ref{sec:preliminaries}, we introduce some notation and make several reduction steps.

In Sections \ref{sec:heightbounds} and \ref{sec:galoisorbitbounds}, we bound the ``height'' of all important quantities from above in terms of the degree of the varying point $p = \phi(q) \in \mathcal{A}^{[k]}_{\Gamma} \cap \mathcal{C}$ over the fixed number field $K$. The main new ideas of this article are to be found in these two sections. In order to treat non-polarized isogenies, we extend a result by Orr to show that the isogeny $\phi$ between $A_0$ and $\mathcal{A}_s$ can be chosen such that certain associated quantities are bounded in the required way -- first of all, we apply the isogeny theorem of Masser-W\"ustholz to show that the degree of the isogeny can be bounded in this way. As a consequence of our extension of Orr's result we can then bound the height of $q$ for this choice of $\phi$. (After maybe enlarging $\Gamma$, we can fix for each $s \in \mathcal{S}$ such that $A_0$ and $\mathcal{A}_s$ are isogenous one choice of isogeny -- see Lemma \ref{lem:technicallemma}.)

We bound the degree of the smallest translate of an abelian subvariety of $A_0^{r+1}$ by a torsion point that contains $(q,\gamma_1,\hdots,\gamma_r)$ through an application of a proposition by Habegger and Pila. Using this and a lemma of R\'emond, we can then write $q = \gamma + b$ with $\gamma \in \Gamma$ of controlled height and $b$ in an abelian subvariety of controlled codimension and degree. If $N$ is the smallest natural number such that $N\gamma \in \bigoplus_{i=1}^{r} \mathbb{Z}\gamma_i$, we finally bound $N$ by applying a lemma of Habegger and Pila, some elementary diophantine approximation and lower height bounds on abelian varieties due to Masser.

In Section \ref{sec:ominimality}, we give a brief introduction to o-minimal structures in as much depth as is necessary to state a variant of the Pila-Wilkie theorem, due to Habegger and Pila,  on ``semirational" points of bounded height.

In Section \ref{sec:uniformization}, the definability in a suitable o-minimal structure of the analytic uniformization map associated to our abelian scheme is shown, when restricted to some fundamental domain, by use of a theorem of Peterzil-Starchenko. In Section \ref{sec:functionaltranscendence}, we record the necessary algebraic independence result of ``logarithmic Ax'' type by Gao, which generalizes work by Andr\'{e} in \cite{MR1154159} and by Bertrand in \cite{MR2649338}.

Finally, we put all the pieces together in Section \ref{sec:mainproof} and prove Theorem \ref{thm:mainmain}, Theorem \ref{thm:main} and Corollary \ref{cor:dacorollary}.

\section{Preliminaries and Notation}\label{sec:preliminaries}
For a rational number $\alpha=\frac{a}{b}$ with $a \in \mathbb{Z}$, $b \in \mathbb{N}$ and $\gcd(a,b)=1$, we define its affine height $H(\alpha)=\max\{|a|,|b|\}$. We will fix once and for all a square root of $-1$ inside $\mathbb{C}$ that we denote by $\sqrt{-1}$ -- this yields maps $\Re: \mathbb{C} \to \mathbb{R}$ and $\Im: \mathbb{C} \toÊ\mathbb{R}$ in the usual way. For an integral domain $R$, we denote the space of $m\times n$-matrices with entries in $R$ by $\M_{m\times n}(R)$. We write $\M_n(R)$ for $\M_{n \times n}(R)$. For a matrix $A=(a_{ij}) \in \M_{n}(\mathbb{Q})$, we define its height $H(A)=\max_{i,j}H(a_{ij})$. The complex conjugate of a matrix $A$ with complex entries will be denoted by $\overline{A}$ and the transpose by $A^{t}$. The $n$-dimensional identity matrix will be denoted by $E_n$. The row-sum norm of a matrix $A \in \M_{m\times n}(\mathbb{C})$ will be denoted by $\lVert A \rVert$. For a vector $v=(v_1,\hdots,v_n)^t \in \mathbb{C}^n$, we will write $\lVert v \rVert$ for $\max_{j=1,\hdots,n} |v_j|$. Note that
\[ \lVert A \rVert = \max_{vÊ\neq 0}{\frac{\lVert Av\rVert}{\lVert v \rVert}}Ê\]
for all $A \in \M_{m \times n}(\mathbb{C})$. Vectors will always be column vectors. By applying $\Re$ and $\Im$ to each entry, we obtain maps from $\M_n(\mathbb{C})$ to $\M_n(\mathbb{R})$ that by abuse of language will also be called $\Re$ and $\Im$.

If $A$ is an arbitrary abelian variety over an arbitrary field, we denote its dual abelian variety by $\hat{A}$. If $\phi: A \to B$ is an isogeny, the dual isogeny will be denoted by $\hat{\phi}: \hat{B} \to \hat{A}$.

For our proof of Theorem \ref{thm:main}, we will restrict ourselves in the following sections to subfamilies of the universal family $\mathfrak{A}_{g,l} \to A_{g,l}$ of principally polarized abelian varieties with so-called orthogonal level $l$-structure for a natural number $l \geq 16$ which is divisible by $8$ and a perfect square and identify $\pi$ and $\epsilon$ with the natural projection and zero section of that family. If $\mathbb{H}_g$ denotes the Siegel upper half space in dimension $g$ (i.e. symmetric matrices in $\M_g(\mathbb{C})$ with positive definite imaginary part), then $\mathfrak{A}_{g,l}$ is a quotient of $\mathbb{H}_g \times \mathbb{C}^g$ by the semidirect product of the congruence subgroup
\begin{align*}
G(l,2l) = \left\{ M = \begin{pmatrix} A & B \\ C & D \end{pmatrix} \in \Sp_{2g}(\mathbb{Z})\mbox{; } M \equiv E_{2g} \mod l, \right. \\ \left. \diag(A B^t) \equiv \diag(C D^t) \equiv 0 \mod 2l\right\}
\end{align*}
of $\Sp_{2g}(\mathbb{Z})$ with $\mathbb{Z}^{2g}$, where $\diag$ denotes the diagonal of a matrix. We will show at the end of our work in Section \ref{sec:mainproof} how to deduce the result for arbitrary families.

The group $G(l,2l)$ is the same as the group $\Gamma(l,2l)$ defined on p. 422 of \cite{MR1207211}. Let $G_{lE_g}(lE_g)_{0}$ be defined as in Section 8.9 of \cite{MR2062673}. Then there is an isomorphism from $G(l,2l)$ to $G_{lE_g}(lE_g)_{0}$ given by sending $M$ to $\begin{pmatrix} E_g & 0 \\ 0 & l^{-1}E_g\end{pmatrix}M \begin{pmatrix} E_g & 0 \\ 0 & lE_g\end{pmatrix}$ -- see \cite{MR2062673}, Section 8.8 and 8.9, and note that $l$ is even.

The group law on the semidirect product is given by $(M',z')(M,z)=(M'M,z'+(M')^{-t}z)$ and the action of the group is given by
\[ \left(\begin{pmatrix} A & B \\ C & D\end{pmatrix},\begin{pmatrix} m \\ n\end{pmatrix}\right)(\tau,z) = (\tau',(C\tau+D)^{-t}z+\tau'm+n),\]
where
\[ \tau' = \begin{pmatrix} A & B \\ C & D\end{pmatrix}[\tau] := (A\tau+B)(C\tau+D)^{-1}.\]
The action of course extends to an action of $\Sp_{2g}(\mathbb{R}) \ltimes \mathbb{R}^{2g}$ (with the same group law) and then also restricts to the usual action of $\Sp_{2g}(\mathbb{R})$ on $\mathbb{H}_g$. If $M \in \Sp_{2g}(\mathbb{R})$ and $\tau \in \mathbb{H}_g$, we will denote this last action as above by $M[\tau]$ to avoid confusion with ordinary matrix multiplication.

By applying Proposition 8.2.5 in \cite{MR2062673} and Cartan's Expos\'{e} 11 in Volume 2 of \cite{MR0103988}, we see that our universal family is a complex analytic space because the group action is proper and discontinuous -- Proposition 8.2.5 of \cite{MR2062673} only says that the action of $G(l,2l)$ on $\mathbb{H}_g$ is proper and discontinuous, but this quickly implies the same for the action of its semidirect product with $\mathbb{Z}^{2g}$ on $\mathbb{H}_gÊ\times \mathbb{C}^g$. However, the universal family is in fact a quasi-projective variety, defined over $\mathbb{Q}$. In the following proposition, we recall some well-known facts about it.

\begin{prop}\label{prop:universalfamily}
There exist holomorphic maps
\[\exp: \mathbb{H}_g \times \mathbb{C}^g  \to \mathbb{P}^{l^g-1}(\mathbb{C}) \times \mathbb{P}^{l^g-1}(\mathbb{C})\]
and $\iota: \mathbb{H}_g \to \mathbb{P}^{l^g-1}(\mathbb{C})$ with the following properties:
\begin{enumerate}[label=(\roman*)]
\item There is a commutative diagram
\[
\begin{tikzcd}
\mathbb{H}_g \times \mathbb{C}^g \arrow{r}{\exp} \arrow[swap]{d}{} & \arrow{d}{} \mathbb{P}^{l^g-1}(\mathbb{C}) \times \mathbb{P}^{l^g-1}(\mathbb{C}) \\
\mathbb{H}_g \arrow{r}{\iota} & \mathbb{P}^{l^g-1}(\mathbb{C})
\end{tikzcd},
\]
where the vertical maps are projections to the first factor.
\item We have $\exp(\tau,z) = \exp(\tau',z')$ if and only if $(\tau,z)$, $(\tau',z')$ lie in the same $G(l,2l) \ltimes \mathbb{Z}^{2g}$-orbit and $\exp$ descends to an analytic embedding of the quotient. Similarly, we have $\iota(\tau) = \iota(\tau')$ if and only if $\tau$, $\tau'$ lie in the same $G(l,2l)$-orbit and $\iota$ descends to an analytic embedding of the quotient.
\item The images $\exp(\mathbb{H}_g \times \mathbb{C}^g)$ and $\iota(\mathbb{H}_g)$ are locally closed with respect to the Zariski topology in $\mathbb{P}^{l^g-1}(\mathbb{C})Ê\times \mathbb{P}^{l^g-1}(\mathbb{C})$ and $\mathbb{P}^{l^g-1}(\mathbb{C})$ respectively. They are irreducible smooth varieties, defined over $\mathbb{Q}$.
\item $\exp(\mathbb{H}_g \times \mathbb{C}^g) \to \iota(\mathbb{H}_g)$ is an abelian scheme, defined over $\mathbb{Q}$, with zero section $p \mapsto (p,p)$.
\item $\exp(\tau,\cdot)$ is a surjective group homomorphism from $\mathbb{C}^g$ to $\exp(\{\tau\} \times \mathbb{C}^g)$ with kernel $\Omega_\tau \mathbb{Z}^{2g}$, where
\[ \Omega_\tau = \begin{pmatrix}\tau & E_g\end{pmatrix}.\]
\item The very ample line bundle on $\exp(\{\tau\} \times \mathbb{C}^g)$ that is induced by this embedding is the $l$-th tensor power of a symmetric ample line bundle. Under the uniformization $\exp(\{\tau\} \times \mathbb{C}^g) \simeq \mathbb{C}^g/\Omega_{\tau}\mathbb{Z}^{2g}$ given by $\exp$, the Hermitian form on $\mathbb{C}^g$ induced by this second line bundle is given by the matrix $(\Im \tau)^{-1}$.
\end{enumerate}
\end{prop}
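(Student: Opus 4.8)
The plan is to write $\exp$ and $\iota$ down explicitly in terms of theta functions with rational characteristics and then to read off each of the six assertions from the classical theory of theta functions together with Mumford's construction of $A_{g,l}$ and of its universal family; no new idea is involved, and the whole argument amounts to assembling statements from \cite{MR2062673}, \cite{MR1207211} and \cite{MR0103988}.

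First I would fix the construction. For $\tau \in \mathbb{H}_g$ set $X_\tau = \mathbb{C}^g/\Omega_\tau\mathbb{Z}^{2g}$ and equip it with the symmetric ample line bundle $L_\tau$ of characteristic $0$ whose associated Hermitian form is $(\Im\tau)^{-1}$; this $L_\tau$ is a principal polarization, so $L_\tau^{\otimes l}$ (which is totally symmetric since $l$ is even) has $l^g$ independent global sections, a canonical basis of which is furnished by the level-$l$ theta functions $\vartheta_a(\tau,\cdot)$, $a \in (\mathbb{Z}/l\mathbb{Z})^g$, and these depend holomorphically on $(\tau,z) \in \mathbb{H}_g\times\mathbb{C}^g$. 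I would then put
\[
\exp(\tau,z) = \bigl((\vartheta_a(\tau,0))_a,\ (\vartheta_a(\tau,z))_a\bigr) \in \mathbb{P}^{l^g-1}(\mathbb{C})\times\mathbb{P}^{l^g-1}(\mathbb{C})
\]
and $\iota(\tau) = (\vartheta_a(\tau,0))_a$. Then (i) holds by construction; (v) records that, for fixed $\tau$, the second component of $\exp(\tau,\cdot)$ is the morphism $\mathbb{C}^g \twoheadrightarrow X_\tau \hookrightarrow \mathbb{P}^{l^g-1}(\mathbb{C})$ attached to the very ample bundle $L_\tau^{\otimes l}$, which is a homomorphism onto its image with kernel $\Omega_\tau\mathbb{Z}^{2g}$; (vi) is just the normalization imposed on $L_\tau$, once one observes that the embedding cut out by $(\vartheta_a(\tau,\cdot))_a$ is the one attached to $L_\tau^{\otimes l}$; and the zero section $p\mapsto(p,p)$ is the map $\iota(\tau)\mapsto\exp(\tau,0)$, since $\exp(\tau,0)=(\iota(\tau),\iota(\tau))$.

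The substantial point is (ii), and I expect it to be the only genuinely delicate one. I would invoke the transformation formula for the $\vartheta_a$ under $\Sp_{2g}(\mathbb{Z})\ltimes\mathbb{Z}^{2g}$: the congruence and orthogonality conditions defining $G(l,2l)$ --- this is where the arithmetic hypotheses on $l$ enter, since they force the automorphy factors of the $\vartheta_a$ to restrict trivially to $G(l,2l)\ltimes\mathbb{Z}^{2g}$ --- ensure that $G(l,2l)\ltimes\mathbb{Z}^{2g}$ fixes $\exp(\tau,z)$ in $\mathbb{P}^{l^g-1}(\mathbb{C})\times\mathbb{P}^{l^g-1}(\mathbb{C})$, whereas an element outside this subgroup moves it; this is Mumford's theory of orthogonal level structures, worked out in \cite{MR2062673}, Sections 8.5--8.9, and \cite{MR1207211}. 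That $\exp$ and $\iota$ then descend to \emph{injective} maps on the respective quotients amounts to the identification of $(\mathbb{H}_g\times\mathbb{C}^g)/(G(l,2l)\ltimes\mathbb{Z}^{2g})$ with the universal family $\mathfrak{A}_{g,l}$ over the fine moduli space $A_{g,l} = \mathbb{H}_g/G(l,2l)$ of principally polarized abelian varieties of dimension $g$ with orthogonal level $l$-structure; and that the descended maps are immersions onto locally closed images follows because the theta map realizes $\mathfrak{A}_{g,l}$ (resp.\ $A_{g,l}$) as a locally closed subvariety of the relevant projective space --- again part of Mumford's construction, \cite{MR2062673}, Section 8.9 --- while $G(l,2l)\ltimes\mathbb{Z}^{2g}$ acts freely on $\mathbb{H}_g\times\mathbb{C}^g$ (resp.\ $G(l,2l)$ on $\mathbb{H}_g$), so that the quotient maps are local biholomorphisms. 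The delicate ingredient is the bookkeeping, namely matching the explicit automorphy factors of the $\vartheta_a$ against the abstract description of $G(l,2l)$; this is all carried out in the references.

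Finally, for (iii) and (iv) I would cite that $A_{g,l}$ and $\mathfrak{A}_{g,l}$ are quasi-projective schemes over $\mathbb{Q}$ --- in fact over $\mathbb{Z}[1/l]$ --- with $\mathfrak{A}_{g,l}\to A_{g,l}$ an abelian scheme (\cite{MR2062673}, Section 8.9, and the references there). The images $\exp(\mathbb{H}_g\times\mathbb{C}^g)$ and $\iota(\mathbb{H}_g)$ are precisely the sets of complex points of these varieties in the projective embeddings fixed above, so they are locally closed for the Zariski topology and defined over $\mathbb{Q}$; together with the zero section already identified above, this gives (iv). They are irreducible and smooth because $\mathbb{H}_g\times\mathbb{C}^g$ and $\mathbb{H}_g$ are connected complex manifolds on which $G(l,2l)\ltimes\mathbb{Z}^{2g}$, resp.\ $G(l,2l)$, acts freely --- $G(l,2l)$ is contained in the principal congruence subgroup of level $l$ of $\Sp_{2g}(\mathbb{Z})$, hence torsion-free, and the translation action of $\mathbb{Z}^{2g}$ on $\mathbb{C}^g$ is obviously free --- so the quotient maps are local biholomorphisms and the quotients inherit connectedness and smoothness; that these quotients are complex manifolds in the first place has already been recorded just above the statement, via Proposition 8.2.5 of \cite{MR2062673} and Cartan's Expos\'{e} 11 in \cite{MR0103988}.
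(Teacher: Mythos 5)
Your proposal is correct and follows essentially the same route as the paper: both define $\exp$ and $\iota$ explicitly via theta functions with rational characteristics and delegate the descent, embedding, algebraicity and $\mathbb{Q}$-rationality statements to Birkenhake--Lange, Mumford and Masser--W\"ustholz, with smoothness and irreducibility coming from freeness of the group actions. The only point where the paper is more explicit than you are is the ``bookkeeping'' you defer to the references: it rescales via $(\tau,z)\mapsto(l\tau,lz)$ and uses the conjugation identifying $G(l,2l)$ with $G_{lE_g}(lE_g)_{0}$ to match the automorphy factors, and for (vi) it computes the factor of automorphy directly to identify the Hermitian form $(\Im\tau)^{-1}$.
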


\begin{proof}
We can explicitly give the maps, using the classical theta functions. For this, we define
\[ \theta[a,b](\tau,z) = \sum_{m \in \mathbb{Z}^g}{\exp(\pi \sqrt{-1} (m+a)^t \tau (m+a)+2\pi \sqrt{-1} (m+a)^t (z+b))} \]
for $\tau \in \mathbb{H}_g$, $z \in \mathbb{C}^g$ and $a,b \in \mathbb{Q}^g$. For $c \in \mathbb{Q}^g$ and $(\tau,z) \in \mathbb{H}_g \times \mathbb{C}^g$, we put
\[ \theta_{c}(\tau,z) = \theta[c,0](\tau,z).\]

We then define
\[ \phi(\tau,z) = [\theta_{c_0}(\tau,z) : \hdots : \theta_{c_{l^g-1}}(\tau,z)]\]
and $\iota(\tau) = \phi(l\tau,0)$ as well as
\[ \exp(\tau,z) = (\phi(l\tau,0),\phi(l\tau,lz)),\]
where the $c_i$ run over the set $\{0,\frac{1}{l},\hdots,1-\frac{1}{l}\}^{g}$ ($i=0,\hdots,l^g-1$).

Property (i) now follows directly from the definitions. For property (ii), we refer to Chapter 8 of \cite{MR2062673} and Chapter V of \cite{MR0325625}. Note that due to the above-mentioned isomorphism between $G_{lE_g}(lE_g)_{0}$ and $G(l,2l)$ $\tau$ and $\tau'$ lie in the same $G(l,2l)$-orbit if and only if $l\tau$ and $l\tau'$ lie in the same $G_{lE_g}(lE_g)_{0}$-orbit. It follows from \cite{MR1580111}, \S 1, that the actions are free, so the quotient maps are covering maps. The map $\iota$ descends to a proper map from the quotient to its image, since over every point of its image lies exactly one point of the normalization of the closure of the image by \cite{MR610482}. One can use Theorem 4.5.1 of \cite{MR2062673} to show that not only $\iota$, but also $\exp$ descends to an analytic embedding.

For properties (iii) and (iv), see Section 3 of \cite{MR1207211} and the references given there, in particular \cite{MR0219541}. Smoothness and irreducibility follow from the fact that the quotients are connected complex analytic manifolds. Property (v) follows from (ii) and the choice of zero section.

Property (vi) follows by computing the factor of automorphy of the embedding $\exp(\tau,\cdot)$ of $\mathbb{C}^g/\Omega_\tau \mathbb{Z}^{2g}$: An elementary computation shows that
\[\theta_c(l\tau,l(z+m+\tau n)) = \exp(-\pi \sqrt{-1}ln^t\tau n-2\pi \sqrt{-1}ln^tz)\theta_c(l\tau,lz)\]
for all $c \in \{0,\frac{1}{l},\hdots,1-\frac{1}{l}\}^g$ and all $m,n \in \mathbb{Z}^g$. By Remark 8.5.3(d) in \cite{MR2062673}, this factor of automorphy belongs to the $l$-th tensor power of a symmetric ample line bundle that under the given uniformization is associated to the Hermitian form given by $(\Im \tau)^{-1}$ on $\mathbb{C}^g$.
\end{proof}

Using the proposition, we may identify $\exp(\mathbb{H}_g \times \mathbb{C}^g)$ and $\iota(\mathbb{H}_g)$ with $\mathfrak{A}_{g,l}(\mathbb{C})$ and $A_{g,l}(\mathbb{C})$ and use $\mathfrak{A}_{g,l}$ and $A_{g,l}$ for the corresponding quasiprojective varieties, defined over $\mathbb{Q}$. We will denote the Zariski closures in $\mathbb{P}^{l^g-1}$ and $\mathbb{P}^{l^g-1}\times\mathbb{P}^{l^g-1}$ of these varieties by $\overline{\mathfrak{A}_{g,l}}$ and $\overline{A_{g,l}}$ respectively; these are (usually highly singular) projective varieties, also defined over $\mathbb{Q}$. The projection from $\mathbb{P}^{l^g-1}\times\mathbb{P}^{l^g-1}$ onto the first factor yields a morphism $\pi: \overline{\mathfrak{A}_{g,l}} \to \overline{A_{g,l}}$. The embedding from the proposition yields very ample line bundles $\mathcal{L}$ on $\overline{\mathfrak{A}_{g,l}}$ and $L$ on $\overline{A_{g,l}}$.

From now on, we assume that $\mathcal{S} \subset A_{g,l}$ is an irreducible, smooth, locally closed curve (not necessarily closed in $A_{g,l}$), $\mathcal{A} = \pi^{-1}(\mathcal{S})$ and $\mathcal{C} \subset \mathcal{A}$ is an irreducible closed curve. We denote by $\overline{\mathcal{C}}$ and $\overline{\mathcal{S}}$ the Zariski closures of $\mathcal{C}$ and $\mathcal{S}$ in $\overline{\mathfrak{A}_{g,l}}$ and $\overline{A_{g,l}}$ respectively. The abelian scheme $\mathcal{A}Ê\to \mathcal{S}$ and the curve $\overline{\mathcal{S}}$ are defined over $K$.

After maybe enlarging $K$, we can and will assume without loss of generality that $A_0$, the addition morphism $A_0 \times A_0 \to A_0$, the inversion morphism $A_0 \to A_0$, $\mathcal{C}$ and $\overline{\mathcal{C}}$ are defined over $K$ and that $A_0$ is principally polarized. For this, we might have to replace $A_0$ by an isogenous abelian variety and $\Gamma$ by its pre-image under the corresponding isogeny. This doesn't change the isogeny orbit, so doesn't change the statement we want to prove.

We fix a symmetric ample line bundle $L_0$ which gives us a principal polarization on $A_0$ and fix once and for all a uniformization $\mathbb{C}^g/\Omega_{\tau_0}\mathbb{Z}^{2g}$ of $A_0(\mathbb{C})$ such that the Hermitian form on $\mathbb{C}^g$ associated to $L_0$ is given by $(\Im \tau_0)^{-1}$, $\Omega_{\tau_0} = \left(\begin{smallmatrix} \tau_0 & E_g\end{smallmatrix}\right)$ and $\tau_0$ lies in the Siegel fundamental domain (see Definition \ref{defn:siegelfundamentalset}). We denote the corresponding map $\mathbb{C}^g \to A_0(\mathbb{C})$ by $\exp_0$. Using Weil's Height Machine (see \cite{MR1745599}, Theorem B.3.2 and B.3.6), we also get a (logarithmic projective) height $h_{A_0}=h_{A_0,L_0}$ on $A_0$. With the usual construction due to N\'{e}ron and Tate (see \cite{MR1745599}, Theorem B.5.1) we then obtain a canonical height $\widehat{h}_{A_0}$ on $A_0$.

After maybe enlarging $K$ again, we can assume that $L_0$ is defined over $K$, $\gamma_1, \hdots, \gamma_r \in A_0(K)$, and every endomorphism of $A_0$ is defined over $K$. Since the endomorphism ring of $A_0$ is finitely generated as a $\mathbb{Z}$-module, we may assume that $\Gamma$ is mapped into itself by every endomorphism of $A_0$ by enlarging $\Gamma$ if necessary (which only makes Theorems \ref{thm:mainmain} and \ref{thm:main} stronger). We will generally assume that $r \geq 1$ for simplicity -- one can either ensure this by enlarging $\Gamma$ and $K$ or one can check that our proof also works mutatis mutandis if $r=0$.

The line bundle $L$ restricts to a very ample line bundle $L_{\overline{\mathcal{S}}}$ on $\overline{\mathcal{S}}$. For each $s \in \mathcal{S}$, the restriction of $\mathcal{L}$ to $\mathcal{A}_s$ is a very ample symmetric line bundle $\mathcal{L}_s$ by Proposition \ref{prop:universalfamily}(vi). From the embeddings into projective space by theta functions, we directly obtain associated heights $h_{\overline{\mathcal{S}}}$ on $\overline{\mathcal{S}}$ and $h_s$ on $\mathcal{A}_s$ ($s \in \mathcal{S}$) as well as a canonical height $\widehat{h}_s$ on $\mathcal{A}_s$.

The following technical lemma shows that for each $sÊ\in \mathcal{S}$ we can fix an isogeny $\phi_s$ in the definition of $\mathcal{A}_{\Gamma}^{[k]}$.

\begin{lem}\label{lem:technicallemma}
For each $s \in \mathcal{S}$ such that $\mathcal{A}_{s}$ and $A_0$ are isogenous, fix an isogeny $\phi_s: A_0 \to \mathcal{A}_s$. For $\Gamma$ as above, we have
\begin{multline}
\mathcal{A}_{\Gamma}^{[k]} = \{ p \in \mathcal{A}_s; \mbox{ } s \in \mathcal{S}, \mbox{$\mathcal{A}_s$ and $A_0$ isogenous and there exists an abelian}\\
\mbox{subvariety } B_0 \subset A_0 \mbox{ of codimension $\geq k$ such that } p \in \phi_s(\Gamma+B_0) \}.
\end{multline}
\end{lem}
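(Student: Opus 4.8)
The plan is to prove the two inclusions separately; the inclusion ``$\supseteq$'' is immediate, since for each relevant $s$ the fixed $\phi_s\colon A_0\to\mathcal{A}_s$ is one admissible choice of isogeny in the original definition of $\mathcal{A}_{\Gamma}^{[k]}$, and the whole point is the other inclusion. So let $p\in\mathcal{A}_s$ for some $s\in\mathcal{S}$ and write $p=\phi(\gamma+b)$ with $\phi\colon A_0\to\mathcal{A}_s$ an isogeny, $\gamma\in\Gamma$, and $b\in B_0$ for some abelian subvariety $B_0\subset A_0$ of codimension $\geq k$; the goal is to produce an abelian subvariety $B_0'\subset A_0$ of codimension $\geq k$ with $p\in\phi_s(\Gamma+B_0')$.

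First I would choose an isogeny $\phi_s'\colon\mathcal{A}_s\to A_0$ and $n\in\mathbb{N}$ with $\phi_s'\circ\phi_s=[n]_{A_0}$; multiplying on the left by $\phi_s$ and using surjectivity of $\phi_s$ gives also $\phi_s\circ\phi_s'=[n]_{\mathcal{A}_s}$. Setting $\alpha=\phi_s'\circ\phi\in\End(A_0)$ one then has $\phi_s\circ\alpha=[n]_{\mathcal{A}_s}\circ\phi=\phi\circ[n]_{A_0}$, and applying this to $\gamma+b$ yields $[n]p=\phi_s\bigl(\alpha(\gamma)+\alpha(b)\bigr)$. Here $\alpha(\gamma)\in\Gamma$ because $\Gamma$ is stable under every endomorphism of $A_0$ by the standing assumption made in Section~\ref{sec:preliminaries}, and $B_0':=\alpha(B_0)$ is an abelian subvariety of $A_0$ of dimension $\leq\dim B_0$, hence of codimension $\geq k$, with $\alpha(b)\in B_0'$.

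It remains to descend from $[n]p$ to $p$. Write $p=\phi_s(q)$ using surjectivity of $\phi_s$; then $\phi_s\bigl([n]q-\alpha(\gamma)-\alpha(b)\bigr)=[n]p-[n]p=0$, so $[n]q=\alpha(\gamma)+\alpha(b)+t$ for some $t\in\ker\phi_s\subseteq(A_0)_{\tors}$. Now pick $\gamma''\in A_0$, $b''\in B_0'$, and $t''\in(A_0)_{\tors}$ with $[n]\gamma''=\alpha(\gamma)$, $[n]b''=\alpha(b)$, $[n]t''=t$: such $b''$ exists because $[n]$ is surjective on the abelian subvariety $B_0'$, such $t''$ exists because torsion points are divisible, and $\gamma''\in\Gamma$ because $\Gamma$ is saturated under division by integers (if $N\alpha(\gamma)\in\mathbb{Z}\gamma_1+\cdots+\mathbb{Z}\gamma_r$ then $nN\gamma''\in\mathbb{Z}\gamma_1+\cdots+\mathbb{Z}\gamma_r$). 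Then $q-\gamma''-b''-t''\in A_0[n]\subseteq(A_0)_{\tors}$, so using $(A_0)_{\tors}\subseteq\Gamma$ and that $\Gamma$ is a group we get $q\in\Gamma+B_0'$, hence $p=\phi_s(q)\in\phi_s(\Gamma+B_0')$, as desired.

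The argument presents no real obstacle; the only delicate point is that $\phi_s$ cannot be inverted, so one cannot simply transport $B_0$ and $\gamma$ along $\phi_s^{-1}$. The workaround is to pass through a quasi-inverse $\phi_s'$ at the cost of a multiplication by $n$ and then divide back by $n$, and this is harmless precisely because of the three closure properties of $\Gamma$ arranged in Section~\ref{sec:preliminaries}: stability under $\End(A_0)$, saturation under division by integers, and containment of $(A_0)_{\tors}$. The degenerate case $r=0$ (where $\Gamma=(A_0)_{\tors}$) is covered by the same computation verbatim.
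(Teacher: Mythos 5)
Your proof is correct and follows essentially the same route as the paper's: both form the endomorphism $\alpha=\phi_s'\circ\phi$ (the paper's $\chi=\tilde{\phi_s}\circ\phi$ with $n=\deg\phi_s$) and exploit the relation $\phi_s\circ\alpha=\phi\circ[n]_{A_0}$ together with the stability of $\Gamma$ under endomorphisms and under division. The only difference is cosmetic: the paper first writes $\gamma=[n]\tilde{\gamma}$ with $\tilde{\gamma}\in\Gamma$ and $B_0=[n]B_0$, so that $p\in\phi_s(\chi(\tilde{\gamma}))+\phi_s(\chi(B_0))$ falls out immediately, whereas you multiply $p$ by $n$ and then descend through the kernel and the torsion, which is correct but slightly longer.
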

\begin{proof}
We prove the non-trivial inclusion ``$\subset$". Suppose that $p \in \mathcal{A}_{\Gamma}^{[k]}$. Then $p$ lies in some $\mathcal{A}_s$ ($s \in \mathcal{S})$ such that $\mathcal{A}_{s}$ and $A_0$ are isogenous. By definition, there is an isogeny $\phi: A_0 \to \mathcal{A}_s$, an abelian subvariety $B_0$ of $A_0$ of codimension $\geq k$ and $\gamma \in \Gamma$ such that $p \in \phi(\gamma+B_0)$.

We denote by $\tilde{\phi_s}$ the isogeny from $\mathcal{A}_s$ to $A_0$ such that $\phi_s \circ \tilde{\phi_s}$ is multiplication by $\deg \phi_s$ on $\mathcal{A}_s$. Then $\chi = \tilde{\phi_s} \circ \phi$ is an endomorphism of $A_0$ and $\phi_s \circ \chi = (\deg \phi_s)\phi$.

We choose $\tilde{\gamma} \in A_0$ with $(\deg \phi_s)\tilde{\gamma} = \gamma$ and get
\[ p \in \phi(\gamma+B_0)=\phi((\deg \phi_s)\tilde{\gamma})+\phi((\deg \phi_s)B_0) = \phi_s(\chi(\tilde{\gamma}))+\phi_s(\chi(B_0)).\]

We show that $\chi(\tilde{\gamma}) \in \Gamma$ (so $p \in \phi_s(\Gamma) + \phi_s(\chi(B_0))$). Since $(\deg \phi_s)\tilde{\gamma}=\gamma \in \Gamma$, it follows that $\tilde{\gamma}$ lies in $\Gamma$ as well. By our assumption above, $\Gamma$ is mapped into itself by $\chi$. Hence, $\chi(\tilde{\gamma})$ belongs to $\Gamma$ as desired and the lemma follows, since $\chi(B_0)$ is again an abelian subvariety of $A_0$ of codimension $\geq k$.
\end{proof}

We take $\phi_s$ as an isogeny between $\mathcal{A}_{s}$ and $A_0$ of minimal degree, i.e. there exists no isogeny $\psi: A_0 \to \mathcal{A}_s$ of degree less than $\degÊ\phi_s$. By Th\'{e}or\`{e}me 1.4 of Gaudron-R\'{e}mond in \cite{MR3263028}, which improves a theorem of Masser-W\"ustholz (\cite{MR1217345}, p. 460), there exist constants $c_{MW}$ and $\kappa_{MW}$, depending only on $A_0$, such that
\begin{equation}\label{eq:masserwuestholz}
\deg \phi_s \leq c_{MW}[K(s):K]^{\kappa_{MW}},
\end{equation}
independently of $s$. Note that $\mathcal{A}_s$ and $A_0$ are both defined over $K(s)$.

\section{Height bounds for isogenies}\label{sec:heightbounds}
In the previous section, we took as $\phi_s$ just any isogeny between $A_0$ and $\mathcal{A}_{s}$ of minimal degree. This is fine in the case of elliptic curves, but in arbitrary dimension, we have to pick the distinguished isogeny more carefully. This will be achieved in Proposition \ref{prop:isogenyheightbounds} and Corollary \ref{cor:choiceofisogeny}, where we replace $\phi_s$ by $\phi_s \circ \sigma$ for some well-chosen automorphism $\sigma$ of $A_0$.

Proposition \ref{prop:isogenyheightbounds}(ii) and Corollary \ref{cor:choiceofisogeny}(ii) are essentially contained in Orr's work \cite{MR3377393}, albeit formulated rather differently, and our proofs of these results basically run along the same lines as his. Another way to get the desired bounds on quantities associated to an isogeny between $A_0$ and $\mathcal{A}_s$ ($s \in \mathcal{S}$) would be to replace the use of Orr's Proposition 4.2 from \cite{MR3377393} with the endomorphism estimate from Lemma 5.1 of Masser and W\"{u}stholz in \cite{MR1269495} for $A_0 \times \mathcal{A}_s$ (an improved, completely explicit bound can be deduced from Section 9 of \cite{MR3263028}, Lemme 2.11 of \cite{R18} and Minkowski's second theorem) and an argument as in Section 6 of \cite{MR1269495}. Afterwards, one could continue as we do here and obtain bounds that are polynomial (in the sense of \eqref{eq:preceqdefn}) not necessarily in the degree of the isogeny, but certainly in $[K(s):K]$.

Before we can prove the proposition, we need the following technical lemma.

\begin{lem}\label{lem:symplecticsiegelreduction}
Let $g$ be a natural number and $M \in \M_{2g}(\mathbb{Z})$ with $\det M \neq 0$. Let
\[\mathcal{H} = H\left(M^t \begin{pmatrix} 0 & E_g \\ -E_g & 0\end{pmatrix} M\right).\]
Then there are constants $C=C(g)$ and $\kappa=\kappa(g)$ and matrices $S \in \Sp_{2g}(\mathbb{Z})$, $P \in \M_{2g}(\mathbb{Z})$ such that $M = SP$ and $H(P) \leq C\mathcal{H}^{\kappa}$.
\end{lem}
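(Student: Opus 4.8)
The plan is to use the theory of elementary divisors (Smith normal form) together with the classical reduction theory for symplectic forms over $\mathbb{Z}$. Write $J = \left(\begin{smallmatrix} 0 & E_g \\ -E_g & 0\end{smallmatrix}\right)$ and set $N = M^t J M$, so that $N$ is an integral skew-symmetric matrix with $\det N = (\det M)^2 \neq 0$ and $H(N) = \mathcal{H}$. The matrix $M$ defines a sublattice $M\mathbb{Z}^{2g}$ of $\mathbb{Z}^{2g}$ of index $|\det M|$, and $N$ is precisely the Gram matrix of the standard symplectic form $\langle \cdot, \cdot \rangle$ (with matrix $J$) restricted to this sublattice, expressed in the basis given by the columns of $M$. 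The claim we want — existence of $S \in \Sp_{2g}(\mathbb{Z})$ and integral $P$ with $M = SP$ and $H(P)$ polynomially bounded in $\mathcal{H}$ — amounts to finding a symplectic basis of $\mathbb{Z}^{2g}$ with respect to which the columns of $M$ have polynomially bounded coordinates; equivalently, a $\mathbb{Z}$-basis of $M\mathbb{Z}^{2g}$ adapted to the symplectic structure whose coordinates in some symplectic basis of $\mathbb{Z}^{2g}$ are small.

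The key steps I would carry out are as follows. First, apply the symplectic elementary divisor theorem (Frobenius): there exist $S_1 \in \Sp_{2g}(\mathbb{Z})$ and $U \in \GL_{2g}(\mathbb{Z})$ and positive integers $d_1 \mid d_2 \mid \cdots \mid d_g$ such that $S_1 M U = D$, where $D$ is the block matrix $\left(\begin{smallmatrix} \diag(d_1,\dots,d_g) & 0 \\ 0 & \diag(e_1,\dots,e_g)\end{smallmatrix}\right)$ with $d_i e_i$ the elementary divisors of $M$ arranged appropriately — more precisely, $M\mathbb{Z}^{2g}$ admits a basis $f_1,\dots,f_g,f_1',\dots,f_g'$ with $\langle f_i, f_j'\rangle = \delta_{ij} m_i$ and all other pairings zero, where the $m_i$ are the successive symplectic elementary divisors. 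Then $D = S_1 M U$ is the desired $P$-candidate after absorbing $U \in \GL_{2g}(\mathbb{Z})$ into $P$: we may write $M = S_1^{-1} D U^{-1}$ and set $S = S_1^{-1} \in \Sp_{2g}(\mathbb{Z})$, $P = D U^{-1}$. The point is then to bound $H(P) = H(D U^{-1})$. The diagonal entries of $D$ are products of elementary divisors of $M$, hence each divides $\det M$, and $|\det M| = |\det N|^{1/2} = |\det(M^t J M)|^{1/2}$, which by the standard Hadamard-type bound on a $2g \times 2g$ determinant is at most $((2g)\mathcal{H})^{g} \cdot (\text{const})$ — in any case polynomially bounded in $\mathcal{H}$ with exponent depending only on $g$. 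So $H(D) \leq C'(g)\mathcal{H}^{\kappa'(g)}$. It remains to control $H(U^{-1})$: here I would invoke that $U$ (equivalently $U^{-1}$) can be chosen with entries bounded polynomially in terms of the entries of $M$ and its invariant factors — this is the effective/quantitative form of Smith normal form (or equivalently LLL-type reduction of the lattice $M\mathbb{Z}^{2g}$), with bounds depending only on $g$ and $H(M)$. Finally I would bound $H(M)$ itself in terms of $\mathcal{H}$: from $N = M^t J M$ one recovers $M^{-1} = J^{-1} M^t N^{-1}$ up to sign, and since $M M^{-1} = E_{2g}$ this pins down the entries of $M$ in terms of those of $N$ and $N^{-1}$; as $N^{-1}$ has entries (adjugate divided by $\det N$) polynomially bounded in $\mathcal{H}$, one gets $H(M) \leq C''(g)\mathcal{H}^{\kappa''(g)}$, and then all the preceding bounds become bounds in $\mathcal{H}$ alone.

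The main obstacle I anticipate is the bookkeeping in the effective Smith normal form step: getting a genuinely polynomial (in $\mathcal{H}$, with exponent depending only on $g$) bound on the transforming matrix $U^{-1}$, rather than the a priori exponential bounds that naive Gaussian elimination over $\mathbb{Z}$ can produce. The clean way around this is to avoid diagonalizing $M$ directly and instead argue geometrically: pick a reduced basis of the lattice $M\mathbb{Z}^{2g}$ (Minkowski-reduced, or any basis achieving the successive minima up to a constant), observe that its vectors have length $\ll \mathcal{H}^{O_g(1)}$ because the covolume is $|\det M| \ll \mathcal{H}^{O_g(1)}$ and each successive minimum is at least $1$, then extend/modify this to a symplectic basis of $M\mathbb{Z}^{2g}$ using the Frobenius normal form within the lattice (which only costs unimodular changes with bounded coefficients since we are working in a fixed-rank lattice), and finally complete it to a symplectic basis of the ambient $\mathbb{Z}^{2g}$; the change-of-basis matrix to the standard symplectic basis is exactly $S \in \Sp_{2g}(\mathbb{Z})$, and $P = S^{-1} M$ has columns that are the reduced basis vectors of $M\mathbb{Z}^{2g}$ expressed in the standard basis, hence of polynomially bounded height. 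Everything else — the determinant bound, the passage from $\mathcal{H}$ to $H(M)$ — is routine linear algebra with Hadamard and adjugate estimates.
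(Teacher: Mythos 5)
There is a genuine gap, and it sits exactly where you flag "the main obstacle". Write $J=\bigl(\begin{smallmatrix}0 & E_g\\ -E_g & 0\end{smallmatrix}\bigr)$ and $N=M^tJM$. Your closing claim that $H(M)$ is polynomially bounded in $\mathcal{H}$ is false: any $M\in\Sp_{2g}(\mathbb{Z})$ has $N=J$, hence $\mathcal{H}=1$, while $H(M)$ is unbounded. The identity $M^{-1}=N^{-1}M^tJ$ holds for every invertible $M$ and pins down nothing; $N$ determines $M$ only up to left multiplication by $\Sp_{2g}(\mathbb{Z})$, which is precisely the ambiguity the lemma is about. This kills the first variant, since your control of the Smith/Frobenius transforming matrix $U^{-1}$ was "in terms of $H(M)$". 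The geometric variant does not repair it. A Minkowski-reduced basis of $\Lambda=M\mathbb{Z}^{2g}$ does have length polynomial in $|\det M|$, but the columns of $M$ are not that basis, and the passage between them is a \emph{right} multiplication by an element of $\GL_{2g}(\mathbb{Z})$ that cannot be absorbed: the factorization $M=SP$ forces $P=S^{-1}M$ exactly. Your sentence "$P=S^{-1}M$ has columns that are the reduced basis vectors of $M\mathbb{Z}^{2g}$ expressed in the standard basis" conflates the two unimodular ambiguities; $S^{-1}M$ expresses the (possibly huge) columns of $M$ in the symplectic basis given by the columns of $S$, and boundedness of that product is the assertion to be proved, not a consequence of having a short basis of $\Lambda$. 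Indeed the lattice $\Lambda$ alone cannot suffice: every $M\in\GL_{2g}(\mathbb{Z})$ gives $\Lambda=\mathbb{Z}^{2g}$, yet the correct $S$ (which must be roughly as large as $M$, since $S=MP^{-1}$) depends on $M$ itself. The "unimodular changes with bounded coefficients since we are working in a fixed-rank lattice" in the Frobenius step is likewise unsubstantiated; fixed rank does not bound transition matrices.

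For comparison, the paper's proof first extracts a Hermite normal form $M=M_1P_1$ with $M_1\in\GL_{2g}(\mathbb{Z})$ and $H(P_1)\leq|\det M|$, which is polynomial in $\mathcal{H}$ --- note the uncontrolled unimodular factor is placed on the \emph{left}, where it is harmless --- and then invokes Orr's Lemma 4.3 of \cite{MR3377393}, which is exactly the quantitative input you are missing: for unimodular $M_1$ there is $P_2\in\GL_{2g}(\mathbb{Z})$ with $M_1P_2\in\Sp_{2g}(\mathbb{Z})$ and $H(P_2)$ polynomial in $H(M_1^tJM_1)$. Some such effective symplectic reduction statement (Orr's lemma, or a Frobenius normal form with transforming matrices controlled by the Gram matrix $N$ alone, not by $H(M)$) is indispensable; without it the proposal does not close.
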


\begin{proof}
Using elementary row operations from $\GL_{2g}(\mathbb{Z})$, we can write $M = M_1 P_1$ with $M_1 \in \GL_{2g}(\mathbb{Z})$ and $P_1 \in \M_{2g}(\mathbb{Z})$ upper triangular. The (non-zero) diagonal entries of $P_1$ are then bounded by $|\det M|$ and after more row operations we can assume that the entries above the diagonal entry $d$ lie in the set $\{0,1,\hdots,|d|-1\}$. So we can assume without loss of generality that $H(P_1)$ is bounded by $|\det M|$, which is of course polynomially bounded in $\mathcal{H}$. Then
\[\mathcal{H}' = H\left(M_1^t \begin{pmatrix} 0 & E_g \\ -E_g & 0\end{pmatrix} M_1\right)\]
is also polynomially bounded in $\mathcal{H}$, so it suffices to prove the lemma for $M_1$ and $\mathcal{H}'$ instead of $M$ and $\mathcal{H}$.

The lemma is now a consequence of Orr's Lemma 4.3 in \cite{MR3377393}, which can be reformulated as asserting that there exists $P_2 \in \GL_{2g}(\mathbb{Z})$ of height bounded polynomially in $\mathcal{H}'$ such that $M_1 P_2 \in \Sp_{2g}(\mathbb{Z})$.
\end{proof}

Before we can state the next theorem, we have to define what a Siegel fundamental domain for the action of (a finite-index subgroup of) $\Sp_{2g}(\mathbb{Z})$ on $\mathbb{H}_g$ is. We give the definition that goes back to Siegel in \cite{MR0001251}, \S 2.

\begin{defn}\label{defn:siegelfundamentalset}
\begin{enumerate}
\item A positive definite symmetric matrix
\[M = (m_{ij})_{i,j=1,\hdots,g} \in \M_g(\mathbb{R})\]
is called Minkowski-reduced if $v^tMv \geq m_{ii}$ for all $v^t = (v_1,\hdots,v_g) \in \mathbb{Z}^g$ with $\gcd(v_i,\hdots,v_g) = 1$ and all $i = 1, \hdots, g$ and $m_{i,i+1} \geq 0$ for all $i = 1,\hdots, g-1$.
\item The Siegel fundamental domain for $\Sp_{2g}(\mathbb{Z})$ or the Siegel fundamental domain is the set of $\tau = (\tau_{ij})_{i,j=1,\hdots,g} \in \mathbb{H}_g$ such that $\det(\Im(M[\tau])) \leq \det(\Im \tau)$ for all $M \in \Sp_{2g}(\mathbb{Z})$, $\Im \tau$ is Minkowski-reduced and $|\Re \tau_{ij}| \leq \frac{1}{2}$ ($i,j = 1, \hdots,g)$.
\item If $F$ denotes the Siegel fundamental domain for $\Sp_{2g}(\mathbb{Z})$, $G \subset \Sp_{2g}(\mathbb{Z})$ is a subgroup of finite index and $g_1=E_{2g},g_2, \hdots, g_n$ is a system of representatives for its right cosets, then $\bigcup_{j=1}^{n}{g_jF}$ is called a Siegel fundamental domain for $G$.
\end{enumerate}
\end{defn}

It is a classical fact that for only finitely many $M \in \Sp_{2g}(\mathbb{Z})$ there exists some $\tau$ in the Siegel fundamental domain with $M[\tau]$ also in the Siegel fundamental domain and that every element of $\mathbb{H}_g$ can be brought into the Siegel fundamental domain by some element of $\Sp_{2g}(\mathbb{Z})$. The same facts then easily follow for every Siegel fundamental domain for some subgroup of $\Sp_{2g}(\mathbb{Z})$ of finite index. This is everything we will need to know about Siegel fundamental domains in this section.

\begin{prop}\label{prop:isogenyheightbounds}
Let $A$ and $B$ be two abelian varieties of dimension $g$, defined over $\mathbb{C}$ and uniformized as $\mathbb{C}^g/\Omega_A\mathbb{Z}^{2g}$ and $\mathbb{C}^g/\Omega_B\mathbb{Z}^{2g}$ respectively, where $\Omega_{A}=(T_{A}~~E_g)$ and $\Omega_{B}=(T_B~~E_g)$ with $T_A$, $T_B \in F$ and $F$ denotes a Siegel fundamental domain for a subgroup of $\Sp_{2g}(\mathbb{Z})$ of finite index. Let $\mathcal{M}$ and $\mathcal{N}$ be ample line bundles on $A$ and $B$ respectively which are associated to the Hermitian forms given by $(\Im T_A)^{-1}$ and $(\Im T_B)^{-1}$ respectively on $\mathbb{C}^g$.

Let $\phi: A \to B$ be an isogeny. Then there exist constants $C$ and $\kappa$, depending only on $F$, $A$, $\Omega_{A}$ and $\mathcal{M}$, but not on $B$ or $\phi$, a natural number $n \in \mathbb{N}$, an automorphism $\sigma: AÊ\to A$ and a matrix $\PhiÊ\in \M_{2g}(\mathbb{Z})$ such that
\begin{enumerate}[label=(\roman*)]
\item $((\phi \circÊ\sigma)^{\ast}\mathcal{N})^{\otimes n}Ê\otimes \mathcal{M}^{\otimes(-1)}$ is ample and $n \leq C(\deg \phi)^{\kappa}$.
\item $\Phi$ is the rational representation of $\phi \circÊ\sigma$ with respect to the lattice bases given by $\Omega_{A}$ and $\Omega_{B}$ and $H(\Phi) \leq C(\degÊ\phi)^{\kappa}$.
\end{enumerate}
\end{prop}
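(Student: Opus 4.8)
The plan is to translate the statement into the reduction theory of lattices and polarizations and proceed as Orr does in \cite{MR3377393}. Let $\rho_{\phi}\in\GL_{g}(\mathbb{C})$ be the analytic (tangent-space) representation of $\phi$; since $\phi$ is an isogeny there is a unique rational representation $M\in\M_{2g}(\mathbb{Z})$ with $\rho_{\phi}\,\Omega_{A}=\Omega_{B}M$, and $|\det M|=\deg\phi$. The Hermitian form attached to $\mathcal{N}$ has imaginary part the standard symplectic form $J_{0}:=\left(\begin{smallmatrix}0&E_{g}\\-E_{g}&0\end{smallmatrix}\right)$ with respect to $\Omega_{B}$, so $\phi^{\ast}\mathcal{N}$ is a polarization of $A$ whose alternating form with respect to $\Omega_{A}$ is $E:=M^{t}J_{0}M$, with $|\operatorname{Pf}(E)|=\deg\phi$; write $H'$ for the corresponding positive definite Hermitian form on $\mathbb{C}^{g}$. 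Composing $\phi$ with $\sigma\in\Aut(A)$, of rational representation $R\in\GL_{2g}(\mathbb{Z})$, replaces $M$ by $MR$, $E$ by $R^{t}ER$ and $H'$ by a conjugate of it; so the task reduces to choosing $\sigma$ so that $(\phi\circ\sigma)^{\ast}\mathcal{N}$ is a \emph{reduced} polarization of $A$.

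The heart of the proof — call it Step~1 — is to produce such a $\sigma$. Using Orr's Proposition~4.2 in \cite{MR3377393} together with Minkowski's reduction theory (or, as indicated after the statement, the Masser--W\"{u}stholz endomorphism estimate for $A\times B$ made explicit via \cite{MR3263028}, \cite{R18} and Minkowski's second theorem) one chooses $\sigma\in\Aut(A)$ such that, with $\Phi:=MR$ the rational representation of $\phi\circ\sigma$ with respect to $\Omega_{A},\Omega_{B}$ and $E':=\Phi^{t}J_{0}\Phi=R^{t}ER$, one has $H(E')\leq C(\deg\phi)^{\kappa}$, and the Hermitian form $H''$ of $(\phi\circ\sigma)^{\ast}\mathcal{N}$ satisfies $\tfrac{1}{n}H_{A}\preceq H''$ in the order on positive definite forms for some $n\leq C(\deg\phi)^{\kappa}$, where $H_{A}=(\Im T_{A})^{-1}$ is the form of $\mathcal{M}$; the constants depend only on $A$, $\Omega_{A}$, $\mathcal{M}$. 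The underlying fact is that the polarizations of $A$ of degree at most $D$ fall into finitely many $\Aut(A)$-orbits, each with a representative of height polynomial in $D$ whose Hermitian form is comparable to $H_{A}$ up to a polynomial-in-$D$ factor; making this work for the non-principal polarization $\phi^{\ast}\mathcal{N}$ and extracting both bounds simultaneously is the main obstacle.

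Granting Step~1, part (i) is immediate: a line bundle on a complex abelian variety is ample iff its Hermitian form is positive definite, and the form of $((\phi\circ\sigma)^{\ast}\mathcal{N})^{\otimes n}\otimes\mathcal{M}^{\otimes(-1)}$ is $nH''-H_{A}\succ 0$. For part (ii) we must bound $H(\Phi)$. Apply Lemma~\ref{lem:symplecticsiegelreduction} to $\Phi$: write $\Phi=SP$ with $S\in\Sp_{2g}(\mathbb{Z})$, $P\in\M_{2g}(\mathbb{Z})$ and $H(P)\leq C\,H(\Phi^{t}J_{0}\Phi)^{\kappa}=C\,H(E')^{\kappa}\leq C'(\deg\phi)^{\kappa'}$ by Step~1. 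To bound $S$, rewrite $\rho_{\phi\circ\sigma}\,\Omega_{A}=\Omega_{B}\Phi=(\Omega_{B}S)P$; since $S$ is symplectic, $\Omega_{B}S=U\,\Omega_{\widetilde{T}}$ for some $U\in\GL_{g}(\mathbb{C})$ and some $\widetilde{T}\in\mathbb{H}_{g}$ in the $\Sp_{2g}(\mathbb{Z})$-orbit of $T_{B}$ (with the principal polarizations matching up), so $P$ is the rational representation of an isogeny $A\to\mathbb{C}^{g}/\Omega_{\widetilde{T}}\mathbb{Z}^{2g}$ of degree $|\det P|$. Since $T_{A}\in F$ is fixed, $H(P)$ is polynomially bounded and $|\det P|=\deg\phi$, the transformation formula for period matrices under isogeny confines $\widetilde{T}$ to a region of $\mathbb{H}_{g}$ whose entries are polynomially bounded in $\deg\phi$ and on which $\det\Im\widetilde{T}$ is bounded above by a polynomial and below by the reciprocal of a polynomial in $\deg\phi$. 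As $T_{B}$ is Siegel-reduced and $\Sp_{2g}(\mathbb{Z})$-equivalent to $\widetilde{T}$, Siegel reduction theory then bounds $H(S)$ polynomially in $\deg\phi$, whence $H(\Phi)=H(SP)\leq C(\deg\phi)^{\kappa}$. Besides Step~1, the delicate point is this last passage through the Siegel fundamental domain: locating $\widetilde{T}$ and deducing a polynomial bound for $H(S)$.
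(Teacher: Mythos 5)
Your overall architecture matches the paper's: choose $\sigma$ via Orr's Proposition~4.2, decompose $\Phi=SP$ via Lemma~\ref{lem:symplecticsiegelreduction}, and bound $S$ by locating $S^t[T_B]$ and invoking Siegel reduction theory. However, the two places where you defer the actual work are exactly where the proof lives, and one of your substitutes does not obviously function.

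First, your ``Step~1'' asserts, without derivation, both $H(E')\leq C(\deg\phi)^{\kappa}$ and the comparison $\tfrac1n H_A\preceq H''$ with $n\leq C(\deg\phi)^{\kappa}$; the latter \emph{is} assertion~(i), so flagging it as ``the main obstacle'' and moving on leaves the claim unproved. Orr's Proposition~4.2 only bounds the rational representation $M_1$ of the symmetric totally positive endomorphism $\phi_{\mathcal M}^{-1}\circ\widehat{\phi\circ\sigma}\circ\phi_{\mathcal N}\circ(\phi\circ\sigma)$. One must then observe that $M_3=\Phi^tB'\Phi=A'M_1$ (where $A'$, $B'$ are the Gram matrices of $\Re H_{\mathcal M}$, $\Re H_{\mathcal N}$) to bound the Gram matrix of $\Re H''$, run a Cauchy--Schwarz/operator-norm argument comparing $M_3$ with $A'=M_3M_1^{-1}$ to extract the admissible $n$, and use $|\Im H''(v,w)|^2\leq\Re H''(v,v)\,\Re H''(w,w)$ to pass from $M_3$ to $E'=\Phi^tJ_0\Phi$. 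None of this is in your text.

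Second, and more seriously, your confinement of $\widetilde T=S^t[T_B]=P^{-t}[T_A]$ via ``the transformation formula for period matrices under isogeny'' is not justified by the data you have. Knowing $H(P)\preceq\deg\phi$ and $|\det P|=\deg\phi$ bounds $\lVert P^{-1}\rVert$, but writing $\widetilde T=(P_4T_A-P_3)(P_1-P_2T_A)^{-1}$ one still needs a \emph{lower} bound on the smallest singular value (equivalently on $|\det(P_1-P_2T_A)|$) to bound the entries of $\widetilde T$, and a lower bound on $\det\Im\widetilde T$; for $g\geq2$ and generic $T_A$, integer matrices of bounded height and determinant can make $\det(P_1-P_2T_A)$ very small, and the identity $\det\Im\widetilde T\cdot|\det(P_1-P_2T_A)|^2=\text{(bounded)}$ only gives upper bounds on the product. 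The paper obtains the missing lower bounds by a different mechanism: it forms the Gram matrix $M_4=S^tB'S=P^{-t}M_3P^{-1}$ of the \emph{principal} form $\Re H_{\mathcal N}$ in the basis $\Omega_BS$, whose entries are controlled, writes it explicitly in block form in terms of $\Re S^t[T_B]$ and $(\Im S^t[T_B])^{-1}$, and uses Minkowski's determinant inequality to bound $\det\Im S^t[T_B]$ from above and hence all entries of $S^t[T_B]$ and $(\det\Im S^t[T_B])^{-1}$. You would need to supply this (or an equivalent) argument before Siegel reduction theory can be applied to bound $H(S)$.
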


\begin{proof}
Let $\phi_{\mathcal{M}}$ and $\phi_{\mathcal{N}}$ be the principal polarizations induced by $\mathcal{M}$ and $\mathcal{N}$ respectively. Consider $\psi = \phi_{\mathcal{M}}^{-1} \circ \hat{\phi} \circ \phi_{\mathcal{N}} \circÊ\phi \in \End(A)$. It is symmetric, i.e. $\psi' = \psi$, where $\psi' = \phi_{\mathcal{M}}^{-1} \circÊ\hat{\psi} \circ \phi_{\mathcal{M}}$ denotes the Rosati involution. It is also totally positive (or positive definite in the terminology of \cite{MR3377393}) by Theorem 5.2.4 of \cite{MR2062673}, since $\hat{\phi} \circ \phi_{\mathcal{N}} \circÊ\phi = \phi_{\phi^{\ast}\mathcal{N}}$ is a polarization of $A$.

Therefore, we can apply Orr's Proposition 4.2 in \cite{MR3377393} and deduce that there is a constant $c$, depending only on $A$ and $\Omega_{A}$, and $\sigma \in \Aut(A)$ such that the rational representation of $\sigma' \circ \psi \circ \sigma$ with respect to the lattice given by $\Omega_{A}$ has height bounded by $c(\deg \phi)^2$ (we choose $(\End A,')$ as $(R,\dagger)$ and the rational representation with respect to the lattice given by $\Omega_{A}$ as $\rho$). We have $\sigma' \circ \psi \circ \sigma = \phi_{\mathcal{M}}^{-1} \circ \widehat{(\phi \circÊ\sigma)} \circ \phi_{\mathcal{N}} \circÊ(\phi \circÊ\sigma)$, so we can replace $\phi$ by $\phi \circ \sigma$ and $\psi$ by $\sigma' \circ \psi \circÊ\sigma$ and verify (i) and (ii) for this new $\phi$ (and $\sigma=\id$), where $\Phi \in \M_{2g}(\mathbb{Z})$ is the rational representation of $\phi$ with respect to the lattice bases given by $\Omega_{A}$ and $\Omega_{B}$. We have $|\det \Phi| = \deg \phi \neq 0$.

Let $H_{\mathcal{M}}$ and $H_{\mathcal{N}}$ be the Hermitian forms on $\mathbb{C}^g$ associated to $\mathcal{M}$ and $\mathcal{N}$ respectively and let $A'$ and $B'$ be the matrices in $\M_{2g}(\mathbb{R})$ that represent the symmetric positive definite forms $\Re H_{\mathcal{M}}$ and $\Re H_{\mathcal{N}}$ with respect to the lattice bases given by $\Omega_{A}$ and $\Omega_{B}$ respectively. Let $M_1 \in \M_{2g}(\mathbb{Z})$ be the rational representation of $\psi$ with respect to the lattice basis given by $\Omega_{A}$. Now $\psi$ satisfies $\phi_{\mathcal{M}} \circ \psi = \hat{\phi} \circ \phi_{\mathcal{N}} \circÊ\phi$. By taking the analytic representations of both sides, where the dual abelian varieties are canonically uniformized as quotients of the vector space of $\mathbb{C}$-antilinear maps from $\mathbb{C}^g$ to $\mathbb{C}$, it follows (with Lemma 2.4.5 from \cite{MR2062673}) that
\[ÊH_{\mathcal{M}}(\psi(v),w)=H_{\mathcal{N}}(\phi(v),\phi(w))\]
for all $v,w \in \mathbb{C}^g$, where we use $\phi$ and $\psi$ also for the linear maps from $\mathbb{C}^g$ to $\mathbb{C}^g$ corresponding to the analytic representations of $\phi$ and $\psi$ with respect to the given uniformization. By taking real parts and passing to rational representations, we deduce that
\[Ê(M_1v)^t A'w = v^t \Phi^t B' \Phi w\]
for all $v,w \in \mathbb{R}^{2g}$ and it follows that $M_1^t=\Phi^tB'\Phi(A')^{-1}$ and therefore $M_1 = (A')^{-1}\Phi^tB'\Phi$.

Let $H_{\phi^{\ast}\mathcal{N}}$ be the Hermitian form associated to $\phi^{\ast}\mathcal{N}$. The ampleness of $(\phi^{\ast}\mathcal{N})^{\otimes n}Ê\otimes \mathcal{M}^{\otimes(-1)}$ is equivalent to the positive definiteness of its Hermitian form $H_n = nH_{\phi^{\ast}\mathcal{N}}-H_{\mathcal{M}}$ and this is equivalent to the positive definiteness of the symmetric bilinear form $\Re H_n$. One computes that $\Re H_n$ is represented by $M_2=n\Phi^{t}B'\Phi-A'$ with respect to the lattice given by $\Omega_{A}$. Let $v \in \mathbb{R}^{2g}$ be an arbitrary non-zero vector and $M_3=\Phi^t B' \Phi$. Then we have
\[v^{t}M_2v =nv^{t}M_3v-v^{t} M_3 (M_1^{-1}v),\]
and using the Cauchy-Schwarz inequality for the scalar product given by $M_3$ we obtain
\[ v^{t}M_2v  \geq \sqrt{v^{t} M_3 v}\left(n\sqrt{v^{t} M_3 v}-\sqrt{(M_1^{-1}v)^{t}M_3(M_1^{-1}v)}\right).\]

In order to make this quantity positive, $n$ must be bigger than the operator norm of $M_1^{-1}$ with respect to the scalar product given by $M_3$, i.e.
\[ n > \sqrt{(M_1^{-1}v_0)^{t}M_3(M_1^{-1}v_0)}\]
for every $v_0 \in \mathbb{R}^{2g}$ with $v_0^{t}M_3v_0=1$.

We know from Orr's proposition that all coefficients of $M_1$ are bounded by $c(\deg \phi)^2$. Therefore, we can bound the coefficients of both $M_3 = A'M_1$ and $M_3^{-1} = M_1^{-1}(A')^{-1}$ by some power of $\deg \phi$ times a constant, where the constant depends only on $A$, $\Omega_{A}$ and $\mathcal{M}$ (note that $|\det M_1| = (\deg \phi)^2 \geq 1$, so we have a similar bound for the coefficients of $M_1^{-1}$ as for the coefficients of $M_1$).

Since $B'$ and hence $M_3$ is symmetric and positive definite, there is a matrix $\tilde{M_3} \in \GL_{2g}(\mathbb{R})$ such that $M_3=\tilde{M_3}^{t}\tilde{M_3}$. We can then write  $M_3^{-1} = \tilde{M_3}^{-1}\tilde{M_3}^{-t}$, so the coefficients of $\tilde{M_3}$ and $\tilde{M_3}^{-1}$ must be similarly bounded.

When $(\tilde{M_3} v_0)^{t}\tilde{M_3} v_0=1$, the coordinates of $\tilde{M_3} v_0$ are at most $1$ in absolute value. Hence, those of $v_0=\tilde{M_3}^{-1}(\tilde{M_3} v_0)$ are also bounded by some power of $\deg \phi$ times a constant which depends only on $A$, $\Omega_{A}$ and $\mathcal{M}$. Finally we fix $n$ to be the largest integer with
\[ n \leq \sqrt{(M_1^{-1}v_0)^{t}M_3(M_1^{-1}v_0)}+1\]
and obtain a bound of the desired form. This proves (i).

For (ii), we have $\Phi^t[T_{B}]=T_{A}$ for the partial action of $GL_{2g}(\mathbb{Q})$ on $\mathbb{H}_g$ that restricts to the usual action of $\Sp_{2g}(\mathbb{Z})$.

By Lemma \ref{lem:symplecticsiegelreduction}, we can write $\Phi=SP$, where $S\in \Sp_{2g}(\mathbb{Z})$ and $P \in \M_{2g}(\mathbb{Z})$ with $H(P)$ bounded polynomially in $H\left(\Phi^t \left(\begin{smallmatrix}0 & E_g \\ -E_g & 0 \end{smallmatrix}\right)\Phi\right)$. But now $\Phi^t \left(\begin{smallmatrix}0 & E_g \\ -E_g & 0 \end{smallmatrix}\right) \Phi$ represents the imaginary part of the Hermitian form $H_{\phi^{\ast}\mathcal{N}}$ with respect to the lattice basis given by $\Omega_{A}$ (here we use that the lattice basis associated to $\Omega_{B}$ is symplectic with respect to $H_{\mathcal{N}}$). We have
\[ |\Im H_{\phi^{\ast}\mathcal{N}}(v,w)|^2 \leq |H_{\phi^{\ast}\mathcal{N}}(v,w)|^2 \leq H_{\phi^{\ast}\mathcal{N}}(v,v)H_{\phi^{\ast}\mathcal{N}}(w,w)\]
by Cauchy-Schwarz, where $v, w \in \mathbb{C}^g$.

Furthermore, we know that
\[H_{\phi^{\ast}\mathcal{N}}(v,v)H_{\phi^{\ast}\mathcal{N}}(w,w) = \Re H_{\phi^{\ast}\mathcal{N}}(v,v)\Re H_{\phi^{\ast}\mathcal{N}}(w,w).\]
But $\Re H_{\phi^{\ast}\mathcal{N}}$ is represented by $M_3 = \Phi^t B' \Phi$ and we have already bounded the coefficients of that matrix. So the coefficients of $\Phi^t \left(\begin{smallmatrix}0 & E_g \\ -E_g & 0 \end{smallmatrix}\right) \Phi$ are also bounded polynomially in $\deg \phi$ and as they are integers, their height is similarly bounded.

This means we have written $\Phi=SP$, where $S\in \Sp_{2g}(\mathbb{Z})$ and $P \in \M_{2g}(\mathbb{Z})$ with $H(P)$ polynomially bounded in $\deg \phi$. Furthermore, $S^t[T_{B}]$ is an element of $\mathbb{H}_g$. There is $R \in \Sp_{2g}(\mathbb{Z})$ such that $(RS^t)[T_{B}]$ lies again in the Siegel fundamental domain. By \cite{MR3020307}, Lemma 3.2, the height of $R$ is polynomially bounded in terms of the maximum of the absolute values of the coefficients of $S^t[T_{B}]$ together with $1$ and $(\det \Im S^t[T_{B}])^{-1}$. Note that such a bound holds for the Siegel fundamental domain as defined here although in \cite{MR3020307} Siegel's definition from \cite{MR0164063} is used, which demands that $\left(\Im\tau\right)^{-1}$ instead of $\Im \tau$ is Minkowski-reduced, since by Lemma 3.3 of \cite{MR3020307} and Lemma 3.1(3) of \cite{MR3152943} one can switch between the two fundamental domains in a (polynomially) controlled way.

In order to bound the absolute values of the coefficients of $S^t[T_{B}]$ as well as $(\det \Im S^t[T_{B}])^{-1}$, we consider the matrix $M_4 = S^t B' S = P^{-t}M_3P^{-1}$. Recall that $\det P = \det \Phi \neq 0$. As we have a bound on the coefficients of $M_3$ and on $H(P)$, we deduce a similar bound for the coefficients of $M_4$. If we write $S = \left(\begin{smallmatrix} S_1^t & S_3^t \\ S_2^t & S_4^t \end{smallmatrix}\right)$, then we see that $M_4$ represents the real part of $H_{\mathcal{N}}$ with respect to the lattice basis given by the columns of $\left(\begin{smallmatrix} T_B S_1^t+S_2^t & T_B S_3^t + S_4^t\end{smallmatrix}\right)$.

In order to compute $M_4$, it is useful to choose the basis given by the columns of $T_B S_3^t + S_4^t$ for $\mathbb{C}^g$. That this matrix has non-zero determinant (and hence its columns form a basis) follows from the proof that $\Sp_{2g}(\mathbb{Z})$ acts on $\mathbb{H}_g$ by $(U,\tau) \mapsto U[\tau]$.

With respect to this new basis of $\mathbb{C}^g$, the lattice basis given by the columns of $\left(\begin{smallmatrix} T_B S_1^t+S_2^t & T_B S_3^t + S_4^t\end{smallmatrix}\right)$ is given by the matrix $\left(\begin{smallmatrix} S^t[T_B] & E_g\end{smallmatrix}\right)$. Furthermore, the Hermitian form $H_{\mathcal{N}}$ is given by $(S_3T_B+S_4)(\Im T_B)^{-1}(\overline{T_B}S_3^t+S_4^t) = (\Im S^t[T_B])^{-1}$ with respect to this new basis of $\mathbb{C}^g$ (see the calculation in \cite{MR2062673}, p. 214).

With this new basis for $\mathbb{C}^g$, it is easy to compute
\[ M_4 = \begin{pmatrix} M_5 &  (\Re S^t[T_B])(\Im S^t[T_B])^{-1}\\ (\Im S^t[T_B])^{-1}(\Re S^t[T_B]) & (\Im S^t[T_B])^{-1}\end{pmatrix},\]
where
\[M_5 = (\Re S^t[T_B])(\Im S^t[T_B])^{-1}(\Re S^t[T_B])+\Im S^t[T_B].\]
Here, we used that $S^t[T_B]$ and hence both its real and imaginary part are symmetric.

Now our bound on the coefficients of $M_4$ yields first an upper bound on $(\det \Im S^t[T_B])^{-1}$ and on the coefficients of $M_5$. Next, we deduce $\det \Im S^t[T_B] \leq \det M_5$ from Minkowski's determinant inequality (see \cite{MR1215484}, Chapter II, Theorem 4.1.8), since both $\Im S^t[T_B]$ and $M_5 - \Im S^t[T_B]$ are symmetric and positive semidefinite. From this follows an upper bound for $\det \Im S^t[T_B]$. Together with our bound on the coefficients of $M_4$, this readily gives a bound for the coefficients of $\Im S^t[T_B]$ and $\Re S^t[T_B]$ and thereby a bound for the coefficients of $S^t[T_B]$ in absolute value. Thus, we can apply Lemma 3.2 of \cite{MR3020307} to bound $H(R)$ in the required way.

We note that $T_{B}$ lies in the Siegel fundamental domain of a finite-index subgroup of $\Sp_{2g}(\mathbb{Z})$ and $(RS^t)[T_{B}]$ lies in the Siegel fundamental domain of $\Sp_{2g}(\mathbb{Z})$ itself. Therefore, $RS^t$ has to lie in a certain finite set which depends only on $F$ and $g$. Thus, we obtain a similar bound for $H(S)=H(R^{-1}RS^t)$ and thereby for $H(\Phi)=H(SP)$, since we have already bounded $H(P)$.
\end{proof}

In order to state the next corollary, we introduce the following notation that will also be used in the following sections: We write $f \preceq g$ for (positive) quantities $f$ and $g$, if there exist constants $c>0$ and $\kappa>0$, depending on $K$, $A_0$, $L_0$, $\tau_0$, $\Gamma$, $l$, $\mathcal{A}$, $\mathcal{L}$, $\mathcal{S}$, $\mathcal{C}$ and the choice of a Siegel fundamental domain for $G(l,2l)$ such that
\begin{equation}\label{eq:preceqdefn}
f \leq c\max\{1,g\}^{\kappa}.
\end{equation}
The choice of a Siegel fundamental domain for $G(l,2l)$ will be made implicitly in Proposition \ref{prop:uniformization}.

\begin{cor}\label{cor:choiceofisogeny}
Let $s \in \mathcal{S}$ such that $A_0$ and $\mathcal{A}_s$ are isogenous. Choose $\tau$ in a Siegel fundamental domain for the action of $G(l,2l)$ on $\mathbb{H}_g$ such that $\iota(\tau) = s$ with $\iota$ as in Proposition \ref{prop:universalfamily}. Then there exist an isogeny $\phi_s: A_0 \to \mathcal{A}_s$ of minimal degree (as defined before \eqref{eq:masserwuestholz}), a natural number $M \in \mathbb{N}$ and a matrix $\Phi \in \M_{2g}(\mathbb{Z})$ such that
\begin{enumerate}[label=(\roman*)]
\item $(\phi_s^{\ast}\mathcal{L}_s)^{\otimes M}Ê\otimes L_0^{\otimes(-1)}$ is ample and $M \preceq \deg \phi_s$.
\item $\Phi$ is the rational representation of $\phi_s$ with respect to the uniformizations $\exp_0$ and $\exp(\tau,\cdot)$ and the lattice bases given by $\Omega_\tau$ and $\Omega_{\tau_0}$, where $\exp$, $\exp_0$, $\Omega_\tau$ and $\Omega_{\tau_0}$ are defined as in Section \ref{sec:preliminaries}. It satisfies $H(\Phi) \preceq \deg \phi_s$.
\end{enumerate}
\end{cor}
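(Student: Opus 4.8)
The plan is to derive the corollary essentially by bookkeeping from Proposition \ref{prop:isogenyheightbounds}, applied with $A = A_0$, $B = \mathcal{A}_s$, $\Omega_A = \Omega_{\tau_0}$, $\Omega_B = \Omega_\tau$ and $\mathcal{M} = L_0$. The one subtlety is the choice of the line bundle $\mathcal{N}$ on $\mathcal{A}_s$: Proposition \ref{prop:isogenyheightbounds} wants a line bundle associated to the Hermitian form $(\Im\tau)^{-1}$, whereas $\mathcal{L}_s$ is, by Proposition \ref{prop:universalfamily}(vi), the $l$-th tensor power of such a line bundle. So I would first fix a symmetric ample line bundle $\mathcal{N}_s$ on $\mathcal{A}_s$ with $\mathcal{N}_s^{\otimes l} \cong \mathcal{L}_s$ and associated Hermitian form $(\Im\tau)^{-1}$ — the existence of such an $\mathcal{N}_s$ is exactly the content of Proposition \ref{prop:universalfamily}(vi) — and apply Proposition \ref{prop:isogenyheightbounds} with $\mathcal{N} = \mathcal{N}_s$; that $L_0$ is associated to $(\Im\tau_0)^{-1}$ and that $\tau_0$ lies in the Siegel fundamental domain is built into the normalizations of Section \ref{sec:preliminaries}.

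Next I would arrange that the Siegel fundamental domain in the hypothesis of Proposition \ref{prop:isogenyheightbounds} is the one for $G(l,2l)$ underlying the relation $\preceq$. Writing it as a union of finitely many translates $g_j F_0$ of the Siegel fundamental domain $F_0$ for $\Sp_{2g}(\mathbb{Z})$, with $g_1 = E_{2g}$ (Definition \ref{defn:siegelfundamentalset}(3)), we have $F_0 \subset \bigcup_j g_j F_0$, so both $\tau_0 \in F_0$ and $\tau$ (which lies in this domain by the statement of the corollary) belong to it. Then I would take $\phi: A_0 \to \mathcal{A}_s$ to be an isogeny of minimal degree — one exists because $A_0$ and $\mathcal{A}_s$ are isogenous — and feed it to Proposition \ref{prop:isogenyheightbounds}, obtaining an automorphism $\sigma$ of $A_0$, a natural number $n$ and a matrix $\Phi \in \M_{2g}(\mathbb{Z})$ with the asserted bounds in terms of $\deg\phi$, the implied constants depending only on that Siegel fundamental domain, $A_0$, $\Omega_{\tau_0}$ and $L_0$ — hence only on data permitted in $\preceq$. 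I would then set $\phi_s = \phi \circ \sigma$; since $\sigma$ is an automorphism, $\deg\phi_s = \deg\phi$, so $\phi_s$ is again of minimal degree, $\Phi$ is its rational representation with respect to $\exp_0$, $\exp(\tau,\cdot)$ and the bases $\Omega_{\tau_0}$, $\Omega_\tau$, and $n \preceq \deg\phi_s$, $H(\Phi) \preceq \deg\phi_s$; this is (ii).

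For (i) I would put $M = n$. Using $\phi_s^{\ast}\mathcal{L}_s = (\phi_s^{\ast}\mathcal{N}_s)^{\otimes l}$, one rewrites
\[ (\phi_s^{\ast}\mathcal{L}_s)^{\otimes M} \otimes L_0^{\otimes(-1)} = \bigl( (\phi_s^{\ast}\mathcal{N}_s)^{\otimes n} \otimes L_0^{\otimes(-1)} \bigr) \otimes (\phi_s^{\ast}\mathcal{N}_s)^{\otimes(l-1)n}. \]
The first factor is ample by Proposition \ref{prop:isogenyheightbounds}(i), and $\phi_s^{\ast}\mathcal{N}_s$ is ample, being the pullback of the ample bundle $\mathcal{N}_s$ along the finite morphism $\phi_s$; hence the second factor is nef and the product is ample. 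Since $M = n \preceq \deg\phi_s$, this gives (i). I do not expect a genuine obstacle here — all the real work sits inside Proposition \ref{prop:isogenyheightbounds} — the only points requiring attention being the passage between $\mathcal{L}_s$ and its $l$-th root $\mathcal{N}_s$ and the (easy) verification that a single admissible Siegel fundamental domain contains both $\tau_0$ and $\tau$, so that the constants produced by Proposition \ref{prop:isogenyheightbounds} are legitimate for the relation $\preceq$.
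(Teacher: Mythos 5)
Your proposal is correct and follows essentially the same route as the paper: apply Proposition \ref{prop:isogenyheightbounds} to a minimal-degree isogeny with $\mathcal{N}$ the $l$-th root of $\mathcal{L}_s$ from Proposition \ref{prop:universalfamily}(vi), set $\phi_s = \phi\circ\sigma$ and $M=n$, and absorb the discrepancy between $\mathcal{N}_s$ and $\mathcal{L}_s$ by tensoring the ample bundle from the proposition with further ample factors. The only (immaterial) difference is that the paper writes $(\phi_s^{\ast}\mathcal{L}_s)^{\otimes n}\otimes L_0^{\otimes(-1)}$ as the $l$-th power of the proposition's ample bundle tensored with $L_0^{\otimes(l-1)}$, whereas you tensor with $(\phi_s^{\ast}\mathcal{N}_s)^{\otimes(l-1)n}$.
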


\begin{proof}
Let $\phi$ be any isogeny of minimal degree between $A_0$ and $\mathcal{A}_s$. We apply Proposition \ref{prop:isogenyheightbounds} to $\phi$ with $A=A_0$, $B=\mathcal{A}_s$, $T_A = \tau_0$, $T_B = \tau$, $\mathcal{M}=L_0$ and $\mathcal{N}=\mathcal{L}_s'$, where $\mathcal{L}_s = (\mathcal{L}_s')^{\otimes l}$ by Proposition \ref{prop:universalfamily}(vi). Putting $\phi_s = \phi \circ \sigma$ yields what we want: Since $\sigma$ is an automorphism, we have $\deg \phi_s = \deg \phi$, so $\phi_s$ is of minimal degree. As $(\phi_s^{\ast}\mathcal{L}_s')^{\otimes n}Ê\otimes L_0^{\otimes(-1)}$ is ample, so is $(\phi_s^{\ast}\mathcal{L}_s')^{\otimes ln}Ê\otimes L_0^{\otimes(-l)} \otimes L_0^{\otimes(l-1)} = (\phi_s^{\ast}\mathcal{L}_s)^{\otimes n}Ê\otimes L_0^{\otimes(-1)}$ and thus we may take $M = n$. Note that $G(l,2l)$ has finite index in $\Sp_{2g}(\mathbb{Z})$ and that $\tau_0$ was already chosen in the Siegel fundamental domain for $\Sp_{2g}(\mathbb{Z})$. The implicit constants depend only on $A_0$, $L_0$, $\tau_0$ and the chosen Siegel fundamental domain, but are independent of $s$ and $\tau$.
\end{proof}

Finally, we record a lemma due to R\'{e}mond that allows us to bound the height of a basis of the lattice corresponding to an abelian subvariety of $A_0$ in terms of the degree of the abelian subvariety.
\begin{lem}\label{lem:degreeabeliansubvariety}
Let $B_0$ be an abelian subvariety of $A_0$ of codimension $k$ and denote by $\deg B_0$ its degree with respect to the ample line bundle $L_0$. Under the identification of $\mathbb{R}^{2g}$ with $\mathbb{C}^g$ given by $u \mapsto \Omega_{\tau_0}u$, there exists a matrix $HÊ\in \M_{2g \times 2(g-k)}(\mathbb{Z})$ such that $\exp_0^{-1}(B_0(\mathbb{C})) = \{Hy+z ; y \in \mathbb{R}^{2(g-k)}, z \in \mathbb{Z}^{2g}\}$, $\Omega_{\tau_0}H$ has rank equal to $g-k$ and $\lVert H \rVert \preceq \deg B_0$. Here, $\exp_0$ and $\Omega_{\tau_0}$ are defined as in Section \ref{sec:preliminaries}.
\end{lem}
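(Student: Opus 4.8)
The plan is to reduce the assertion, via Minkowski's theory of successive minima, to a bound on the Euclidean covolume of the period lattice $\Lambda_0$ of $B_0$ sitting inside $\mathbb{Z}^{2g}$; the only genuinely non-formal point will be that $\deg B_0$ controls this covolume.

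First I would set up coordinates. Working on $\mathbb{R}^{2g}$ via $u \mapsto \Omega_{\tau_0}u$, the period lattice of $A_0$ becomes $\mathbb{Z}^{2g}$ and $\exp_0$ becomes $\Omega_{\tau_0}$ followed by the quotient map $\mathbb{C}^g \to \mathbb{C}^g/\Omega_{\tau_0}\mathbb{Z}^{2g}$. Let $V_{B_0} \subset \mathbb{C}^g$ be the tangent space of $B_0$ at $0$, a complex subspace of dimension $g-k$, let $W = \Omega_{\tau_0}^{-1}(V_{B_0}) \subset \mathbb{R}^{2g}$, and $\Lambda_0 = W \cap \mathbb{Z}^{2g}$; then $\Lambda_0$ is a primitive sublattice of $\mathbb{Z}^{2g}$ of rank $2(g-k)$ (it is the period lattice of $B_0$), and, in these coordinates, $\exp_0^{-1}(B_0(\mathbb{C})) = V_{B_0} + \Omega_{\tau_0}\mathbb{Z}^{2g}$ corresponds to $W + \mathbb{Z}^{2g}$. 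Hence, if $h_1, \dots, h_{2(g-k)} \in \mathbb{Z}^{2g}$ is any $\mathbb{Z}$-basis of $\Lambda_0$ and $H = (h_1 \mid \dots \mid h_{2(g-k)}) \in \M_{2g \times 2(g-k)}(\mathbb{Z})$, then $\{Hy + z : y \in \mathbb{R}^{2(g-k)},\ z \in \mathbb{Z}^{2g}\} = W + \mathbb{Z}^{2g} = \exp_0^{-1}(B_0(\mathbb{C}))$ and $\Omega_{\tau_0}H$ has rank $\dim_{\mathbb{C}}V_{B_0} = g-k$. It then remains only to choose the basis so that $\lVert H \rVert \preceq \deg B_0$.

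The crucial estimate is on the covolume of $\Lambda_0$. Let $H_0$ be the Hermitian form on $\mathbb{C}^g$ attached to $L_0$ (with matrix $(\Im\tau_0)^{-1}$) and $E_0 = \Im H_0$; since $\tau_0$ lies in the chosen Siegel fundamental domain, the basis $\Omega_{\tau_0}$ is symplectic for $E_0$, so $E_0$ restricts to the standard unimodular alternating form on $\mathbb{Z}^{2g}$. As $L_0|_{B_0}$ is ample, $H_0|_{V_{B_0}}$ is positive definite, so $\Re H_0$ restricts to a positive definite quadratic form on $W$; by the elementary identity that the covolume of a full lattice in a positive definite Hermitian space, measured for the Euclidean form $\Re H_0$, equals the absolute Pfaffian of the imaginary part, one gets $\mathrm{covol}(\Lambda_0, \Re H_0) = \lvert\mathrm{Pf}(E_0|_{\Lambda_0})\rvert$. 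By Riemann--Roch on $B_0$ together with $h^0 = \mathrm{Pf}$ for ample line bundles, $\lvert\mathrm{Pf}(E_0|_{\Lambda_0})\rvert = \chi(L_0|_{B_0}) = \tfrac{1}{(g-k)!}(L_0|_{B_0})^{g-k} = \tfrac{1}{(g-k)!}\deg B_0$. Finally $\Re H_0$ is a fixed positive definite quadratic form on $\mathbb{R}^{2g}$ (it depends only on $\tau_0$), hence comparable to the standard one with constants among the fixed data, so $\mathrm{covol}(\Lambda_0) \preceq \deg B_0$ for the standard Euclidean structure.

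From here the argument is classical geometry of numbers. Since $\Lambda_0 \subseteq \mathbb{Z}^{2g}$, every nonzero lattice vector has Euclidean length $\geq 1$, so the successive minima $\lambda_1 \leq \dots \leq \lambda_{2(g-k)}$ of $\Lambda_0$ all satisfy $\lambda_i \geq 1$; Minkowski's second theorem then yields $\prod_i \lambda_i \preceq \mathrm{covol}(\Lambda_0) \preceq \deg B_0$, and combining with $\lambda_i \geq 1$ gives $\lambda_{2(g-k)} \leq \prod_i \lambda_i \preceq \deg B_0$. Taking for $h_1, \dots, h_{2(g-k)}$ a Minkowski-reduced basis of $\Lambda_0$, which satisfies $\lVert h_i \rVert \leq c(g)\lambda_i$, yields $\lVert H \rVert \preceq \deg B_0$, and the remaining assertions were verified above. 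I expect the main obstacle to be precisely the covolume estimate: converting the algebro-geometric invariant $\deg B_0$ into the analytic covolume of the period lattice requires the Riemann--Roch/Pfaffian dictionary, whereas everything afterwards is a routine application of Minkowski's theorems, made painless by the inclusion $\Lambda_0 \subseteq \mathbb{Z}^{2g}$, which forbids the successive minima from being spread far apart.
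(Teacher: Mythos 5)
Your proof is correct, and its logical skeleton is the same as the paper's: bound a ``global'' quantity attached to the period lattice $\Lambda_0$ of $B_0$ polynomially in $\deg B_0$, then use the fact that $\Lambda_0\subseteq\mathbb{Z}^{2g}$ forces every nonzero lattice vector to have length bounded below, so that no single basis vector can be large. The difference is in how that global bound is obtained. The paper simply invokes R\'emond's construction (Section 4 of his 2001 paper), which directly produces a basis $w_1,\dots,w_{2(g-k)}$ of integer vectors with the transference-type inequality $\lVert w_i\rVert\,\lVert w_{2(g-k)+1-i}\rVert\preceq\deg B_0$ for a Euclidean norm induced by $L_0$, and then concludes exactly as you do from the lower bound on $\mathbb{Z}^{2g}\setminus\{0\}$. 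You instead reprove the needed input from scratch: the identity $\mathrm{covol}(\Lambda_0,\Re H_0)=\lvert\mathrm{Pf}(E_0|_{\Lambda_0})\rvert$, Riemann--Roch on $B_0$ to convert the Pfaffian into $\deg B_0/(g-k)!$, Minkowski's second theorem to bound $\prod_i\lambda_i$ by the covolume, and a Minkowski-reduced basis to realize the successive minima. All of these steps are standard and correctly assembled (the comparison of $\Re H_0$ with the standard Euclidean structure is legitimately absorbed into the implied constants, which are allowed to depend on $\tau_0$ and $L_0$). What your route buys is self-containedness and transparency about where $\deg B_0$ enters (via the Pfaffian/Riemann--Roch dictionary); what the paper's route buys is brevity and, implicitly, the sharper paired inequality of R\'emond, which is not needed here since both arguments discard it in favour of the crude bound $\lambda_{2(g-k)}\leq\prod_i\lambda_i$.
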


\begin{proof}
We follow R\'{e}mond's construction in Section 4 of \cite{MR1804159}. We obtain a basis $w_i = \sum_{j=1}^{2g}{\lambda^{(i)}_j v_j}$ ($i=1,\hdots,2(g-k)$) of the connected component of $\exp_0^{-1}(B_0(\mathbb{C}))$ containing $0$ under the given identification of $\mathbb{R}^{2g}$ and $\mathbb{C}^g$. Here, $v_1,\hdots,v_{2g}$ is a suitable basis of $\mathbb{Z}^{2g}$ that is chosen depending on $L_0$, but independently of $B_0$, and the $\lambda^{(i)}_j$ are integers.

By an inequality on p. 531 of \cite{MR1804159}, we have
\[Ê\lVert w_i \rVert \lVert w_{2(g-k)+1-i}\rVert \preceq \deg B_0 \quad (i=1,\hdots,2(g-k)),\]
where $\lVert \cdot \rVert$ is a Euclidean norm on $\mathbb{R}^{2g}$ induced by $L_0$. This norm is bounded from below on $\mathbb{Z}^{2g}\backslash\{0\}$ by a positive constant that doesn't depend on $B_0$, which implies that
\[Ê\lVert w_i \rVert \preceq \deg B_0 \quad (i=1,\hdots,2(g-k)).\]

Since all norms on finite-dimensional real vector spaces are equivalent and $\lVert \cdot \rVert$ doesn't depend on $B_0$, it follows that $|\lambda_j^{(i)}| \preceq \deg B_0$ ($i=1,\hdots,2(g-k)$, $j = 1,\hdots,2g$). We deduce that the coordinates of $w_1, \hdots, w_{2(g-k)}$ with respect to the basis $v_1, \hdots, v_{2g}$ of $\mathbb{Z}^{2g}$ (which is not necessarily the standard one) are bounded. However, this basis is chosen independently of $B_0$ and so we obtain a comparable bound for the coordinates with respect to the standard basis.

We now take as $H$ the matrix with columns $w_1, \hdots, w_{2(g-k)}$. The columns of the matrix $\Omega_{\tau_0}H$ span the connected component of $\exp_0^{-1}(B_0(\mathbb{C}))$ containing $0$ seen as a $(g-k)$-dimensional vector subspace of $\mathbb{C}^g$ and so this matrix has rank equal to $g-k$.
\end{proof}

\section{Galois orbit bounds}\label{sec:galoisorbitbounds}

In this section, we show that virtually all occurring important quantities can be bounded polynomially in terms of $[K(p):K]$, where $p$ is a point in $\mathcal{A}^{[k]}_{\Gamma}\cap \mathcal{C}$ (reversing the direction of the inequalities leads to lower bounds for $[K(p):K]$ in terms of these other quantities -- hence the title ``Galois orbit bounds"). We will need two lemmata before we can prove the crucial Proposition \ref{prop:galoisbounds}. From now on, we will always take the isogeny given by Corollary \ref{cor:choiceofisogeny} as $\phi_s$. There might be some ambiguity in the choice of $\tau$ if it lies on the boundary of the Siegel fundamental domain for $G(l,2l)$, but this ambiguity doesn't change the construction in Proposition \ref{prop:isogenyheightbounds} -- which only depends on the principal polarization induced by $\mathcal{L}_s'$ and the data associated to $A_0$ -- and hence has no influence on $\phi_s$. Likewise, the implicit constants in the estimates are the same for any choice of $\tau$ in the Siegel fundamental domain.

\begin{lem}\label{lem:heightbound}
Let $s \in \mathcal{S}$ be such that $\mathcal{A}_s$ and $A_0$ are isogenous. Then there are constants $c_1$ and $c_2$, depending on $K$ and $A_0$, but independent of $s$ such that
\[ h_{\overline{\mathcal{S}}}(s) \leq c_1\log[K(s):K]+c_2.\]
\end{lem}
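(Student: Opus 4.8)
The plan is to combine the Masser--Wüstholz-type isogeny degree bound \eqref{eq:masserwuestholz} with a comparison between the height $h_{\overline{\mathcal{S}}}(s)$ and the Faltings height of the abelian variety $\mathcal{A}_s$, and then exploit that Faltings heights are essentially constant on isogeny classes. First I would recall that, by a theorem of Faltings (as made effective by work of e.g. Bost, Graftieaux, or Gaudron--Rémond), the Faltings height $h_{\mathrm{Fal}}(\mathcal{A}_s)$ of an abelian variety isogenous to $A_0$ over a number field differs from $h_{\mathrm{Fal}}(A_0)$ by at most a constant times the logarithm of the degree of the isogeny: concretely, $|h_{\mathrm{Fal}}(\mathcal{A}_s) - h_{\mathrm{Fal}}(A_0)| \leq \frac{1}{2}\log\deg\phi_s$. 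Plugging in \eqref{eq:masserwuestholz}, which gives $\deg\phi_s \leq c_{MW}[K(s):K]^{\kappa_{MW}}$, we obtain $h_{\mathrm{Fal}}(\mathcal{A}_s) \leq h_{\mathrm{Fal}}(A_0) + \frac{\kappa_{MW}}{2}\log[K(s):K] + \frac{1}{2}\log c_{MW}$, i.e. a bound of the shape $c_1'\log[K(s):K] + c_2'$ with $c_1', c_2'$ depending only on $A_0$ (and $K$).

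Next I would pass from the Faltings height back to the projective height $h_{\overline{\mathcal{S}}}(s)$ coming from the theta embedding of $A_{g,l}$. Since $s = \iota(\tau)$ for some $\tau$ in the Siegel fundamental domain and $\iota(\tau) = \phi(l\tau, 0)$ in the notation of Proposition \ref{prop:universalfamily}, the coordinates of $s$ are values of classical theta constants $\theta_c(l\tau, 0)$. Standard estimates on theta functions (as in, e.g., Pazuki's or David's comparison results, or the inequalities used by Orr and by Pazuki relating the stable Faltings height to the "theta height" on $A_{g,l}$) give a two-sided comparison of the form $h_{\overline{\mathcal{S}}}(s) \leq c_3 h_{\mathrm{Fal}}(\mathcal{A}_s) + c_4$ with absolute constants $c_3, c_4$ (depending on $g$ and $l$). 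Combining this with the previous paragraph yields $h_{\overline{\mathcal{S}}}(s) \leq c_1\log[K(s):K] + c_2$ with $c_1, c_2$ depending only on $K$ and $A_0$, as claimed.

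The main obstacle is the explicit comparison between $h_{\overline{\mathcal{S}}}(s)$ — the height attached to this particular theta embedding of the moduli space $A_{g,l}$ — and a standard invariant such as the stable Faltings height: one must control the archimedean contribution (the size of the theta constants $\theta_c(l\tau,0)$ for $\tau$ in the Siegel fundamental domain, which is bounded since $\Im\tau$ is Minkowski-reduced) as well as the non-archimedean places. Once one cites the correct form of such a comparison (this is exactly the kind of statement extracted in the literature on bounding heights of special points, e.g. in Orr's paper \cite{MR3377393} or Pazuki's work), the rest is the easy substitution above. An alternative, avoiding Faltings heights altogether, is to run Corollary \ref{cor:choiceofisogeny} directly: the matrix $\Phi$ there has $H(\Phi) \preceq \deg\phi_s$, and $\Phi^t[T_B] = \tau_0$ forces $\tau = T_B$ to lie in a region whose theta constants are polynomially bounded in $\deg\phi_s$, hence (taking logarithms and using \eqref{eq:masserwuestholz}) in $\log[K(s):K]$; I would likely present whichever of these two routes is shortest given the tools already set up in the paper.
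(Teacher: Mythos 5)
Your proposal is correct and follows essentially the same route as the paper: Faltings' isogeny lemma for the Faltings height, the Masser--W\"ustholz degree bound \eqref{eq:masserwuestholz}, and Pazuki's comparison between the theta height on $A_{g,l}$ and the stable Faltings height (including the caveat about matching the particular theta embedding used here). The only cosmetic difference is that the paper's cited comparison has a $\log\max\{h_{\overline{\mathcal{S}}}(s),1\}$ error term that must first be absorbed to reach the linear bound you assert directly.
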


\begin{proof}
We will use $c_1, c_2, \hdots$ for constants depending on $K$ and $A_0$, but independent of $s$. We will denote the stable Faltings height of $\mathcal{A}_s$ as defined in \cite{MR718935} by $h_F(\mathcal{A}_s)$.

By Faltings' Lemma 5 in \cite{MR718935}, we have
\begin{equation}\label{eq:faltingsestimate}
h_F(\mathcal{A}_s) \leq  h_F(A_0)+\frac{\log \deg \phi_s}{2}.
\end{equation}
By an inequality of Bost-David (Pazuki's Corollary 1.3 (1) in \cite{MR2903770}), we know that
\[\left| h_{\overline{\mathcal{S}}}(s) - \frac{1}{2}  h_F(\mathcal{A}_s)  \right| \leq c_3\log(\max\{h_{\overline{\mathcal{S}}}(s),1\})+c_4\]
for some constants $c_3$ and $c_4$, depending only on $g$ and $l$. Our choice of embedding of $A_{g,l}$ and $\mathfrak{A}_{g,l}$ into projective space through the use of Theta functions means that our $h_{\overline{\mathcal{S}}}(s)$ differs from the Theta height of $\mathcal{A}_s$ in Pazuki's work with $l=r^2$ only by an amount that is bounded independently of $s$: Pazuki uses another norm at the archimedean places for the definition of his height and he uses another coordinate system as he notes after his Definition 2.6, but by \cite{MR0325625}, p. 171, this coordinate system is related to ours by an invertible linear transformation with algebraic coefficients.

We deduce that
\begin{equation}\label{eq:j-faltings}
h_{\overline{\mathcal{S}}}(s) \leq c_5\max\{h_{F}(\mathcal{A}_s),1\}.
\end{equation}
Combining \eqref{eq:masserwuestholz}, \eqref{eq:faltingsestimate} and \eqref{eq:j-faltings}, we obtain that
\[ h_{\overline{\mathcal{S}}}(s) \leq c_1\log[K(s):K]+c_2\]
for some constants $c_1$ and $c_2$.
\end{proof}

\begin{lem}\label{lem:curveheight}
Let $p \in \mathcal{C}$ with $s = \pi(p) \in \mathcal{S}$ and suppose that $\mathcal{C}$ is not contained in $\mathcal{A}_s$. Then we have $\widehat{h}_s(p) \preceq h_{\overline{\mathcal{S}}}(s)$.
\end{lem}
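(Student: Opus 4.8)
The plan is to sandwich $\widehat{h}_s(p)$ between the naive fibral height $h_s(p)$ and the N\'{e}ron--Tate height, and to control $h_s(p)$ using the fact that $p$ lies on the \emph{fixed} curve $\mathcal{C}$, which projects finitely onto $\mathcal{S}$. So I would establish two estimates, both with implied constants independent of $s$: (a) $h_s(p) \preceq h_{\overline{\mathcal{S}}}(s)$; and (b) $\bigl|\widehat{h}_s(q) - h_s(q)\bigr| \leq c\bigl(1 + h_{\overline{\mathcal{S}}}(s)\bigr)$ for every $q \in \mathcal{A}_s$. Applying (b) with $q = p$ and adding (a) then gives $\widehat{h}_s(p) \preceq h_{\overline{\mathcal{S}}}(s)$.

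For (a): if $\pi|_{\mathcal{C}}$ were constant it would be constantly equal to $s$ (since $p \in \mathcal{C}$ and $\pi(p) = s$), forcing $\mathcal{C} \subset \mathcal{A}_s$, a contradiction; hence $\pi|_{\mathcal{C}}$ is non-constant. Its extension $f := \pi|_{\overline{\mathcal{C}}} \colon \overline{\mathcal{C}} \to \overline{A_{g,l}}$ is therefore a non-constant morphism of projective curves, and since $\pi(\mathcal{C}) \subset \mathcal{S}$ its image is a closed irreducible curve contained in $\overline{\mathcal{S}}$, hence equal to $\overline{\mathcal{S}}$; thus $f \colon \overline{\mathcal{C}} \to \overline{\mathcal{S}}$ is finite and surjective. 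By functoriality of Weil's height machine the $f^{\ast}L_{\overline{\mathcal{S}}}$-height of $p$ on $\overline{\mathcal{C}}$ equals $h_{\overline{\mathcal{S}}}(s) + O(1)$; as $\mathcal{L}|_{\overline{\mathcal{C}}}$ and $f^{\ast}L_{\overline{\mathcal{S}}}$ are both ample on the curve $\overline{\mathcal{C}}$, the height $h_{\overline{\mathcal{C}}}$ on $\overline{\mathcal{C}}$ induced by $\mathcal{L}$ satisfies $h_{\overline{\mathcal{C}}}(p) \leq c(1 + h_{\overline{\mathcal{S}}}(s))$. Finally $h_s$ and $h_{\overline{\mathcal{C}}}$ are both obtained by restricting the same theta embedding, so $h_s(p) \leq h_{\overline{\mathcal{C}}}(p) + O(1 + h_{\overline{\mathcal{S}}}(s))$ for $p \in \mathcal{A}_s \cap \overline{\mathcal{C}}$, whence (a).

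For (b): by Proposition \ref{prop:universalfamily}(vi) the line bundle $\mathcal{L}_s = \mathcal{L}|_{\mathcal{A}_s}$ is symmetric, so $[2]^{\ast}\mathcal{L}_s \cong \mathcal{L}_s^{\otimes 4}$ on $\mathcal{A}_s$ for every $s$, where $[2]$ denotes multiplication by $2$ in the abelian scheme. Hence $[2]^{\ast}\mathcal{L}$ and $\mathcal{L}^{\otimes 4}$ on $\mathcal{A} \to \mathcal{S}$ restrict to isomorphic line bundles on every fiber, and by the seesaw principle (using the zero section $\epsilon$ and the fact that $\pi$ is proper and flat with geometrically integral fibers) they differ only by the pullback $\pi^{\ast}N$ of a line bundle $N$ on $\mathcal{S}$. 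Passing to heights via Weil's machine and bounding the height of $N$ against the ample $L_{\overline{\mathcal{S}}}$ on $\overline{\mathcal{S}}$ shows that the \emph{a priori} $s$-dependent defect satisfies
\[ \bigl| h_s([2]q) - 4\,h_s(q) \bigr| \leq c\bigl(1 + h_{\overline{\mathcal{S}}}(s)\bigr) \qquad (q \in \mathcal{A}_s), \]
with $c$ independent of $s$ and $q$. Since $\widehat{h}_s(q) = \lim_{n \to \infty} 4^{-n} h_s([2^n]q)$, telescoping this relation (Tate's argument) yields $\bigl|\widehat{h}_s(q) - h_s(q)\bigr| \leq \tfrac{c}{3}(1 + h_{\overline{\mathcal{S}}}(s))$, which is (b).

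The main obstacle is (b): the defect in $[2]^{\ast}\mathcal{L}_s \cong \mathcal{L}_s^{\otimes 4}$ on a single fiber is merely an $O(1)$ whose constant could a priori depend badly on $s$, and one has to pass through the global geometry of the abelian scheme $\mathcal{A} \to \mathcal{S}$ to see that it is in fact $\preceq h_{\overline{\mathcal{S}}}(s)$ uniformly in $s$. Alternatively one could invoke a ready-made uniform comparison of naive and canonical heights in a family of abelian varieties, but the seesaw-plus-telescoping route keeps the argument self-contained.
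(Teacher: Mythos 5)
Your step (a) is essentially the paper's own argument: $\pi|_{\overline{\mathcal{C}}}$ is finite onto $\overline{\mathcal{S}}$, so $\pi^{\ast}L_{\overline{\mathcal{S}}}$ is ample on the projective curve $\overline{\mathcal{C}}$ and a comparison of ample heights there gives $h_s(p) \preceq h_{\overline{\mathcal{S}}}(s)$. The gap is in step (b). The seesaw isomorphism $[2]^{\ast}\mathcal{L} \cong \mathcal{L}^{\otimes 4} \otimes \pi^{\ast}N$ is indeed valid on the abelian scheme $\mathcal{A}$, but $\mathcal{A}$ is only quasi-projective, and Weil's height machine attaches heights (well-defined up to $O(1)$) only to line bundles on projective varieties. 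A line-bundle isomorphism on the open variety $\mathcal{A}$ gives no control on the difference $h_s([2]q)-4h_s(q)-h_N(s)$ as $q$ ranges over $\mathcal{A}$: the heights $h_s$ are defined via the closure $\overline{\mathfrak{A}_{g,l}}$, so to convert the isomorphism into a height identity one must extend $\mathcal{L}$, the morphism $[2]$ and $\pi^{\ast}N$ to a projective compactification and control the boundary. This is exactly the content of Silverman's Theorem A, and the paper explicitly explains why it cannot be quoted off the shelf here: the compactifications $\overline{\mathfrak{A}_{g,l}}$ and $\overline{A_{g,l}}$ obtained as Zariski closures of the theta embeddings are in general highly singular, so Silverman's Weil-divisor argument breaks down; one would need either Pink's smooth compactification of the universal family or a reworking with Cartier divisors. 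Your phrase ``passing to heights via Weil's machine'' conceals precisely this difficulty, so the claim that the seesaw-plus-telescoping route is self-contained is not justified as written.

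The paper circumvents the issue by an explicit computation in theta coordinates: Lemma 3.4 of Masser--W\"ustholz provides polynomials $P_{i,j}$ with integer coefficients, homogeneous of degree $2(l^{8g}-1)$ in the projective coordinates of $s$ and of degree $4$ in those of $q$, whose values are the projective coordinates of $2q$ in $\mathcal{A}_s$ (for a suitable $j$). The triangle inequality then gives $h_s(2q) \leq 4h_s(q) + 2(l^{8g}-1)h_{\overline{\mathcal{S}}}(s) + c$ with $c$ absolute, and telescoping yields the one-sided bound $\widehat{h}_s(q) \leq h_s(q) + \tfrac{1}{3}\left(2(l^{8g}-1)h_{\overline{\mathcal{S}}}(s)+c\right)$, which together with (a) suffices. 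To rescue your version of (b) you would have to supply the missing compactification/extension step, or replace it by such an explicit duplication formula.
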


Our proof even yields a bound that is linear in $h_{\overline{\mathcal{S}}}(s)$, but a polynomial bound will suffice for our purposes. We note that it is crucial for this lemma that $\mathcal{C}$ is a curve and not a subvariety of $\mathcal{A}$ of higher dimension. Indeed, the main obstacle that one encounters attempting to generalize Theorem \ref{thm:main} to higher-dimensional subvarieties $\mathcal{V} \subset \mathcal{A}$ which dominate the base is the lack of such a height bound for (a large enough subset of) the points in $\mathcal{A}_\Gamma \cap \mathcal{V}$.

\begin{proof}
We use $c_6, \hdots$ for constants that depend only on $\mathcal{A}$ and $\mathcal{C}$. Let for the moment $s \in \mathcal{S}$ and $p \in \mathcal{A}_s$ be arbitrary. We will first bound $\widehat{h}_s(p)$ in terms of $h_s(p)$ and $h_{\overline{\mathcal{S}}}(s)$. It would be possible to use Silverman's Theorem A in \cite{MR703488} for this; there is however the problem that $\overline{\mathfrak{A}_{g,l}}$ and $\overline{A_{g,l}}$ are usually not smooth, so one would either need to construct a more sophisticated (i.e. smooth) compactification of the universal family (this was achieved by Pink in his dissertation \cite{MR1128753}) or adapt Silverman's proof by using Cartier instead of Weil divisors.

Another, more elementary way is to use Lemma 3.4 of \cite{MR1207211}. It is shown in that lemma that there exists a family of polynomials $P_{i,j}$ ($i=0,\hdots,l^g-1$, $j=1,\hdots,J$)  in the projective coordinates of $s \in \mathcal{S}$ and $p \in \mathcal{A}_s \subset \mathbb{P}^{l^g-1}$ with the following properties: Every $P_{i,j}$ is a polynomial with integer coefficients, homogeneous of degree $2(l^{8g}-1)$ in the coordinates of $s$ and homogeneous of degree $4$ in the coordinates of $p$. For every $s \in \mathcal{S}$ and $p \in \mathcal{A}_s$ and every $j \in \{1,\hdots,J\}$, the $P_{i,j}(s,p)$ ($i=0,\hdots,l^g-1$) are either all zero or they are the projective coordinates of $2p$ in $\mathcal{A}_s \subset \mathbb{P}^{l^g-1}$ (by abuse of notation, $P_{i,j}(s,p)$ denotes $P_{i,j}$ evaluated at the projective coordinates of $s$ and $p$). Furthermore, there exists $j \in \{1,\hdots,J\}$, depending on $s$ and $p$, such that not all $P_{i,j}(s,p)$ ($i=0,\hdots,l^g-1$) are zero.

Fixing $j \in \{1,\hdots,J\}$ and following the proof of Theorem B.2.5(a) in \cite{MR1745599} (which amounts to the triangle inequality), we get a bound of the form
\[ h_s(2p) \leq 4h_s(p) + 2(l^{8g}-1) h_{\overline{\mathcal{S}}}(s)+c_6,\]
where $c_6$ depends only on $l$, $g$ and the (integral) coefficients of the $P_{i,j}$, but is independent of $s$ and $p$. The bound is valid for those $s$ and $p$, where not all $P_{i,j}(s,p)$ ($i=0,\hdots,l^g-1$) are zero. After reiterating the process for every $j \in \{1,\hdots,J\}$ and adjusting the constants if necessary, we can assume that the inequality holds for all $s \in \mathcal{S}$ and $p \in \mathcal{A}_s$. We then obtain easily from $\widehat{h}_s(p)=\lim_{n \to \infty}{\frac{h_s(2^n p)}{4^n}}$ that
\[ \widehat{h}_s(p) \leq h_s(p) + \frac{2(l^{8g}-1) h_{\overline{\mathcal{S}}}(s)+c_6}{3},\]
where we used that $\sum_{n=1}^{\infty}{4^{-n}}=\frac{1}{3}$.

Let now $p$ be a point of $\mathcal{C}$ as in the lemma. In view of the above inequality, it suffices to show that $h_s(p) \preceq h_{\overline{\mathcal{S}}}(s)$. Since $\mathcal{C}$ is irreducible and not contained in $\mathcal{A}_s$, the morphism $\pi|_{\overline{\mathcal{C}}}: \overline{\mathcal{C}} \to \overline{\mathcal{S}}$ is quasi-finite. It is also proper, hence finite. 
Therefore, the pullback $\pi^{\ast} L_{\overline{\mathcal{S}}}$ of the ample line bundle $L_{\overline{\mathcal{S}}}$ is also ample.

On the other hand, the closed immersion $\iota: \overline{\mathcal{C}} \hookrightarrow \overline{\mathfrak{A}_{g,l}}$ yields a very ample line bundle $\iota^{\ast} \mathcal{L}$ on $\overline{\mathcal{C}}$. It follows from the ampleness of $\pi^{\ast} L_{\overline{\mathcal{S}}}$ that there exists some natural number $NÊ\in \mathbb{N}$ such that $\pi^{\ast} L_{\overline{\mathcal{S}}}^{\otimes N} \otimes \iota^{\ast} \mathcal{L}^{\otimes (-1)}$ is ample.

If we choose associated heights $h_{\overline{\mathcal{C}},\iota^{\ast}\mathcal{L}}$ and $h_{\overline{\mathcal{C}},\pi^{\ast} L_{\overline{\mathcal{S}}}}$, it now follows from fundamental properties of the Weil height that
\[Êh_{\overline{\mathcal{C}},\iota^{\ast}\mathcal{L}}(p) \leq Nh_{\overline{\mathcal{C}},\pi^{\ast} L_{\overline{\mathcal{S}}}}(p)+c_7\]
and then by functoriality that
\[Êh_s(p) \leq Nh_{\overline{\mathcal{S}}}(s)+c_8,\]
whence the lemma follows.
\end{proof}

The next proposition bounds all important quantities in terms of $[K(p):K]$ alone, where $p$ is some point in $\mathcal{A}^{[k]}_{\Gamma} \cap \mathcal{C}$.

\begin{prop}\label{prop:galoisbounds}
Let $s \in \mathcal{S}$ be such that $\mathcal{A}_s$ and $A_0$ are isogenous and $p \in \mathcal{C} \cap \phi_s(\Gamma + B_0)$ for some abelian subvariety $B_0$ of $A_0$. Suppose that $\pi(\mathcal{C}) = \mathcal{S}$. Then there exist $\gamma \in \Gamma$, an abelian subvariety $B_1 \subset B_0$ and $b \in B_1$ with the following properties: If we choose $N \in \mathbb{N}$ minimal with $N\gamma=\sum_{i=1}^{r}{a_i\gamma_i} \in \mathbb{Z}\gamma_1+\hdots+\mathbb{Z}\gamma_r$ and if $\deg B_1$ denotes the degree of $B_1$ with respect to the ample line bundle $L_0$, then we have $p = \phi_s(\gamma + b)$ and
\begin{enumerate}[label=(\roman*)]
\item $\deg \phi_s \preceq [K(p):K]$,
\item $\deg B_1 \preceq [K(p):K]$,
\item $\max\{\left|a_1\right|,\hdots,\left|a_r\right|,N\} \preceq [K(p):K].$
\end{enumerate}
\end{prop}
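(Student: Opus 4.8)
The plan is to follow the scheme sketched in the introduction. Part~(i) is immediate: since $s=\pi(p)$ and $\pi$ is defined over $K$, one has $[K(s):K]\le[K(p):K]$, so \eqref{eq:masserwuestholz} gives $\deg\phi_s\le c_{MW}[K(s):K]^{\kappa_{MW}}\preceq[K(p):K]$. Next I would fix a preimage $q\in\Gamma+B_0$ of $p$ under $\phi_s$ and bound its canonical height. As $q$ is one of the $\deg\phi_s$ preimages of $p$, $[K(q):K]\le(\deg\phi_s)[K(p):K]\preceq[K(p):K]$. Since $\pi(\mathcal C)=\mathcal S$, the curve $\mathcal C$ is contained in no fibre $\mathcal A_s$, so Lemma~\ref{lem:curveheight} gives $\widehat h_s(p)\preceq h_{\overline{\mathcal S}}(s)$, while Lemma~\ref{lem:heightbound} gives $h_{\overline{\mathcal S}}(s)\preceq\log[K(p):K]\preceq[K(p):K]$. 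By Corollary~\ref{cor:choiceofisogeny}(i) the line bundle $(\phi_s^{\ast}\mathcal L_s)^{\otimes M}\otimes L_0^{\otimes(-1)}$ is ample and symmetric with $M\preceq\deg\phi_s$; since the canonical height attached to an ample symmetric line bundle is nonnegative, $\widehat h_{A_0}(q)=\widehat h_{A_0,L_0}(q)\le M\,\widehat h_{A_0,\phi_s^{\ast}\mathcal L_s}(q)=M\,\widehat h_s(\phi_s(q))=M\,\widehat h_s(p)\preceq[K(p):K]$.

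Then I would apply the proposition of Habegger--Pila from \cite{MR3552014} to the point $(q,\gamma_1,\dots,\gamma_r)\in A_0^{r+1}$, which by the previous step has canonical height $\preceq[K(p):K]$ (the $\widehat h_{A_0}(\gamma_i)$ being constants) and degree $[K(q,\gamma_1,\dots,\gamma_r):K]=[K(q):K]\preceq[K(p):K]$ over $K$ (recall $\gamma_i\in A_0(K)$). This produces a translate $t+G$ of an abelian subvariety $G\subseteq A_0^{r+1}$ by a torsion point, minimal among those containing the point, with $\deg G\preceq[K(p):K]$. Feeding this together with the membership $q\in\Gamma+B_0$ into a lemma of R\'emond (the circle of ideas of \cite{MR2311666},\cite{MR1804159} already underlying Lemma~\ref{lem:degreeabeliansubvariety}), I would obtain $\gamma\in\Gamma$, an abelian subvariety $B_1\subseteq B_0$ and $b\in B_1$ with $q=\gamma+b$, $\deg B_1\preceq\deg G\preceq[K(p):K]$ --- which is~(ii) --- together with $\widehat h_{A_0}(\gamma)\preceq[K(p):K]$ and $[K(\gamma):K]\preceq[K(p):K]$ (the latter because $\gamma$ is built from the Galois-stable datum $q$ and the canonical, hence $K(q)$-rational, minimal translate).

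It remains to prove (iii). Let $N$ be minimal with $N\gamma=\sum_i a_i\gamma_i$. If $\gamma$ is torsion then $N=\ord(\gamma)\preceq[K(\gamma):K]^{\kappa}\preceq[K(p):K]$ by the classical lower bounds for fields of definition of torsion points on abelian varieties, and one takes all $a_i=0$; so assume $\gamma$ is non-torsion and write $D:=[K(p):K]$. The point $(\gamma,\gamma_1,\dots,\gamma_r)=(q,\gamma_1,\dots,\gamma_r)-(b,0,\dots,0)$ lies in a torsion translate of an abelian subvariety of $A_0^{r+1}$ of degree $\preceq\deg G\cdot\deg B_1\preceq D$ and is defined over a field of degree $[K(\gamma):K]\preceq D$ over $K$; a lemma of Habegger--Pila supplies the a priori bound that the relevant torsion order is $\preceq D$. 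One then combines this with a Dirichlet-type Diophantine approximation: for a parameter $Q$ one finds $1\le\ell\le Q^{r}$ and integers $p_i$ with $\lVert\ell(a_i/N)-p_i\rVert\le 1/Q$, so that, by equivalence of the positive-definite quadratic form $\widehat h_{A_0}$ on $\bigoplus_i\mathbb Q\gamma_i$ with the sup-norm, $E:=\ell\gamma-\sum_i p_i\gamma_i$ satisfies $\widehat h_{A_0}(E)\preceq 1/Q^{2}$ while $[K(E):K]\le[K(\gamma):K]\preceq D$ (using $\gamma_i\in A_0(K)$); by Masser's lower bounds for the canonical height a suitable $Q$ that is a fixed power of $D$ --- hence with $\ell\preceq D$ --- forces $E$ to be torsion of order $\preceq D$, whence $\bigl(\ord(E)\,\ell\bigr)\gamma=\sum_i\bigl(\ord(E)\,p_i\bigr)\gamma_i\in\bigoplus_i\mathbb Z\gamma_i$ with $\ord(E)\,\ell\preceq D$, so $N\preceq D$ by minimality of $N$; finally $\max_i|a_i|\le N\sqrt{\widehat h_{A_0}(\gamma)/c}\preceq D$, where $c>0$ is the constant from the norm equivalence.

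I expect the last step to be the main obstacle: the denominator $N$ is genuinely \emph{not} controlled by $\widehat h_{A_0}(\gamma)$ alone (for instance $\tfrac1N\gamma_1$ has canonical height tending to $0$ as $N\to\infty$), so one must first extract an a priori bound --- here from the minimality of the Habegger--Pila translate together with $[K(\gamma):K]\preceq D$ --- against which the Diophantine approximation can be calibrated, and only then do Masser's lower bounds turn the approximate relation into an exact one. Securing $[K(\gamma):K]\preceq D$ and the containment $B_1\subseteq B_0$ in the R\'emond step, and keeping every ``degree versus denominator'' estimate free of circularity, is where the real care is needed; the remaining ingredients are routine applications of Weil's height machine and of the results cited above.
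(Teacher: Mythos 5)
Your overall route coincides with the paper's: (i) from Gaudron--R\'emond/Masser--W\"ustholz, the height bound $\widehat h_{A_0}(q)\preceq[K(p):K]$ via Corollary \ref{cor:choiceofisogeny}(i) together with Lemmata \ref{lem:heightbound} and \ref{lem:curveheight}, then Proposition 9.1 of Habegger--Pila for the minimal (torsion translate of an) abelian subvariety through $(q,\gamma_1,\dots,\gamma_r)$, R\'emond's Lemme 6.1 for the decomposition $q=\gamma+b$, and finally Dirichlet approximation plus Masser's lower bounds for $N$. Two remarks on the early steps: the estimate $[K(q):K]\preceq[K(p):K]$ needs, besides counting the fibre $\phi_s^{-1}(p)$, a bound on the field of definition of $\phi_s$ over $K(s)$ (the paper invokes R\'emond's bound $\eta(g)$ here); and the containment $B_1\subseteq B_0$ is not a direct output of ``a lemma of R\'emond'' but of a separate Poincar\'e-reducibility argument showing that the connected component of $B\cap(A_0\times\{0\}^r)$ sits inside $B_0\times\{0\}^r$ and that $q\in\Gamma+\pi_1(B_2)$. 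Both are repairable.

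The genuine gap is in (iii). You apply Masser's lower height bound to $E=\ell\gamma-\sum_i p_i\gamma_i$, which requires controlling $[K(E):K]$, and for this you assert $[K(\gamma):K]\preceq[K(p):K]$ ``because $\gamma$ is built from the Galois-stable datum $q$''. This is not justified: R\'emond's decomposition $q=\gamma+b$ is in no sense Galois-equivariant, and since $N\gamma\in\bigoplus_i\mathbb Z\gamma_i$ means $\gamma$ is an $N$-division point of a $K$-rational point, bounding $[K(\gamma):K]$ is essentially \emph{equivalent} to bounding $N$ --- the very quantity at stake --- so the argument is circular. The point you can control the degree of is $\ell q-\sum_i p_i\gamma_i$, but that differs from $E$ by $\ell b$ and so does not have small height. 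The paper resolves exactly this tension by first producing (via Lemma 9.5 of Habegger--Pila) a surjection $\alpha:A_0^{r+1}\to A$ of controlled norm whose kernel component is $B$, hence contains $B_1\times\{0\}^r$: then $\alpha(nq-\sum_i a_i\gamma_i,0,\dots,0)=\alpha(n\gamma-\sum_i a_i\gamma_i,0,\dots,0)$ has small height (computed via $\gamma$) \emph{and} is visibly defined over $K(q)$ (computed via $q$), so Masser's theorem applies to it on the fixed quotient $A$. Without this projection step --- or some equivalent device for killing the $B_1$-component before invoking the height lower bound --- your calibration of the Diophantine approximation against Masser's theorem does not go through.
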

\begin{proof}
Part (i) is just a restatement of \eqref{eq:masserwuestholz}, where we take into account that $[K(p):K] \geq [K(s):K]$. We have $p = \phi_s(q)$ for some $q \in \Gamma + B_0$. It follows from Corollary \ref{cor:choiceofisogeny} that there exists $M \in \mathbb{N}$ such that $(\phi_{s}^{\ast}\mathcal{L}_s)^{\otimes M} \otimes L_0^{\otimes (-1)}$ is ample and $M \preceq \deg \phi_s \preceq [K(p):K]$. Therefore
\[Ê\widehat{h}_{A_0}(q) \leq M\widehat{h}_{\phi_{s}^{\ast}(\mathcal{L}_s)}(q)=M\widehat{h}_s(\phi_s(q)) = M\widehat{h}_s(p),\]
which implies together with Lemma \ref{lem:heightbound} and Lemma \ref{lem:curveheight} that $\widehat{h}_{A_0}(q) \preceq [K(p):K]$.

We note that $q$ is defined over a field extension of $K(p)$ of degree at most $\eta(g)\deg \phi_s$ for a certain function $\eta: \mathbb{N} \to \mathbb{N}$, since $\phi_s$ is defined over a field extension of $K(s) \subset K(p)$ of degree at most $\eta(g)$ by R\'{e}mond's Th\'{e}or\`{e}me 1.2 in \cite{R17} and $q$ has degree at most $\deg \phi_s$ over the compositum of $K(p)$ and the field of definition of $\phi_s$, since all its Galois conjugates over that field lie in $\phi_s^{-1}(p)$ and this fiber has $\deg \phi_s$ elements. Here, R\'{e}mond has obtained the best possible $\eta$, while the fact that the bound depends only on $g$ goes back to Silverberg in \cite{MR1154704} and Masser-W\"ustholz in \cite{MR1207211}, Lemma 2.1. Hence, we have $[K(q):K] \preceq [K(p):K]$.

Consider the point $\tilde{q} = (q,\gamma_1,\hdots,\gamma_r) \in A_0^{r+1}$. Let $B$ be the smallest abelian subvariety of $A_0^{r+1}$ such that a multiple $\mu\tilde{q}$ of $\tilde{q}$ lies inside $B$ ($\muÊ\in \mathbb{N}$). By Proposition 9.1 of \cite{MR3552014}, we have $\deg B \preceq \max\{\widehat{h}_{A_0}(q),[K(q):K]\}Ê\preceq [K(p):K]$. Here, $\deg B$ denotes the degree of $B$ with respect to the line bundle $\pi_1^{\ast} L_0 \otimes \cdots \otimes \pi_{r+1}^{\ast}L_0$, where $\pi_i: A_0^{r+1} \to A_0$ is the projection to the $i$-th factor ($i=1,\hdots,r+1$).

Let $\Omega$ be a finite set of abelian varieties over $\bar{\mathbb{Q}}$ such that every quotient $A_0^{r+1}/H$ for some abelian subvariety $H$ of $A_0^{r+1}$ is isogenous over $\bar{\mathbb{Q}}$ to some element of $\Omega$. For each $A' \in \Omega$ we can fix some norm $\lVertÊ\cdotÊ\rVert_{A'}$ on $\Hom(A_0^{r+1},A') \otimesÊ\mathbb{R}$ and a symmetric ample line bundle on $A'$ to obtain a canonical height $\widehat{h}_{A'}$ on $A'$. After passing to a finite field extension, we can assume that all $A' \in \Omega$, all these line bundles as well as all elements of $\Hom(A_0^{r+1},A')$ for all $A' \in \Omega$ are defined over $K$.

Going through the proof of Proposition 9.1 in \cite{MR3552014}, we see that $B$ is obtained as the irreducible component of $\ker \alpha$ containing the neutral element for a surjective homomorphism $\alpha: A_0^{r+1} \to A$ for some $AÊ\in \Omega$. If we write $\lVert \cdotÊ\rVert = \lVert \cdot \rVert_{A}$, then we even obtain from Lemma 9.5 of \cite{MR3552014} a surjective homomorphism $\alpha: A_0^{r+1} \to A$ such that $B$ is the irreducible component of $\ker \alpha$ containing the neutral element and $\lVert\alpha\rVert \preceq [K(p):K]$.

We have a projection morphism $\psi: B \to A_0^r$ given by omitting the first coordinate. We let $B' = \psi(B) \subset A_0^r$ and let $B_2$ be the connected component of $\kerÊ\psi = B \cap (A_0 \times \{0\}^{r}) \subset B$ containing the neutral element. Since $q \in \Gamma + B_0$, it follows that $B_2 \subset B_0 \times \{0\}^r$. By Poincar\'{e}'s reducibility theorem, there exists an abelian subvariety $B_3 \subset B$ such that the restriction of the natural addition morphism $B_2 \times B_3 \to B$ is an isogeny. It follows that $\psi|_{B_3}: B_3 \to B'$ must be an isogeny. As usual, there exists an isogeny $\chi: B' \to B_3$ such that $\chi \circ \psi|_{B_3}$ is multiplication by $\deg \psi|_{B_3}$ on $B_3$.

Since $\psi|_{B_3}: B_3 \to B'$ is surjective, we can choose $uÊ\in B_3$ such that $\psi(u) = \mu(\gamma_1,\hdots,\gamma_r)$. Applying Poincar\'{e}'s reducibility theorem again, we find an abelian subvariety $B'' \subset A_0^r$ such that the restriction of the natural addition morphism $B' \times B'' \to A_0^r$ is an isogeny. Again, we get an isogeny $\rho: A_0^r \to B'Ê\times B''$ in the other direction such that their composition is multiplication by a scalar. By projecting to the first coordinate, we obtain $\rho': A_0^r \to B'$. Let $w \in A_0^r$ with $\rho(w) = (\mu(\gamma_1,\hdots,\gamma_r),0)$. It follows that $\mu(\gamma_1,\hdots,\gamma_r)$ is some multiple of $w$ and hence $wÊ\in \Gamma^r$. We have $(\deg \psi|_{B_3})u = \chi(\psi(u)) = \chi(\mu(\gamma_1,\hdots,\gamma_r)) = (\chi \circÊ\rho')(w)$. As $\chi \circÊ\rho': A_0^r \to B_3 \hookrightarrow A_0^{r+1}$, $\Gamma$ is stable under $\End(A_0)$ and $w \in \Gamma^r$, we deduce that $u \in \Gamma^{r+1}$.

It follows from $\psi(u) = \mu(\gamma_1,\hdots,\gamma_r)$ that $\mu(q,\gamma_1,\hdots,\gamma_r) \in u + \kerÊ\psi \subset u + (A_0)^{r+1}_{\tors}+B_2$ and by considering only the first coordinate we see that $\mu q \in \pi_1(u) +Ê(A_0)_{\tors} + \pi_1(B_2) \subset \Gamma + \pi_1(B_2)$ and hence $qÊ\in \Gamma + \pi_1(B_2)$. Now, $B_1 = \pi_1(B_2)$ is an abelian subvariety of $A_0$ of degree $\degÊB_1 = \deg B_2$ with respect to $L_0$. Since $B_2$ is an irreducible component of $B \cap (A_0 \times \{0\}^r)$, we know that $\deg B_2 \preceq \deg B$ by Proposition 3.1 of \cite{R18}. We also know that $B_2 = B_1 \timesÊ\{0\}^{r}$ and so $B_1 \subset B_0$, since $B_2 \subset B_0 \times \{0\}^r$. This proves (ii).

Since $\Hom(A_0^{r+1},A)$ is a finitely generated $\mathbb{Z}$-module and the height is quadratic, there exists a constant $c_0$, depending only on the two abelian varieties and the choices of symmetric ample line bundles as well as the choice of the norm, such that $\widehat{h}_{A}(\alpha'(x)) \leq c_0\lVert\alpha'\rVert^2\sum_{i=1}^{r+1}\widehat{h}_{A_0}(x_i)$ for all $\alpha' \in \Hom(A_0^{r+1},A)$ and all $x = (x_1,\hdots,x_{r+1}) \in A_0^{r+1}$. In particular, this bound holds for our $\alpha$ as chosen above.

We apply R\'{e}mond's Lemme 6.1 in \cite{MR2311666} to choose $\gamma' \in \Gamma$ and $b' \in B_1$ such that $q = \gamma' + b'$ and $\widehat{h}_{A_0}(\gamma') \preceq \widehat{h}_{A_0}(q) \preceq [K(p):K]$. Note that we have assumed $\Gamma = \Gamma_{sat}$ in R\'{e}mond's notation and that R\'{e}mond's Lemme also holds for $\epsilon = 0$ as is the case here. Suppose that $m\gamma' = m_1\gamma_1+\hdots+m_r\gamma_r$ with $mÊ\in \mathbb{N}$, $m_1, \hdots, m_rÊ\in \mathbb{Z}$. Since $\widehat{h}_{A_0}(\gamma') \preceq [K(p):K]$, $\widehat{h}_{A_0}$ extends to a norm on $\Gamma \otimes \mathbb{R}$ and all norms on the finite-dimensional $\mathbb{R}$-vector space $\Gamma \otimes \mathbb{R}$ are equivalent, we also have that $\max_{i=1,\hdots,r}\frac{|m_i|}{m} \preceq [K(p):K]$.

For given $N \in \mathbb{N}$, we can find $n \leq N$ and $a_1, \hdots, a_r \in \mathbb{Z}$ such that $\max_{i=1,\hdots,r}\left|a_i-\frac{nm_i}{m}\right|Ê\leq \lfloor N^{\frac{1}{r}}\rfloor^{-1}$. It follows that
\begin{align*}
\widehat{h}_{A}\left(\alpha\left(nq-\sum_{i=1}^{r}{a_i\gamma_i},0,\hdots,0\right)\right) = \\
\widehat{h}_{A}\left(\alpha\left(n\gamma'-\sum_{i=1}^{r}{a_i\gamma_i},0,\hdots,0\right)\right) \leq c_9\lVert\alpha\rVert^2 N^{-\frac{2}{r}},
\end{align*}
where the constant $c_9$ depends only on $A_0$, $L_0$, $A$, the choice of symmetric ample line bundle on $A$ as well as of the norm $\lVert \cdotÊ\rVert$ on $\Hom(A_0^{r+1},A) \otimes \mathbb{R}$ and $\gamma_1,\hdots,\gamma_r$. Since $\alpha\left(nq-\sum_{i=1}^{r}{a_i\gamma_i},0,\hdots,0\right)$ is defined over $K(q)$, it follows from a theorem of Masser (\cite{MR766295}, p. 154) that it must be a torsion point the order $s$ of which is polynomially bounded in $[K(p):K]$ as soon as $N$ exceeds some bound that is polynomial in $[K(p):K]$ (recall that $\lVert\alpha\rVert \preceqÊ[K(p):K]$).

So we may choose $NÊ\preceqÊ[K(p):K]$ and $a_1, \hdots, a_r \in \mathbb{Z}$ such that $(sNq - \sum_{i=1}^{r}{sa_i\gamma_i},0,\hdots,0) \in \kerÊ\alpha$ and therefore $Nq - \sum_{i=1}^{r}{a_i\gamma_i} \in t + B_1$ for a torsion point $t$. Furthermore, the $a_i$ satisfy by construction $|a_i| \leq \frac{N|m_i|}{m}+1Ê\preceqÊ[K(p):K]$. Applying Proposition 9.1 of \cite{MR3552014} again, we see that $t$ can be chosen such that its order is polynomially bounded in $[K(p):K]$. This proves (iii) and thereby the proposition.
\end{proof}

\section{o-Minimality}\label{sec:ominimality}
We give a brief introduction to the theory of o-minimal structures and define all terms which are relevant in our application. We refer to the book of van den Dries (\cite{MR1633348}) for a more thorough treatment of o-minimal structures.

\begin{defn}
A set $A \subset \mathbb{R}^n$ is called semialgebraic if it is a finite union of sets of the form
\[ \{x \in \mathbb{R}^n; f(x)=0\mbox{, }g_1(x)>0, \hdots, g_s(x)>0\},\]
where $f$, $g_1, \hdots, g_s \in \mathbb{R}[X_1,\hdots,X_n]$. A map $f: A \to \mathbb{R}^m$ ($A \subset \mathbb{R}^n$) is called semialgebraic if its graph is semialgebraic.
\end{defn}

\begin{defn}
An o-minimal structure $\mathfrak{S}$ (over $(\mathbb{R},+,-,\cdot,<,0,1)$) is a sequence $\mathfrak{S}=(\mathfrak{S}_n)_{n \in \mathbb{N}}$ such that $\mathfrak{S}_n$ is a subset of the power set of $\mathbb{R}^n$ for all $n \in \mathbb{N}$ and the following conditions are satisfied:
\begin{enumerate}[label=(\roman*)]
\item $A,B \in \mathfrak{S}_n \implies A \cup B$, $\mathbb{R}^n \backslash A \in \mathfrak{S}_n$.
\item $A \in \mathfrak{S}_n \implies \mathbb{R} \times A \in \mathfrak{S}_{n+1}$.
\item $A \in \mathfrak{S}_{n+1} \implies p_{n}(A) \in \mathfrak{S}_n$, where $p_{n}: \mathbb{R}^{n+1} \to \mathbb{R}^n$ is the projection onto the first $n$ factors.
\item All semialgebraic subsets of $\mathbb{R}^n$ are contained in $\mathfrak{S}_n$.
\item The set $\mathfrak{S}_1$ consists precisely of all finite unions of point sets $\{a\}$ ($a \in \mathbb{R}$) and open intervals $(a,b)$ ($a \in \mathbb{R} \cup \{-\infty\}$, $b \in \mathbb{R} \cup \{\infty\}$).
\end{enumerate}
We call the elements of $\bigcup_{n \in \mathbb{N}}\mathfrak{S}_n$ the definable 
sets with respect to $\mathfrak{S}$ or simply the definable sets (if $\mathfrak{S}$ is fixed).
\end{defn}

Since our uniformization map goes to a product of projective spaces, we need to introduce the notion of a definable space. This notion is treated in more detail by van den Dries in Chapter 10 of \cite{MR1633348}. In the following definitions, definability will always mean definability with respect to some fixed o-minimal structure $\mathfrak{S}$.

\begin{defn}
Suppose that $A \subset \mathbb{R}^m$ and $B \subset \mathbb{R}^n$. A map $f: A \to B$ is called definable if its graph
\[\{(x,f(x))\mbox{; }x \in A\}\]
is definable.
\end{defn}

\begin{defn}
A definable space is a set $S = \cup_{i \in I}{U_i}$ with $I$ finite together with bijective maps $f_i: U_i \to U_i'$, where $U_i' \subset \mathbb{R}^{m_i}$ is a definable set, such that for all $i, j$ the set $f_i(U_i \cap U_j)$ is definable and open in $U_i'$ and the map $f_j \circ (f_i^{-1})|_{f_i(U_i \cap U_j)}: f_i(U_i \cap U_j) \to f_j(U_i \cap U_j)$ is definable and continuous.  A set $X \subset S$ is called definable if $f_i(X \cap U_i)$ is definable for every $i \in I$. We call the $f_i$ charts of $S$.
\end{defn}

\begin{defn}
Suppose that $S$ and $T$ are definable spaces. A map $F: S \to T$ is called a morphism if for every chart $f: U \to U'$ of $S$ and every chart $g: V \to V'$ of $T$ the set $f(U \cap F^{-1}(V))$ is definable and open in $U'$ and the map $gÊ\circ F \circ (f^{-1})|_{f(U \cap F^{-1}(V))}$ is continuous and definable.
\end{defn}

It is easily seen that image and pre-image of a definable set under a definable map or a morphism are definable and that the composition of two definable maps or morphisms is again a definable map or a morphism respectively. A definable map is a morphism with respect to the standard global charts of its domain and its range precisely if it is continuous.

By the Seidenberg-Tarski theorem, the semialgebraic sets themselves form an o-minimal structure (the definable maps of which are the semialgebraic maps). For our purposes, this will not be sufficient and we will have to work in the structure $\mathbb{R}_{\an,\exp}$, which contains (among other things) the graph of the exponential function on the real numbers and the graph of the restriction of any analytic function, defined on an open neighbourhood of $[0,1]^n$, to $[0,1]^n$ ($n \in \mathbb{N}$). That this structure is o-minimal and admits analytic cell decomposition is due to van den Dries and Miller (see \cite{MR1264338}).

In order to prove our main theorem, we will need that rational points on definable sets are sparse unless there is a ``reason" for them not to be sparse in the form of a semialgebraic set, contained in the definable set. This is the famous Pila-Wilkie Theorem. We will use a variant by Habegger and Pila, counting ``semirational" points, which is what we will need in the proof.

\begin{thm}\label{thm:habegger-pila} (Habegger-Pila)
Let $Z \subset \mathbb{R}^m \times \mathbb{R}^{n_1} \times \mathbb{R}^{n_2}$ be a definable set and $\epsilon > 0$. Let $\pi_1$, $\pi_2$ and $\pi_3$ be the projections onto $\mathbb{R}^m$, $\mathbb{R}^{n_1}$ and $\mathbb{R}^{n_2}$ respectively. There is a constant $c=c(Z,\epsilon)>0$ with the following property. If $T \geq 1$ and
\[|\pi_3(\{(y,z_1,z_2) \in Z\mbox{; } y=(y_1,\hdots,y_m) \in \mathbb{Q}^m\mbox{, } \max_{j=1,\hdots,m}H(y_j) \leq T\})| > cT^{\epsilon},\]
there exists a continuous and definable function $\delta: [0,1] \to Z$ such that the following properties hold.
\begin{enumerate}[label=(\roman*)]
\item The composition $\pi_1 \circ \delta: [0,1] \to \mathbb{R}^m$ is semialgebraic and its restriction to $(0,1)$ is real analytic.
\item The composition $\pi_3 \circ \delta: [0,1] \to \mathbb{R}^{n_2}$ is non-constant.
\item If the o-minimal structure admits analytic cell decomposition, the restriction of $\delta$ to $(0,1)$ is real analytic.
\end{enumerate}
\end{thm}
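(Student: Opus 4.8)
This statement is, in essence, a restatement of a counting result of Habegger and Pila (it is close to a corollary on ``semirational'' points in \cite{MR3552014}), so the first thing I would do is check that it follows directly from their work; failing a clean citation, here is how I would prove it.

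The plan is to combine the block-family version of the Pila--Wilkie theorem with a pigeonhole argument. First I would invoke the form of Pila--Wilkie in which one counts, for a definable set, only the points whose coordinates in a \emph{prescribed} block of coordinates are rational: applied to $Z \subset \mathbb{R}^m \times \mathbb{R}^{n_1} \times \mathbb{R}^{n_2}$ with the first $m$ coordinates distinguished, this produces finitely many definable families of ``blocks'' $B^{(i)} \subset \mathbb{R}^{k_i} \times Z$ such that every fibre $B^{(i)}_t$ is a connected definable subset of $Z$ whose image $\pi_1(B^{(i)}_t)$ is semialgebraic, and such that for every $T \geq 1$ the set $R(T) = \{(y,z_1,z_2) \in Z : y \in \mathbb{Q}^m,\ \max_j H(y_j) \leq T\}$ is covered by at most $c_0 T^{\epsilon}$ of these fibres, with $c_0 = c_0(Z,\epsilon)$.

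Now take $c = c_0 + 1$. If $|\pi_3(R(T))| > cT^{\epsilon}$, then since $R(T)$ is covered by at most $c_0 T^{\epsilon} < cT^{\epsilon}$ blocks, the pigeonhole principle yields a single block $B = B^{(i)}_t$ with $|\pi_3(B \cap R(T))| \geq 2$, so $\pi_3$ is non-constant on $B$. It remains to extract the curve $\delta$. Since $B$ is definable and connected, it is definably path-connected; by o-minimal cell decomposition one can select a continuous definable path $\delta : [0,1] \to B \subset Z$ along which $\pi_3$ takes two distinct values, so $\pi_3 \circ \delta$ is non-constant, which gives (ii); and, using that $\pi_1(B)$ is a connected semialgebraic set and hence semialgebraically path-connected, one can arrange $\pi_1 \circ \delta$ to be a semialgebraic path, real analytic on $(0,1)$ after a harmless reparametrisation, which gives (i). For (iii), one carries out the cell decomposition in the analytic category, which is legitimate whenever the structure admits analytic cell decomposition, as does $\mathbb{R}_{\an,\exp}$ by van den Dries--Miller.

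I expect the genuinely delicate point to be the second step: obtaining the Pila--Wilkie block structure in the form where only the $\pi_1$-coordinates are rational and the resulting blocks are guaranteed to be semialgebraic \emph{in the $\pi_1$-direction} (rather than fully semialgebraic), together with the uniformity of the $c_0 T^{\epsilon}$ bound across the whole family; this asymmetric version, and the curve selection producing a $\delta$ whose $\pi_1$-component is honestly semialgebraic rather than merely of semialgebraic image, is precisely the technical refinement contributed by Habegger and Pila, so in practice I would simply cite their result.
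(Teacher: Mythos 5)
Your proposal is correct and takes essentially the same approach as the paper: the paper's proof is simply a citation of Corollary 7.2 of Habegger--Pila (with the remark, obtained by going through their proof, that $\delta$ can be chosen so that $\pi_3\circ\delta$ rather than $(\pi_2,\pi_3)\circ\delta$ is non-constant), and your block-plus-pigeonhole sketch is a faithful outline of how that corollary is proved, with the genuinely delicate point --- producing a path whose $\pi_1$-component is honestly semialgebraic rather than merely having semialgebraic image --- correctly identified and deferred to the citation. Your pigeonhole on the $\pi_3$-images of the blocks is in fact exactly the modification the paper invokes to pass from ``$(\pi_2,\pi_3)\circ\delta$ non-constant'' to ``$\pi_3\circ\delta$ non-constant''.
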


\begin{proof}
This is a special case of Corollary 7.2 in \cite{MR3552014} with $k = \mathbb{Q}$, $\mathbb{R}^n = \mathbb{R}^{n_1} \times \mathbb{R}^{n_2}$ and
\[ \Sigma = \{(y,z_1,z_2) \in Z\mbox{; } y=(y_1,\hdots,y_m) \in \mathbb{Q}^m\mbox{, } \max_{j=1,\hdots,m}H(y_j) \leq T\}.\]
A priori, the corollary only provides $\delta$ such that $(\pi_2,\pi_3) \circ \delta$ is non-constant. Going through its proof, we see however that $\delta$ can actually be chosen such that $\pi_3 \circÊ\delta$ is non-constant. Note that we don't need the additional uniformity in families that the corollary provides.
\end{proof}

\section{Definability}\label{sec:uniformization}
In order to be able to use the powerful o-minimality result from the last section, we must show that our analytic uniformization of $\mathfrak{A}_{g,l}(\mathbb{C})$ is definable, when restricted to a suitable set. In order to be able to speak of e.g. definable or semialgebraic subsets of $\mathbb{C}$ or $\mathbb{H}_g$, we will always identify $\mathbb{C}$ with $\mathbb{R}^2$ and $\M_g(\mathbb{C})$ with $\mathbb{R}^{2g^2}$ by identifying $u + v \sqrt{-1} \in \mathbb{C}$ with $(u,v) \in \mathbb{R}^2$. The following important proposition is due to Peterzil-Starchenko. Note that $\mathbb{P}^{l^g-1}(\mathbb{C})$ is a definable space with respect to its standard atlas.

\begin{prop}\label{prop:uniformization} (Peterzil-Starchenko)
The map $\exp: \mathbb{H}_g \times \mathbb{C}^g  \to \mathfrak{A}_{g,l}(\mathbb{C}) \subset  \mathbb{P}^{l^g-1}(\mathbb{C}) \times \mathbb{P}^{l^g-1}(\mathbb{C})$, defined as in Proposition \ref{prop:universalfamily}, has the following properties:
\begin{enumerate}[label=(\roman*)]
\item There is an open subset $U$ of $\mathbb{H}_g \times \mathbb{C}^g$ such that the restriction of $\exp$ to $U$ is a morphism of definable spaces in $\mathbb{R}_{\an,\exp}$ and $U$ contains the set
\[\{(\tau,\Omega_\tau x) \in F \times \mathbb{C}^g;\\
x \in [0,1)^{2g}\},\]
where $F$ is a Siegel fundamental domain for the congruence subgroup $G(l,2l)$ of $\Sp_{2g}(\mathbb{Z})$.
\item The map $\exp|_{U}$ is surjective.
\end{enumerate}
\end{prop}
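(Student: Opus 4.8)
The plan is to deduce this from the theorem of Peterzil--Starchenko on definability of restricted theta functions. The genuine analytic content -- definability in $\mathbb{R}_{\an,\exp}$ of the classical theta series when the period matrix runs over a Siegel set and the argument over a bounded region -- is theirs, so our task is bookkeeping: reshaping the fundamental domain $F$ for $G(l,2l)$ into Siegel-set form, absorbing the factor $l$ built into $\exp$, passing to an open set $U$, and checking the morphism and surjectivity assertions.

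Concretely, I would use the Peterzil--Starchenko result in the form: for all $t,s>0$ and every bounded box $B\subset\mathbb{R}^{2g}$, each theta coordinate $(\sigma,w)\mapsto\theta_c(\sigma,w)$ (valued in $\mathbb{C}=\mathbb{R}^2$) is definable in $\mathbb{R}_{\an,\exp}$ on $\{\sigma\in\mathbb{H}_g:\ |\Re\sigma_{jk}|\le t,\ \Im\sigma\ge sE_g\}\times\{\Omega_\sigma x:\ x\in B\}$. Writing the given fundamental domain as $F=\bigcup_{j=1}^{n}g_jF_\star$ with $F_\star$ the standard Siegel fundamental domain for $\Sp_{2g}(\mathbb{Z})$ and $g_j\in\Sp_{2g}(\mathbb{Z})$ (as in Definition \ref{defn:siegelfundamentalset}), I would treat each piece separately. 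On $g_jF_\star$ the classical transformation behaviour of the level-$l$ theta functions under $\Sp_{2g}(\mathbb{Z})$ -- combined with the identity $l\,(g_j[\tau_0])=\tilde g_j[l\tau_0]$, $\tilde g_j=\bigl(\begin{smallmatrix}E_g&0\\0&l^{-1}E_g\end{smallmatrix}\bigr)g_j\bigl(\begin{smallmatrix}E_g&0\\0&lE_g\end{smallmatrix}\bigr)$, noted in the proof of Proposition \ref{prop:universalfamily}(ii) -- rewrites every theta coordinate of $\exp$ at a point $(\tau,z)$ with $\tau=g_j[\tau_0]$ as a common automorphy factor (a square root of a polynomial times a restricted exponential, hence definable on $F_\star$, and anyway immaterial in projective space), times a root of unity, times a single $\theta_{c'}(l\tau_0,lz')$, where $\tau_0\in F_\star$, $z'$ depends linearly on $(\tau,z)$, and $c'$ is a relabelling of $c$. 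Since $F_\star\subset\{|\Re\tau_{jk}|\le\tfrac12,\ \Im\tau\ge\tfrac{\sqrt3}{2}E_g\}$, the substitution $(\tau_0,z')\mapsto(l\tau_0,lz')$ -- under which $z'=\Omega_{\tau_0}(x_1,x_2)$ becomes $lz'=\Omega_{l\tau_0}(x_1,lx_2)$, keeping a bounded box bounded -- lands in a Siegel set with a bounded $w$-box, which is exactly the setting of the Peterzil--Starchenko statement. Hence the restriction of $\exp$ to $g_jF_\star$ crossed with the corresponding $z$-region is the composite of a semialgebraic reparametrisation of the source, the Peterzil--Starchenko map over $F_\star$, and a semialgebraic projective-linear change of coordinates on the target, and is therefore definable.

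To produce $U$ I would enlarge: replace $F_\star$ by a slightly larger Siegel set (slightly larger $t$, slightly smaller $s$) in each piece, which preserves definability, and set $U=\{(\tau,\Omega_\tau x):\ \tau\in\widetilde F,\ x\in(-\delta,1+\delta)^{2g}\}$ for the corresponding semialgebraic open neighbourhood $\widetilde F$ of $F$; this $U$ is semialgebraic and open, since $(\tau,x)\mapsto(\tau,\Omega_\tau x)$ is a real-analytic diffeomorphism of $\mathbb{H}_g\times\mathbb{R}^{2g}$ onto $\mathbb{H}_g\times\mathbb{C}^g$, and it contains the set displayed in (i); in particular $U$ is a definable space with a single chart, so only the target charts need to be examined. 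By the above, each $(\tau,z)\mapsto\theta_c(l\tau,lz)$ and each $\tau\mapsto\theta_c(l\tau,0)$ is definable on $U$; as the $\theta_c$ have no common zero, the preimage under $\exp|_U$ of a product of standard affine charts of $\mathbb{P}^{l^g-1}(\mathbb{C})\times\mathbb{P}^{l^g-1}(\mathbb{C})$ is the definable open subset of $U$ on which the two relevant coordinate theta functions are nonzero, and there the chart composition of $\exp$ is a tuple of ratios of definable functions with non-vanishing denominators, hence definable and (being holomorphic) continuous -- this gives (i). For (ii), $U$ contains $\{(\tau,\Omega_\tau x):\ \tau\in F,\ x\in[0,1)^{2g}\}$, a fundamental set for the $G(l,2l)\ltimes\mathbb{Z}^{2g}$-action on $\mathbb{H}_g\times\mathbb{C}^g$ ($F$ meets every $G(l,2l)$-orbit in $\mathbb{H}_g$, and $\Omega_\tau([0,1)^{2g})$ meets every $\Omega_\tau\mathbb{Z}^{2g}$-orbit in $\mathbb{C}^g$); since $\exp$ is invariant under that action and surjective onto $\mathfrak{A}_{g,l}(\mathbb{C})$ by Proposition \ref{prop:universalfamily}(ii), $\exp|_U$ is already surjective.

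The main obstacle is the reduction of the second paragraph. Peterzil--Starchenko's theorem concerns honest Siegel sets, whereas the pieces $g_jF_\star$ of a fundamental domain for $G(l,2l)$ with $g_j\neq E_{2g}$ are not contained in any Siegel set -- they run up to the real boundary of $\mathbb{H}_g$ -- so one genuinely needs the $\Sp_{2g}(\mathbb{Z})$-transformation formula for the theta functions, together with the fact that the resulting automorphy factors and the $g_j$-induced relabelling of the coordinate system disappear upon passing to projective space, in order to bring each piece back to the standard Siegel domain. Everything else -- openness (and semialgebraicity) of $U$, the morphism-of-definable-spaces verification, and surjectivity -- is routine.
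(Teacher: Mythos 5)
Your proposal is correct in substance but takes a more self-contained route than the paper for part (i). The paper simply invokes Corollary 7.10(1) of Peterzil--Starchenko with $D=lE_g$, and that corollary already delivers definability of the restricted theta map over a fundamental domain of the relevant finite-index subgroup; in other words, the extension from honest Siegel sets to the translates $g_jF_\star$ is part of the cited result, not something the paper redoes. You instead take as input only the Siegel-set form of Peterzil--Starchenko and carry out that extension yourself via the theta transformation formula -- essentially reconstructing the internal mechanism of their Corollary 7.10. This works, and what it buys is independence from the precise packaging of the citation, at the cost of length. One step needs more care than you give it: the conjugate $\tilde g_j=\bigl(\begin{smallmatrix}E_g&0\\0&l^{-1}E_g\end{smallmatrix}\bigr)g_j\bigl(\begin{smallmatrix}E_g&0\\0&lE_g\end{smallmatrix}\bigr)$ lies in $\Sp_{2g}(\mathbb{Z})$ only when the lower-left block of $g_j$ is divisible by $l$, which fails for general coset representatives of $G(l,2l)$ in $\Sp_{2g}(\mathbb{Z})$ (the isomorphism recalled in Section 2 conjugates $G(l,2l)$ itself into $\Sp_{2g}(\mathbb{Z})$, not its coset representatives). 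For $\tilde g_j\in\Sp_{2g}(\mathbb{Q})\setminus\Sp_{2g}(\mathbb{Z})$ the level-$l$ theta coordinates at $\tilde g_j[\sigma]$ are finite constant-coefficient linear combinations of theta functions with rational characteristics of bounded denominator at $\sigma$, rather than a common automorphy factor times a single relabelled $\theta_{c'}$; likewise the transformed characteristics can acquire a nonzero second component, which you must absorb as a bounded shift of the $z$-argument. None of this breaks the argument -- all you actually use is a constant invertible linear change of coordinates on the target together with a definable common factor -- but the phrase ``times a root of unity, times a single $\theta_{c'}$'' overstates what the transformation theory gives here. Your construction of $U$, the chart-by-chart verification of the morphism property, and the surjectivity argument in (ii) coincide with the paper's treatment.
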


\begin{proof}
Going back to the proof of Proposition \ref{prop:universalfamily}, we see that it suffices to show that the function $\phi$ as defined there is definable, when restricted to an open set that contains
\[ \{ (l\tau,\Omega_\tau lx) \in \mathbb{H}_g \times \mathbb{C}^g; \tau \in F, x \in [0,1)^{2g} \}.\]
This is a consequence of Corollary 7.10(1) of \cite{MR3039679} with $D=lE_g$, since $F$ consists of finitely many translates of the Siegel fundamental domain and $l\tau = M[\tau]$, where
\[ M= \begin{pmatrix} \sqrt{l}E_g & 0 \\ 0 & \frac{1}{\sqrt{l}}E_g \end{pmatrix} \in \Sp_{2g}(\mathbb{R}).\]
As $\exp$ is clearly continous, we deduce (i).

Next, we deduce (ii) from Proposition \ref{prop:universalfamily}(ii), since $U$ contains at least one element of each orbit of the action of $G(l,2l) \ltimes \mathbb{Z}^{2g}$ on $\mathbb{H}_g \times \mathbb{C}^g$.
\end{proof}

\section{Functional transcendence}\label{sec:functionaltranscendence}

Let $\mathcal{S} \subset A_{g,l}$ be an irreducible smooth locally closed curve, set $\mathcal{A} = \pi^{-1}(\mathcal{S})$ and let $\mathcal{C}Ê\subset \mathcal{A}$ be an irreducible closed curve. Let $\exp$ be as in Proposition \ref{prop:universalfamily}. Once we have used the Habegger-Pila theorem to find a semialgebraic obstruction, the following theorem (known as ``Ax of log type'') which is due to Gao will allow us to conclude that $\mathcal{C}$ is contained in an irreducible variety as described in Theorem \ref{thm:mainmain} of suitable codimension.

Recall that $\xi$ is the generic point of $\mathcal{S}$ and $\left(\mathcal{A}_\xi^{\overline{\bar{\mathbb{Q}}(\mathcal{S})}/\bar{\mathbb{Q}}},\Tr\right)$ is the $\overline{\bar{\mathbb{Q}}(\mathcal{S})}/\bar{\mathbb{Q}}$-trace of $\mathcal{A}_\xi$. In this section, we will use subscripts to denote the base change of varieties and morphisms.

\begin{thm}\label{lem:ax-lindemann-schanuel} (Gao)
Suppose that $\pi(\mathcal{C}) = \mathcal{S}$. Let $\tilde{Y}$ be an arbitrary complex analytic irreducible component of $\exp^{-1}(\mathcal{C}(\mathbb{C}))$. Then exactly one complex analytic irreducible component of the intersection of the Zariski closure of $\tilde{Y}$ in $\M_g(\mathbb{C}) \times \mathbb{C}^g$ with $\mathbb{H}_g \timesÊ\mathbb{C}^g$ contains $\tilde{Y}$. Furthermore, the intersection of this component with $\exp^{-1}(\mathcal{A}(\mathbb{C}))$ maps under $\exp$ onto the complex points of a subvariety $\mathcal{W}$ of $\mathcal{A}$ containing $\mathcal{C}$ such that over $\overline{\bar{\mathbb{Q}}(\mathcal{S})}$, every irreducible component of $\mathcal{W}_\xi$ is a translate of an abelian subvariety of $\mathcal{A}_\xi$ by a point in $(\mathcal{A}_\xi)_{\tors} + \Tr\left(\mathcal{A}_\xi^{\overline{\bar{\mathbb{Q}}(\mathcal{S})}/\bar{\mathbb{Q}}}(\bar{\mathbb{Q}})\right)$.
\end{thm}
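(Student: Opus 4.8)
The plan is to deduce this from Gao's logarithmic Ax theorem for the mixed Shimura variety $\mathfrak{A}_{g,l}$ (the ``Ax of log type'' statement that generalises the theorems of André and Bertrand, see \cite{G15}), combined with the fibre-wise classification of weakly special subvarieties of $\mathfrak{A}_{g,l}$ that dominate a subvariety of $A_{g,l}$, and finally a Galois-descent step for the field of definition. By Proposition \ref{prop:universalfamily} the data $\mathfrak{A}_{g,l}\to A_{g,l}$ together with $\exp\colon\mathbb{H}_g\times\mathbb{C}^g\to\mathfrak{A}_{g,l}(\mathbb{C})$ is, after the chosen projective embeddings, the analytic uniformization of a connected mixed Shimura variety of Kuga type, so Pink's notion of weakly special subvariety of $\mathfrak{A}_{g,l}$ is available and Gao's functional-transcendence results apply verbatim.

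The one genuinely new piece of bookkeeping is to match two descriptions of the relevant subvarieties of $\mathcal{A}=\pi^{-1}(\mathcal{S})$: I would check that an irreducible subvariety $\mathcal{W}\subseteq\mathcal{A}$ with $\pi(\mathcal{W})=\mathcal{S}$ is an irreducible component of $\mathcal{A}\cap\mathcal{W}'$ for some weakly special subvariety $\mathcal{W}'$ of $\mathfrak{A}_{g,l}$ precisely when, over $\overline{\bar{\mathbb{Q}}(\mathcal{S})}$, every irreducible component of $\mathcal{W}_\xi$ is a translate of an abelian subvariety of $\mathcal{A}_\xi$ by a point of $(\mathcal{A}_\xi)_{\tors}+\Tr\left(\mathcal{A}_\xi^{\overline{\bar{\mathbb{Q}}(\mathcal{S})}/\bar{\mathbb{Q}}}(\bar{\mathbb{Q}})\right)$. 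This is the fibre-wise unwinding of the definition of a special (hence weakly special, once the weakly special base direction is forgotten) subvariety of a Kuga variety, the set $(\mathcal{A}_\xi)_{\tors}+\Tr\left(\mathcal{A}_\xi^{\overline{\bar{\mathbb{Q}}(\mathcal{S})}/\bar{\mathbb{Q}}}(\bar{\mathbb{Q}})\right)$ being exactly the set of special points of the fibre $\mathcal{A}_\xi$; the only care needed is that $\mathcal{S}$ itself need not be weakly special, so that $\pi(\mathcal{W}')$ may be a larger weakly special subvariety of $A_{g,l}$, and one must observe that restricting from $\pi(\mathcal{W}')$ to $\mathcal{S}$ only enlarges the available trace and preserves the shape of the fibres.

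With this dictionary in hand I would apply Gao's logarithmic Ax theorem to the irreducible subvariety $\mathcal{C}$ of $\mathfrak{A}_{g,l}$, using $\pi(\mathcal{C})=\mathcal{S}$, and to the chosen component $\tilde{Y}$ of $\exp^{-1}(\mathcal{C}(\mathbb{C}))$. That theorem yields the first assertion directly — among the complex analytic irreducible components of the intersection of the Zariski closure of $\tilde{Y}$ in $\M_g(\mathbb{C})\times\mathbb{C}^g$ with $\mathbb{H}_g\times\mathbb{C}^g$ there is exactly one, $\tilde{W}$, containing $\tilde{Y}$ — and it identifies $\exp(\tilde{W})$ with the smallest weakly special subvariety $\mathcal{W}'$ of $\mathfrak{A}_{g,l}$ containing $\mathcal{C}$. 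I would then take $\mathcal{W}$ to be the irreducible component of $\mathcal{A}\cap\mathcal{W}'$ through $\mathcal{C}$, which as a set of complex points is exactly $\exp\!\left(\tilde{W}\cap\exp^{-1}(\mathcal{A}(\mathbb{C}))\right)$; by construction $\mathcal{C}\subseteq\mathcal{W}$, and since $\mathcal{W}_\xi$ is an irreducible component of the fibre of $\mathcal{W}'$ over $\xi$, the dictionary of the previous paragraph gives the asserted description of $\mathcal{W}_\xi$. Finally, $\mathcal{W}$ is defined over $\bar{\mathbb{Q}}$: as $\mathcal{C}$ is defined over the number field $K$ and the smallest weakly special subvariety of $\mathfrak{A}_{g,l}$ containing $\mathcal{C}$ is unique, $\mathcal{W}'$ is $\Aut(\mathbb{C}/\bar{\mathbb{Q}})$-stable, hence defined over $\bar{\mathbb{Q}}$, and so is the component $\mathcal{W}$ of $\mathcal{A}\cap\mathcal{W}'$ through the $\bar{\mathbb{Q}}$-curve $\mathcal{C}$.

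The main obstacle is that the entire substance of the argument — the logarithmic Ax theorem itself — is a deep theorem of Gao, resting on o-minimal point counting, the Ax--Lindemann theorem for $\mathfrak{A}_{g,l}$ and monodromy arguments; in this paper it is used as a black box, which is why the present section only ``records'' it. The only real work on our side is the translation between Pink's abstract notion of weakly special subvariety of the mixed Shimura variety $\mathfrak{A}_{g,l}$ and the concrete fibre-wise description in terms of abelian subvarieties, torsion and the $\overline{\bar{\mathbb{Q}}(\mathcal{S})}/\bar{\mathbb{Q}}$-trace of $\mathcal{A}_\xi$, together with checking that passing from $\mathcal{W}'$ to the component $\mathcal{W}$ of $\mathcal{A}\cap\mathcal{W}'$ and then to the generic fibre preserves the required shape.
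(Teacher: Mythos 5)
Your proposal follows essentially the same route as the paper: apply Gao's logarithmic Ax theorem to $\mathfrak{A}_{g,l}$ to get the unique component and identify its image with the smallest weakly special subvariety containing $\mathcal{C}$, translate ``weakly special'' into the fibre-wise description over the generic point, and use Galois descent to get everything defined over $\bar{\mathbb{Q}}$. One caveat: your ``dictionary'' is stated too strongly. A general weakly special subvariety of $\mathfrak{A}_{g,l}$ dominating $\mathcal{S}$ only yields, fibre-wise over an algebraic closure $L$ of $\mathbb{C}(\mathcal{S})$, translates by points in $(\mathcal{A}_\xi)_{\tors}+\Tr\bigl(\mathcal{A}_\xi^{\overline{\bar{\mathbb{Q}}(\mathcal{S})}/\bar{\mathbb{Q}}}(\mathbb{C})\bigr)$ (a transcendental constant section gives a genuine counterexample to the version with $\bar{\mathbb{Q}}$-points), so the equivalence you assert fails without the hypothesis that $\mathcal{W}$ is defined over $\bar{\mathbb{Q}}$. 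The paper therefore first proves $\bar{\mathbb{Q}}$-definedness of the smallest weakly special subvariety, then compares the $L/\mathbb{C}$-trace with the base change of the $\overline{\bar{\mathbb{Q}}(\mathcal{S})}/\bar{\mathbb{Q}}$-trace, and finally descends the translating point from a $\mathbb{C}$-point to a $\bar{\mathbb{Q}}$-point of the trace by observing that $\Tr_L^{-1}\bigl((t+\mathcal{W}_\xi)_L\bigr)$ is defined over both $\mathbb{C}$ and $\overline{\bar{\mathbb{Q}}(\mathcal{S})}$, hence over their intersection $\bar{\mathbb{Q}}$. Since you do establish the $\bar{\mathbb{Q}}$-definedness of $\mathcal{W}$ at the end, your argument is repairable by reordering and adding this trace-comparison step; as written, though, the middle step is not just ``unwinding the definition.''
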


One irreducible component of the variety $\mathcal{W}$ in Theorem \ref{lem:ax-lindemann-schanuel} is the variety the existence of which Theorem \ref{thm:mainmain} postulates. Our statement of the theorem differs from Gao's in the terminology that we use. Before we can prove that our version follows from Gao's version, we need to introduce Gao's terminology: We follow the exposition in \cite{G182}. An abelian subscheme $\mathcal{B}$ of $\mathcal{A}_{\mathbb{C}}$ is an irreducible subgroup scheme of $\mathcal{A}_{\mathbb{C}}Ê\to \mathcal{S}_{\mathbb{C}}$ which is proper and flat over $\mathcal{S}_{\mathbb{C}}$ and dominates $\mathcal{S}_{\mathbb{C}}$. An irreducible subvariety $\mathcal{Z}$ of $\mathcal{A}_{\mathbb{C}}$ is called a generically special subvariety of sg type if there exists a finite cover $\mathcal{S}' \to \mathcal{S}_{\mathbb{C}}$, inducing a morphism $\rho:Ê\mathcal{A}' = \mathcal{A}_{\mathbb{C}} \times_{\mathcal{S}_{\mathbb{C}}} \mathcal{S}' \to \mathcal{A}_{\mathbb{C}}$ such that $\mathcal{Z} = \rho(\sigma'+\sigma_0'+\mathcal{B}')$, where $\mathcal{B}'$ is an abelian subscheme of $\mathcal{A}'$, $\sigma'$ is a torsion section of $\mathcal{A}'$ and $\sigma_0'$ is a constant section of $\mathcal{A}'$, i.e. the composition of a section $\mathcal{S}' \to C' \times \mathcal{S}', s \mapsto (q,s)$ ($C'$ an abelian variety over $\mathbb{C}$, $q \in C'(\mathbb{C})$) with an isomorphism between $C' \times \mathcal{S}'$ and an abelian subscheme of $\mathcal{A}'$. We can now prove Theorem \ref{lem:ax-lindemann-schanuel}.

\begin{proof}
We apply Theorem 8.1 of \cite{G152} to the connected mixed Shimura variety $S = \mathfrak{A}_{g,l}(\mathbb{C})$ with uniformization map $\exp: \mathbb{H}_g \times \mathbb{C}^g \to \mathfrak{A}_{g,l}(\mathbb{C})$ and subvariety $Y$ equal to the Zariski closure of $\mathcal{C}(\mathbb{C})$ in $\mathfrak{A}_{g,l}(\mathbb{C})$. As $Y(\mathbb{C})\backslashÊ\mathcal{C}(\mathbb{C})$ is finite, the closure with respect to the Euclidean topology of $\tilde{Y}$ is a complex analytic irreducible component of $\exp^{-1}(Y(\mathbb{C}))$. Together with Theorem 8.1 of \cite{G152}, this implies that exactly one complex analytic irreducible component of the intersection of the Zariski closure of $\tilde{Y}$ in $\M_g(\mathbb{C}) \times \mathbb{C}^g$ with $\mathbb{H}_g \timesÊ\mathbb{C}^g$ contains $\tilde{Y}$ and that this component maps onto the complex points of a weakly special subvariety $\tilde{\mathcal{W}}$ of $\left(\mathfrak{A}_{g,l}\right)_{\mathbb{C}}$ and that $\tilde{\mathcal{W}}$ is the smallest weakly special subvariety containing $Y$. As a weakly special subvariety, $\tilde{\mathcal{W}}$ is irreducible. By Proposition 5.3 of \cite{G182} (cf. Proposition 1.1 and 3.3 of \cite{G15}) the variety $\tilde{\mathcal{W}}$ is a generically special subvariety of sg type of the abelian scheme $\pi^{-1}(\pi(\tilde{\mathcal{W}})) \to \pi(\tilde{\mathcal{W}})$, where this term is defined analogously for $\pi^{-1}(\pi(\tilde{\mathcal{W}})) \to \pi(\tilde{\mathcal{W}})$ as for $\mathcal{A}_{\mathbb{C}} \toÊ\mathcal{S}_{\mathbb{C}}$ (see Definition 1.5 in \cite{G182}).

A priori, $\tilde{\mathcal{W}}$ is defined over $\mathbb{C}$, but since it is the smallest such weakly special subvariety, Galois conjugates of weakly special subvarieties as well as irreducible components of intersections of weakly special subvarieties are weakly special and $\mathcal{C}$ and hence $Y$ are defined over $\bar{\mathbb{Q}}$, it must be defined over $\bar{\mathbb{Q}}$. We set $\mathcal{W} = \tilde{\mathcal{W}} \cap \mathcal{A}$, considered as a variety over $\bar{\mathbb{Q}}$. This is a subvariety of $\mathcal{A}$ that contains $\mathcal{C}$.

Let $L$ be an algebraic closure of the function field of $\mathcal{S}_{\mathbb{C}}$. We identify $\overline{\bar{\mathbb{Q}}(\mathcal{S})}$ with the algebraic closure of $\bar{\mathbb{Q}}(\mathcal{S})$ in $L$. The $L/\mathbb{C}$-trace of $\left(\mathcal{A}_\xi\right)_L$ coincides with the base change of the $\mathbb{C}\overline{\bar{\mathbb{Q}}(\mathcal{S})}/\mathbb{C}$-trace of $\left(\mathcal{A}_\xi\right)_{\mathbb{C}\overline{\bar{\mathbb{Q}}(\mathcal{S})}}$, which coincides with the base change of the $\overline{\bar{\mathbb{Q}}(\mathcal{S})}/\bar{\mathbb{Q}}$-trace of $\mathcal{A}_\xi$ by Theorem 6.8 of \cite{MR2255529}. As $\tilde{\mathcal{W}}$ is generically special of sg type (as a variety over $\mathbb{C}$), it follows from the universal property of the trace that every irreducible component of $\left(\mathcal{W}_\xi\right)_L$ is a translate of an abelian subvariety of $\left(\mathcal{A}_\xi\right)_L$ by a point in $\left(\mathcal{A}_\xi\right)_{\tors}+\Tr_L\left(\mathcal{A}_\xi^{\overline{\bar{\mathbb{Q}}(\mathcal{S})}/\bar{\mathbb{Q}}}(\mathbb{C})\right)$.

For every torsion point $t$ of $\mathcal{A}_\xi$ the subvariety $\Tr_L^{-1}\left(\left(t+\mathcal{W}_{\xi}\right)_L\right)$ of $\left(\mathcal{A}^{\overline{\bar{\mathbb{Q}}(\mathcal{S})}/\bar{\mathbb{Q}}}_\xi\right)_L$ is then defined both over $\mathbb{C}$ and $\overline{\bar{\mathbb{Q}}(\mathcal{S})}$, since all abelian subvarieties and torsion points of $\left(\mathcal{A}^{\overline{\bar{\mathbb{Q}}(\mathcal{S})}/\bar{\mathbb{Q}}}_\xi\right)_L$ are defined over $\bar{\mathbb{Q}}$. Hence, $\Tr_L^{-1}\left(\left(t+\mathcal{W}_{\xi}\right)_L\right)$ is defined over the intersection of these two fields in $L$, which is equal to $\bar{\mathbb{Q}}$. Therefore, the irreducible components of $\mathcal{W}_\xi$ are in fact translates of abelian subvarieties by points in $\left(\mathcal{A}_\xi\right)_{\tors}+\Tr\left(\mathcal{A}_\xi^{\overline{\bar{\mathbb{Q}}(\mathcal{S})}/\bar{\mathbb{Q}}}(\bar{\mathbb{Q}})\right)$. The theorem follows.

Note that Proposition 5.3 of \cite{G182} applies only to the universal family of principally polarized abelian varieties with symplectic level $l$-structure, but the same statement can be proved analogously for any connected mixed Shimura variety of Kuga type coming from a neat congruence subgroup, so in particular for $\mathfrak{A}_{g,l}(\mathbb{C})$ (see Proposition 1.2.14 and Corollary 1.2.15 in Gao's dissertation \cite{GDiss}). One could also apply Proposition 5.3 of \cite{G182} to an irreducible component of the preimage of $\tilde{\mathcal{W}}(\mathbb{C})$ under the canonical Shimura morphism from the universal family of principally polarized abelian varieties with symplectic level $2l$-structure to $\mathfrak{A}_{g,l}(\mathbb{C})$.
\end{proof}

\section{Proof of Theorem \ref{thm:mainmain}, Theorem \ref{thm:main} and Corollary \ref{cor:dacorollary}}\label{sec:mainproof}

\subsection{Proof of Theorem \ref{thm:mainmain}}

We assume that $\mathcal{A}_\Gamma^{[k]} \cap \mathcal{C}$ is infinite and want to show that $\mathcal{C}$ is contained in an irreducible subvariety $\mathcal{W}$ of the form described in Theorem \ref{thm:mainmain}.

\subsubsection{Reduction to the universal family}

\begin{lem}
We can assume without loss of generality that $\mathcal{S} \subset A_{g,l}$ is a smooth irreducible locally closed curve (not necessarily closed in $A_{g,l}$) and $\mathcal{A} = \pi^{-1}(\mathcal{S})$.
\end{lem}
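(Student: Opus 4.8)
The strategy is to carry out the reduction in three moves, verifying for each that it preserves the hypotheses of Theorem~\ref{thm:mainmain} --- non-isotriviality, $\pi(\mathcal{C})=\mathcal{S}$, and infiniteness of $\mathcal{A}_\Gamma^{[k]}\cap\mathcal{C}$ --- and that a conclusion for the transformed data (an irreducible $\mathcal{W}$ of codimension $\geq k$ through $\mathcal{C}$ with the prescribed generic fibre) yields one for the original data; note that $A_0$ and $\Gamma$ are never altered by any of these moves. \emph{Shrinking} $\mathcal{S}$ to a non-empty Zariski-open subset is harmless: since $\pi|_{\mathcal{C}}\colon\mathcal{C}\to\mathcal{S}$ is finite (it is proper because $\mathcal{A}\to\mathcal{S}$ is, and quasi-finite because $\pi(\mathcal{C})=\mathcal{S}$), this discards only finitely many points of $\mathcal{C}$, so the intersection stays infinite, and a $\mathcal{W}$ over the open subset is replaced by its Zariski closure in $\mathcal{A}$ without changing its codimension or generic fibre. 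Passing to a \emph{finite cover} $\mathcal{S}_1\to\mathcal{S}$ is also harmless: after pulling back $\mathcal{A}$ and $\mathcal{C}$ and using that the fibres of $\mathcal{A}\times_{\mathcal{S}}\mathcal{S}_1\to\mathcal{S}_1$ and the isogenies to them from $A_0$ are literally the same as for $\mathcal{A}\to\mathcal{S}$, every point of $\mathcal{A}_\Gamma^{[k]}\cap\mathcal{C}$ lifts to a point of $(\mathcal{A}\times_{\mathcal{S}}\mathcal{S}_1)_\Gamma^{[k]}$ lying on $\mathcal{C}\times_\mathcal{S}\mathcal{S}_1$, so by pigeonhole infinitely many such lie on one irreducible component $\mathcal{C}_1$, which then dominates $\mathcal{S}_1$; conversely, finiteness of $\mathcal{S}_1\to\mathcal{S}$ makes pushforward dimension-preserving, and $\overline{\bar{\mathbb{Q}}(\mathcal{S}_1)}=\overline{\bar{\mathbb{Q}}(\mathcal{S})}$, so a $\mathcal{W}_1$ for $(\mathcal{A}\times_\mathcal{S}\mathcal{S}_1,\mathcal{C}_1)$ descends to a suitable $\mathcal{W}$. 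Finally, non-isotriviality is insensitive to finite base change, the $\overline{\bar{\mathbb{Q}}(\mathcal{S})}/\bar{\mathbb{Q}}$-trace being computed over the very same field in both cases.

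Next I would reduce to the case where $\mathcal{A}\to\mathcal{S}$ is principally polarized. Fix a polarization $\lambda$ of $\mathcal{A}$; after a finite \'{e}tale cover of $\mathcal{S}$ over which $\ker\lambda$ becomes a constant group scheme, choose a maximal isotropic subgroup $K\subset\ker\lambda$ for its canonical alternating pairing, so that $\mathcal{B}=\mathcal{A}/K\to\mathcal{S}$ carries a principal polarization and $f\colon\mathcal{A}\to\mathcal{B}$ is an isogeny over $\mathcal{S}$. Replacing $(\mathcal{A},\mathcal{C})$ by $(\mathcal{B},f(\mathcal{C}))$ --- $f$ is finite, so $f(\mathcal{C})$ is again an irreducible curve dominating $\mathcal{S}$ --- is harmless: if $p=\phi(\gamma+b)\in\mathcal{A}_\Gamma^{[k]}\cap\mathcal{C}$ with $\phi\colon A_0\to\mathcal{A}_s$ an isogeny and $B_0$ of codimension $\geq k$, then $f_s\circ\phi\colon A_0\to\mathcal{B}_s$ is again an isogeny and $f(p)\in(f_s\circ\phi)(\Gamma+B_0)$, so $\mathcal{B}_\Gamma^{[k]}\cap f(\mathcal{C})$ is infinite; conversely, a $\mathcal{W}_\mathcal{B}$ as in the conclusion for $(\mathcal{B},f(\mathcal{C}))$ has an irreducible component of $f^{-1}(\mathcal{W}_\mathcal{B})$ containing $\mathcal{C}$, which is a $\mathcal{W}$ of the same codimension (as $f$ is finite), and since the preimage under an isogeny of a translate of an abelian subvariety by a point of $(\mathcal{A}_\xi)_{\tors}+\Tr(\mathcal{A}_\xi^{\overline{\bar{\mathbb{Q}}(\mathcal{S})}/\bar{\mathbb{Q}}}(\bar{\mathbb{Q}}))$ is again a finite union of such translates (isogenies preserve torsion and restrict to isogenies of traces), the generic fibre of $\mathcal{W}$ is of the required form. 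Non-isotriviality is preserved too, the trace being an isogeny invariant.

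Now $\mathcal{A}\to\mathcal{S}$ is principally polarized, and after one further finite \'{e}tale cover it acquires an orthogonal level $l$-structure for the $l$ fixed in Section~\ref{sec:preliminaries}. Since $A_{g,l}$ finely represents the corresponding moduli functor, this data yields a morphism $j\colon\mathcal{S}\to A_{g,l}$ with $\mathcal{A}\cong j^{\ast}\mathfrak{A}_{g,l}$ compatibly with $\pi$ and $\epsilon$. Because $\mathcal{A}$ is not isotrivial, $j$ is non-constant, so $j(\mathcal{S})$ is an irreducible curve; shrinking $\mathcal{S}$, we may assume that $j$ factors as a finite morphism onto a smooth irreducible locally closed curve $\mathcal{S}''\subset A_{g,l}$ followed by the inclusion and that $\mathcal{A}\cong\pi^{-1}(\mathcal{S}'')\times_{\mathcal{S}''}\mathcal{S}$. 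Thus $(\mathcal{A},\mathcal{S})$ is a finite base change of $(\pi^{-1}(\mathcal{S}''),\mathcal{S}'')$, so by the first move it suffices to prove Theorem~\ref{thm:mainmain} for $\pi^{-1}(\mathcal{S}'')\to\mathcal{S}''$, which has exactly the desired shape.

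The main obstacle is the bookkeeping sketched above: one must check in each move that a conclusion can be transported downward while preserving \emph{both} the codimension bound $\geq k$ \emph{and} the description of the generic fibre over $\overline{\bar{\mathbb{Q}}(\mathcal{S})}$, and that the possible reducibility of pullbacks along finite covers is absorbed by the pigeonhole argument. The non-isotriviality hypothesis is used exactly once and essentially: to guarantee that the modular map $j$ of the third move is non-constant, so that its image is a curve rather than a point.
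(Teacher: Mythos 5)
Your reduction is essentially correct and follows the same overall architecture as the paper's proof: reduce to a principally polarized family, add level structure by a finite cover, invoke the fine moduli property to get a classifying map $j\colon\mathcal{S}\to A_{g,l}$, use non-isotriviality to see that $j$ is non-constant, and transfer the statement between $\mathcal{S}$ and the locally closed image curve. The one genuinely different ingredient is your principal polarization step: you quotient $\mathcal{A}$ by a maximal isotropic subgroup of $\ker\lambda$ (after a finite \'etale cover trivializing it), whereas the paper instead takes a principally polarized abelian variety isogenous to the generic fibre $\mathcal{A}_\xi$, spreads it out over $\mathcal{S}$ via the universal property of the N\'eron model, and shrinks to an open subset where the resulting model is an abelian scheme. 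Your route is more hands-on and avoids N\'eron models, at the cost of having to trivialize $\ker\lambda$ and descend the polarization to the quotient in families (unproblematic in characteristic zero over a smooth curve); the paper's route delegates all spreading-out to the N\'eron model and to the reference for principal polarizability of the generic fibre of the resulting abelian scheme. Both correctly observe that non-isotriviality, the trace, and the shape of the conclusion are insensitive to isogeny and to finite base change.

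One point to repair: in the last step you invoke ``the first move,'' but your finite-cover move transfers the theorem \emph{from the cover down to the base} (pull back the hypotheses, push down the conclusion), whereas at the end you need the opposite direction, since there $\mathcal{S}$ is the \emph{cover} of $\mathcal{S}''\subset A_{g,l}$ and the statement you may assume is the one for subfamilies of the universal family over $\mathcal{S}''$. What is actually required is the symmetric argument: push the curve $\mathcal{C}$ \emph{forward} along the finite map $\mathcal{A}\to\pi^{-1}(\mathcal{S}'')$ (its image is an irreducible curve dominating $\mathcal{S}''$ and still meets the enlarged isogeny orbit in infinitely many points, the fibres and isogenies being unchanged), and then pull the resulting $\mathcal{W}''$ \emph{back}, taking the irreducible component of the preimage containing $\mathcal{C}$; codimension is preserved by finiteness and the description of the generic fibre is preserved since $\overline{\bar{\mathbb{Q}}(\mathcal{S})}=\overline{\bar{\mathbb{Q}}(\mathcal{S}'')}$. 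This is exactly how the paper handles the map $i\colon\mathcal{A}\to\mathfrak{A}_{g,l}$. The fix is routine, but as written the citation of your own earlier move does not cover the step.
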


\begin{proof}
For $l$ big enough, the scheme $A_{g,l}$ with the family of abelian varieties $\mathfrak{A}_{g,l} \to A_{g,l}$ is the fine moduli scheme of principally polarized abelian varieties of dimension $g$ with level structure ``between $l$ and $2l$''. For the precise moduli interpretation, see \cite{MR1304906}, Appendix to Chapter 7, Section B. In particular, if our family is a pull-back of the universal family of principally polarized abelian varieties of dimension $g$ with symplectic level $2l$-structure, it will automatically also be a pull-back of $\mathfrak{A}_{g,l} \to A_{g,l}$.

Let therefore $\mathcal{A} \to \mathcal{S}$ for the moment be an arbitrary abelian scheme over an irreducible smooth curve of relative dimension $g$. If $\xi$ is the generic point of $\mathcal{S}$, then the abelian variety $\mathcal{A}_\xi$ is isogenous to a principally polarized abelian variety $\tilde{A}$. The abelian variety $\tilde{A}$ as well as the isogeny are defined over some finite extension $F$ of $\bar{\mathbb{Q}}(\mathcal{S})$. After replacing $\mathcal{S}$ by a finite cover $\mathcal{S}' \to \mathcal{S}$ and $\mathcal{A}$ by its pullback under that cover, we may assume that $F = \bar{\mathbb{Q}}(\mathcal{S})$. We can replace $\mathcal{S}$ by a finite cover, since an irreducible subvariety $\mathcal{W} \subset \mathcal{A} \times_{\mathcal{S}} \mathcal{S'}$ as described in Theorem \ref{thm:mainmain} projects to an irreducible subvariety of $\mathcal{A}$ of the same form. By Theorem 3 of Section 1.4 in \cite{MR1045822}, there exists a N\'{e}ron model $\tilde{\mathcal{A}}$ of $\tilde{A}$ over $\mathcal{S}$ as defined in Definition 1 of Section 1.2 in \cite{MR1045822}. By the universal property of the N\'{e}ron model, we obtain an $\mathcal{S}$-morphism $\mathcal{A} \to \tilde{\mathcal{A}}$ which extends the isogeny between $\mathcal{A}_\xi$ and $\tilde{A}$.

By Theorem 3 of Section 1.4 in \cite{MR1045822}, there is a Zariski open subset $\tilde{\mathcal{S}}$ of $\mathcal{S}$ such that $\tilde{\mathcal{A}}Ê\times_{\mathcal{S}} \tilde{\mathcal{S}}$ is an abelian scheme over $\tilde{\mathcal{S}}$. Since $\tilde{\mathcal{S}}$ is smooth, it follows as in \cite{MR1083353}, p. 6, that the abelian scheme $\tilde{\mathcal{A}}Ê\times_{\mathcal{S}} \tilde{\mathcal{S}}$ is principally polarized, i.e. admits an isomorphism of group schemes over $\tilde{\mathcal{S}}$ to its dual abelian scheme such that the restriction of the isomorphism to each fiber over a geometric point of $\tilde{\mathcal{S}}$ is induced by an ample line bundle on that fiber. The morphism between $\mathcal{A}$ and $\tilde{\mathcal{A}}$ that extends the isogeny between $\mathcal{A}_\xi$ and $\tilde{A}$ is dominant and proper, hence surjective, so its restriction to each fiber over a point in $\tilde{\mathcal{S}}$ is an isogeny. We see that it suffices to prove the theorem for $\tilde{\mathcal{A}}Ê\times_{\mathcal{S}} \tilde{\mathcal{S}}\toÊ\tilde{\mathcal{S}}$, hence we can assume that $\mathcal{A}$ is a principally polarized abelian scheme.

We can then add symplectic level $2l$-structure to the family $\mathcal{A} \to \mathcal{S}$ by taking a finite cover of $\mathcal{S}$ (corresponding to the finite field extension of $\bar{\mathbb{Q}}(\mathcal{S})$ that is obtained by adding the $2l$-torsion points of the generic fiber).

Having done this, there is a cartesian diagram
\[
\begin{tikzcd}
\mathcal{A} \arrow{r}{i} \arrow[swap]{d}{} & \arrow{d}{} \mathfrak{A}_{g,l} \\
\mathcal{S} \arrow{r}{i_{\mathcal{S}}} & A_{g,l}
\end{tikzcd},
\]
where the morphisms $i$ and $i_{\mathcal{S}}$ are defined over $\bar{\mathbb{Q}}$. This is a consequence of Theorem 7.9 in \cite{MR1304906} that asserts the existence of a fine moduli space for principally polarized abelian varieties of dimension $g$ with full level $l$-structure for $l$ big enough (in fact, $lÊ\geq 3$ suffices). The family $\mathcal{A}$ is then a pullback of the universal family with symplectic level $2l$-structure and therefore also of the family $\mathfrak{A}_{g,l} \to A_{g,l}$ (cf. \cite{MR1304906}, Appendix to Chapter 7, Section B). For every $s \in \mathcal{S}$, the restriction $i|_{\mathcal{A}_s}$ is an isomorphism between $\mathcal{A}_s$ and $i(\mathcal{A}_s)$.

If the family $\mathcal{A}$ is not isotrivial, as we suppose in our theorem, the map $i_{\mathcal{S}}$ is non-constant, so has finite fibers, and therefore $i$ has finite fibers as well. Thus, the curve $i(\mathcal{C})$ must intersect the enlarged isogeny orbit in infinitely many points as well. If $\mathcal{W} \subset i(\mathcal{A})$ is of the form described in Theorem \ref{thm:mainmain}, then every irreducible component of $i^{-1}(\mathcal{W}) \subset \mathcal{A}$ that dominates $\mathcal{S}$ is as well, so it suffices to prove our theorem for $i(\mathcal{A}) \to i_{\mathcal{S}}(\mathcal{S})$. We can even pass to a Zariski open smooth subset of $i_{\mathcal{S}}(\mathcal{S})$ (we use that $i(\mathcal{C})$ intersects every fiber in only finitely many points). This proves the lemma.

\end{proof}

\subsubsection{Producing many points of bounded height}
We now return to subfamilies of $\mathfrak{A}_{g,l} \to A_{g,l}$ of the form $\pi^{-1}(\mathcal{S}) \to \mathcal{S}$ with $\mathcal{S}$ smooth, irreducible and locally closed. We will keep the same notation until the end of the proof.

We have
\[\sup_{p \in \mathcal{A}_\Gamma^{[k]} \cap \mathcal{C}}{[K(p):K]}=\infty,\]
since otherwise $\{\pi(p); p \in \mathcal{A}_\Gamma^{[k]}\cap\mathcal{C}\}$ would be a subset of $\mathcal{S}$ of bounded degree and hence bounded height by Lemma \ref{lem:heightbound}. By Northcott's theorem, this set would be finite and hence $\mathcal{A}_\Gamma^{[k]}\cap\mathcal{C}$ would be finite as well, since $\mathcal{C}$ intersects every fiber of $\pi$ in only finitely many points.

For each $s \in \mathcal{S}$ such that $A_0$ and $\mathcal{A}_s$ are isogenous, let $\phi_s: A_0 \to \mathcal{A}_s$ be the isogeny furnished by Corollary \ref{cor:choiceofisogeny}. We choose a point $p \in \mathcal{A}_\Gamma^{[k]} \cap \mathcal{C}$. Thanks to Lemma \ref{lem:technicallemma}, we can write $p=\phi_{\pi(p)}(\gamma+b)$ for some $\gamma \in \Gamma$, $b \in B_0$ with $B_0$ an abelian subvariety of $A_0$ of codimension $\geq k$. We set $s=\pi(p)$, $d=[K(p):K]$. By the above, we can make $d$ arbitrarily big with the right choice of $p$. If $\sigma$ is an element of $\Gal(\bar{\mathbb{Q}}/K)$, then it follows that $\sigma(p)=\sigma(\phi_s)(\sigma(\gamma)+\sigma(b))$, where $\sigma$ acts on algebraic points and maps in the usual way.

As $\mathcal{C}$ and $\mathcal{S}$ are defined over $K$, the points $\sigma(p)$ and $\sigma(s)$ lie again on $\mathcal{C}$ and $\mathcal{S}$ respectively. Note that the addition morphism $A_0 \times A_0 \to A_0$ and the inversion morphism $A_0 \to A_0$ are both defined over $K$ -- in particular, $\sigma$ fixes the zero element of $A_0$. Furthermore, it sends the zero element of $\mathcal{A}_s$ to the zero element of $\mathcal{A}_{\sigma(s)}$. It follows that the map $\sigma(\phi_s)$ is an isogeny between $\sigma(A_0)=A_0$ and $\sigma(\mathcal{A}_s) = \mathcal{A}_{\sigma(s)}$ with kernel $\sigma(\ker \phi_s)$ and therefore has degree $\deg \sigma(\phi_s)=\deg\phi_s$. Since we have assumed that all endomorphisms of $A_0$ are defined over $K$, we have $\sigma(B_0) = B_0$.

Finally, if $N \in \mathbb{N}$ is minimal such that $N\gamma=a_1\gamma_1+\hdots+a_r\gamma_r$ with rational integers $a_1, \hdots, a_r$, then
\[N\sigma(\gamma)=\sigma(N\gamma)=a_1\sigma(\gamma_1)+\hdots+a_r\sigma(\gamma_r)=a_1\gamma_1+\hdots+a_r\gamma_r.\]
It follows that $\sigma(\gamma) \in \Gamma$ and hence $\sigma(p) \in \mathcal{A}_\Gamma^{[k]} \cap \mathcal{C}$. By Lemma \ref{lem:technicallemma}, there exist $\gamma_\sigma \in \Gamma$, an abelian subvariety $B_\sigma$ of $A_0$ of codimension $\geq k$ and $b_\sigma \in B_\sigma$ such that $\phi_{\sigma(s)}(\gamma_\sigma+b_\sigma) = \sigma(p)$, where $\phi_{\sigma(s)}$ is the isogeny chosen in Corollary \ref{cor:choiceofisogeny} and $\deg \phi_{\sigma(s)} \leq \deg \sigma(\phi_s) = \deg \phi_s$. Indeed, we must have $\deg \phi_{\sigma(s)} = \deg \phi_s$, since otherwise $\sigma^{-1}\left(\phi_{\sigma(s)}\right)$ would be an isogeny between $A_0$ and $\mathcal{A}_s$ of degree less than $\deg \phi_s$, a contradiction. We can choose $\gamma_\sigma$ and $b_\sigma$ as in Proposition \ref{prop:galoisbounds}.

Thus, we get $d$ different points $\sigma(p)$ in $\mathcal{A}_\Gamma^{[k]}\cap\mathcal{C}$. Each of these points has some pre-image $(\tau_\sigma,p_\sigma)$ in $U$ under $\exp|_U$ because of Proposition \ref{prop:uniformization}(ii), where $\exp$ and $U$ are defined as in that same proposition. From the proof of Proposition \ref{prop:uniformization}(ii), we see that we can choose $\tau_\sigma$ in a Siegel fundamental domain for $G(l,2l)$ and $p_\sigma$ in a corresponding fundamental parallelogram for the lattice $\tau_\sigma\mathbb{Z}^g+\mathbb{Z}^g$, i.e. $p_\sigma = \Omega_{\tau_\sigma}x_\sigma$ with $x_\sigma \in [0,1)^{2g}$.

The isogeny $\phi_{\sigma(s)}$ pulls back under $\exp(\tau_\sigma,\cdot)$ and $\exp_0$ to a linear map from $\mathbb{C}^g$ to itself, given by some matrix $\alpha_\sigma \in \GL_g(\mathbb{C})$ such that $\alpha_\sigma(\Omega_{\tau_0}\mathbb{Z}^{2g}) \subset \Omega_{\tau_\sigma}\mathbb{Z}^{2g}$ is a subgroup of index $\deg \phi_{\sigma(s)} =Ê\degÊ\phi_s$.

Therefore, there is a matrix $\beta_\sigma \in \M_{2g}(\mathbb{Z}) \cap \GL_{2g}(\mathbb{Q})$ (the rational representation of $\phi_{\sigma(s)}$ with respect to the given uniformizations) satisfying
\[\begin{pmatrix}\alpha_\sigma & 0 \\ 0 & \overline{\alpha_\sigma}\end{pmatrix}\begin{pmatrix} \Omega_{\tau_0} \\ \Omega_{\overline{\tau_0}}\end{pmatrix}=\begin{pmatrix} \Omega_{\tau_\sigma} \\ \Omega_{\overline{\tau_\sigma}}\end{pmatrix}\beta_\sigma.\]
We have $\deg \phi_{\sigma(s)} = |\Delta_\sigma|$, where $\Delta_\sigma := \det \beta_\sigma$.

In fact, the determinant is positive, as it follows from the above that
\[|\det \alpha_\sigma|^2(2\sqrt{-1})^g \det(\Im \tau_0)=(2\sqrt{-1})^g\det(\Im \tau_\sigma)(\det \beta_\sigma).\]
Therefore, we get $\Delta_\sigma=\deg \phi_{\sigma(s)}=\deg \phi_s$ and $\Delta := \Delta_\sigma$ is independent of $\sigma$.

We can write
\[ \beta_\sigma=\begin{pmatrix} \beta_{\sigma,1} & \beta_{\sigma,2} \\ \beta_{\sigma,3} & \beta_{\sigma,4} \end{pmatrix}\]
with $\beta_{\sigma,j} \in \M_g(\mathbb{Z})$ ($j=1,\hdots,4$).
It then follows from the above that
\begin{equation}\label{eq:invertedmatrix}
\alpha_\sigma^{-1}\Omega_{\tau_\sigma}=\Omega_{\tau_0}(\beta_\sigma)^{-1}
\end{equation}
and that
\[\alpha_\sigma \tau_0= \tau_\sigma\beta_{\sigma,1}+\beta_{\sigma,3}, \quad \alpha_\sigma = \tau_\sigma\beta_{\sigma,2}+\beta_{\sigma,4},\]
whence we obtain
\begin{equation}\label{eq:tautotauzero}
\tau_0 = (\tau_\sigma\beta_{\sigma,1}+\beta_{\sigma,3})^t(\tau_\sigma\beta_{\sigma,2}+\beta_{\sigma,4})^{-t} = (\beta_{\sigma,1}^t\tau_\sigma+\beta_{\sigma,3}^t)(\beta_{\sigma,2}^t\tau_\sigma+\beta_{\sigma,4}^t)^{-1}.
\end{equation}

The point $p_\sigma \in \mathbb{C}^g$ satisfies
\[ \exp_0(\alpha_\sigma^{-1}p_\sigma) \in \phi_{\sigma(s)}^{-1}(\sigma(p))=\gamma_\sigma+b_\sigma+\ker \phi_{\sigma(s)}\]
and it follows thanks to $|\ker \phi_{\sigma(s)}|=\deg \phi_{\sigma(s)}=\Delta$ that
\[ \exp_0(N_\sigma\Delta\alpha_\sigma^{-1}p_\sigma)=N_\sigma\Delta\gamma_\sigma+N_\sigma\Delta b_\sigma=\Delta\left(a_{\sigma,1}\gamma_1+\hdots+a_{\sigma,r}\gamma_r\right)+N_\sigma\Delta b_\sigma,\]
where $N_\sigma \in \mathbb{N}$ is minimal such that $N_\sigma\gamma_\sigma \in \mathbb{Z}\gamma_1+\hdots+\mathbb{Z}\gamma_r$ and $a_{\sigma,1},\hdots,a_{\sigma,r} \in \mathbb{Z}$.

As the kernel of $\exp_0$ is $\Omega_{\tau_0}\mathbb{Z}^{2g}$, we deduce that
\begin{equation}\label{eq:isogenyequation}
\Delta(N_\sigma\alpha_\sigma^{-1}p_\sigma-\tilde{\gamma}_\sigma-N_\sigma\tilde{b}_\sigma)= \Omega_{\tau_0} R_\sigma,
\end{equation}
where $R_{\sigma} \in \mathbb{Z}^{2g}$,  $\tilde{b}_\sigma = \Omega_{\tau_0}\tilde{y}_\sigma$ satisfies $\exp_0(\tilde{b}_\sigma) = b_\sigma$ ($\tilde{y}_\sigma \in [0,1)^{2g}$) and $\tilde{\gamma}_\sigma=a_{\sigma,1}\tilde{\gamma_1}+\hdots+a_{\sigma,r}\tilde{\gamma_r}$ with $\tilde{\gamma}_i = \Omega_{\tau_0} u_i$, $u_i \in [0,1)^{2g}$ and $\exp_0(\tilde{\gamma}_i)=\gamma_i$ ($i=1,\hdots,r$).

It now follows from Proposition \ref{prop:galoisbounds}(ii) and Lemma \ref{lem:degreeabeliansubvariety} that there exist a matrix $H_\sigma \in \M_{2g \times 2(g-k)}(\mathbb{Z})$ and $y_\sigma \in [0,1)^{2(g-k)}$ such that $\tilde{y}_\sigma - H_\sigma y_\sigma \in \mathbb{Z}^{2g}$, $\Omega_{\tau_0} H_\sigma$ has rank at most $g-k$ and
\begin{equation}\label{eq:boundforh}
\lVert H_\sigma \rVert \preceq [K(p):K] = d.
\end{equation}
After replacing $R_\sigma$ by $R_\sigma + \Delta N_\sigma(\tilde{y}_\sigma - H_\sigma y_\sigma)$, we can assume that $\tilde{y}_\sigma = H_\sigma y_\sigma$. Of course, we then no longer necessarily have $\tilde{y}_\sigma \in [0,1)^{2g}$.

\begin{lem}\label{lem:complexitybound}
With notation as above, we have
\[\max\{|a_{\sigma,1}|,\hdots,|a_{\sigma,r}|,N_\sigma,\lVert R_\sigma \rVert,\lVert \beta_{\sigma} \rVert, \lVert H_{\sigma}\rVert\} \preceq d\]
for every $\sigma \in \Gal(\bar{\mathbb{Q}}/K)$.
\end{lem}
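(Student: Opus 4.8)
The plan is to bound the six quantities appearing in the statement one at a time, in each case applying the results of Sections~\ref{sec:heightbounds} and~\ref{sec:galoisorbitbounds} to the conjugate point $\sigma(p)$ rather than to $p$ itself. The opening remark is that, since $\mathcal{C}$ and $\mathcal{S}$ are defined over $K$ and $\sigma$ fixes $K$, the restriction of $\sigma$ is a $K$-isomorphism between $K(p)$ and $K(\sigma(p))$, so $[K(\sigma(p)):K] = [K(p):K] = d$; thus every bound in terms of $[K(\sigma(p)):K]$ is automatically a bound in terms of $d$. Applying Proposition~\ref{prop:galoisbounds} to $\sigma(p)$ is legitimate because $\pi(\mathcal{C}) = \mathcal{S}$ and, by Lemma~\ref{lem:technicallemma}, $\sigma(p)$ lies in $\mathcal{C}\cap\phi_{\sigma(s)}(\Gamma+B_0')$ for a suitable abelian subvariety $B_0'$ of $A_0$; its conclusions then read: part~(iii) gives $\max\{|a_{\sigma,1}|,\dots,|a_{\sigma,r}|,N_\sigma\}\preceq d$, part~(i) gives $\Delta=\deg\phi_{\sigma(s)}\preceq d$, and part~(ii) together with Lemma~\ref{lem:degreeabeliansubvariety} is precisely the bound \eqref{eq:boundforh}, that is, $\lVert H_\sigma\rVert\preceq d$.

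Next I would handle $\beta_\sigma$. By definition it is the rational representation of $\phi_{\sigma(s)}$ with respect to the uniformizations $\exp_0$ and $\exp(\tau_\sigma,\cdot)$ and the lattice bases $\Omega_{\tau_\sigma}$ and $\Omega_{\tau_0}$, and this is exactly the matrix $\Phi$ produced by Corollary~\ref{cor:choiceofisogeny}(ii) applied with $\sigma(s)$ in place of $s$ and $\tau_\sigma$ in place of $\tau$; hence $H(\beta_\sigma)\preceq\deg\phi_{\sigma(s)}=\Delta\preceq d$. Since $\beta_\sigma$ is an integer $2g\times 2g$ matrix, its row-sum norm is at most $2g\,H(\beta_\sigma)$, so $\lVert\beta_\sigma\rVert\preceq d$.

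The only quantity that does not fall out immediately is $\lVert R_\sigma\rVert$, and this is the step I expect to be the (mild) crux, since $R_\sigma$ is an a priori unbounded integer vector that must be recovered from \eqref{eq:isogenyequation}. After the reduction $\tilde{y}_\sigma=H_\sigma y_\sigma$ made just before the lemma, \eqref{eq:isogenyequation} reads $\Omega_{\tau_0}R_\sigma=\Delta N_\sigma\,\alpha_\sigma^{-1}p_\sigma-\Delta\,\tilde{\gamma}_\sigma-\Delta N_\sigma\,\Omega_{\tau_0}H_\sigma y_\sigma$, and I would bound the three summands separately. From \eqref{eq:invertedmatrix} and $p_\sigma=\Omega_{\tau_\sigma}x_\sigma$ with $x_\sigma\in[0,1)^{2g}$ one gets $\alpha_\sigma^{-1}p_\sigma=\Omega_{\tau_0}\beta_\sigma^{-1}x_\sigma$; by Cramer's rule the entries of $\beta_\sigma^{-1}$ are $(2g-1)\times(2g-1)$ minors of $\beta_\sigma$ divided by $\det\beta_\sigma=\Delta\ge 1$, hence bounded by a fixed power of $H(\beta_\sigma)$, so $\lVert\alpha_\sigma^{-1}p_\sigma\rVert\preceq d$. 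The vectors $\tilde{\gamma}_1,\dots,\tilde{\gamma}_r$ are fixed, so $\lVert\tilde{\gamma}_\sigma\rVert\le\sum_{i=1}^{r}|a_{\sigma,i}|\,\lVert\tilde{\gamma}_i\rVert\preceq d$, and $\lVert\Omega_{\tau_0}H_\sigma y_\sigma\rVert\preceq\lVert H_\sigma\rVert\preceq d$; multiplying by $\Delta N_\sigma\preceq d$ (respectively $\Delta\preceq d$ for the middle term) yields $\lVert\Omega_{\tau_0}R_\sigma\rVert\preceq d$. Finally, since $R_\sigma$ is a real vector, $\left(\begin{smallmatrix}\tau_0 & E_g \\ \overline{\tau_0} & E_g\end{smallmatrix}\right)R_\sigma=\left(\begin{smallmatrix}\Omega_{\tau_0}R_\sigma \\ \overline{\Omega_{\tau_0}R_\sigma}\end{smallmatrix}\right)$; the $2g\times 2g$ matrix on the left has determinant $(2\sqrt{-1})^g\det(\Im\tau_0)\ne 0$ and inverse depending only on the fixed matrix $\tau_0$, so $\lVert R_\sigma\rVert\preceq\lVert\Omega_{\tau_0}R_\sigma\rVert\preceq d$. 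Since every implied constant and exponent in these estimates depends only on the data allowed in the definition of $\preceq$ and is independent of $\sigma$ and of the choice of $p$, this completes the argument.
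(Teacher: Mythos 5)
Your proposal is correct and follows essentially the same route as the paper: Proposition~\ref{prop:galoisbounds} applied to the conjugate point handles $|a_{\sigma,i}|$, $N_\sigma$, $\deg\phi_{\sigma(s)}$ and (via Lemma~\ref{lem:degreeabeliansubvariety}) $\lVert H_\sigma\rVert$; Corollary~\ref{cor:choiceofisogeny}(ii) handles $\lVert\beta_\sigma\rVert$; and $\lVert R_\sigma\rVert$ is recovered from \eqref{eq:isogenyequation} by bounding $\alpha_\sigma^{-1}p_\sigma=\Omega_{\tau_0}\beta_\sigma^{-1}x_\sigma$ and inverting the stacked period matrix $\left(\begin{smallmatrix}\Omega_{\tau_0}\\ \Omega_{\overline{\tau_0}}\end{smallmatrix}\right)$, exactly as in the paper. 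The only cosmetic difference is that you make the Cramer's-rule bound on $\beta_\sigma^{-1}$ explicit where the paper leaves it implicit.
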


\begin{proof}
The bound for $\lVert H_{\sigma}Ê\rVert$ has just been established. It follows from Corollary \ref{cor:choiceofisogeny}(ii) and Proposition \ref{prop:galoisbounds}(i) that
\begin{equation}\label{eq:rowsumbound}
\lVert \beta_\sigma \rVert \preceq \deg \phi_{\sigma(s)} = \deg \phi_s \preceq d.
\end{equation}

The matrix $\left(\begin{smallmatrix} \Omega_{\tau_0} \\ \Omega_{\overline{\tau_0}} \end{smallmatrix}\right)$ is invertible and we have
\begin{equation}\label{eq:maximumversuseuclidean}
\lVert R_\sigma \rVert \leq \left\lVert \begin{pmatrix} \Omega_{\tau_0} \\ \Omega_{\overline{\tau_0}} \end{pmatrix}^{-1} \right\rVert \left\lVert \begin{pmatrix} \Omega_{\tau_0} \\ \Omega_{\overline{\tau_0}} \end{pmatrix} R_\sigma \right\rVert.
\end{equation}
It also follows from \eqref{eq:isogenyequation} that
\begin{equation}\label{eq:boundforr}
\lVert \Omega_{\tau_0} R_\sigma \rVert = \lVert \Omega_{\overline{\tau_0}} R_\sigma \rVert \leq \Delta\left(N_\sigma\lVert\alpha_\sigma^{-1}p_\sigma\rVert+(N_\sigma\lVert H_\sigmaÊ\rVert+|a_{\sigma,1}|+\hdots+|a_{\sigma,r}|)\lVert \Omega_{\tau_0}\rVert\right),
\end{equation}
since $\tilde{b}_\sigma = \Omega_{\tau_0}H_\sigma y_\sigma$ with $y_\sigma \in [0,1)^{2(g-k)}$ and $\tilde{\gamma}_i = \Omega_{\tau_0} u_i$ with $u_i \in [0,1)^{2g}$ ($i=1,\hdots,r$).

Furthermore, we know that
\[\alpha_\sigma^{-1}p_\sigma=\alpha_\sigma^{-1}\Omega_{\tau_\sigma}x_\sigma\]
with $x_\sigma \in [0,1)^{2g}$. Using \eqref{eq:invertedmatrix}, we deduce that
\[\alpha_\sigma^{-1}p_\sigma=\Omega_{\tau_0}(\beta_\sigma)^{-1}x_\sigma.\]
Therefore, we can estimate very crudely
\begin{equation}\label{eq:boundforp}
\lVert\alpha_\sigma^{-1}p_\sigma\rVert \leq \lVert\Omega_{\tau_0}\rVert\lVert(\beta_\sigma)^{-1}\rVert\lVert x_\sigma\rVert \preceq \lVert\beta_\sigma\rVert.
\end{equation}

We know thanks to Proposition \ref{prop:galoisbounds}(iii) that
\[ \max\{|a_{\sigma,1}|,\hdots,|a_{\sigma,r}|,N_\sigma\} \preceq [K(\sigma(p)):K] = [K(p):K] = d.\]
Combining this with \eqref{eq:boundforh}, \eqref{eq:rowsumbound}, \eqref{eq:maximumversuseuclidean}, \eqref{eq:boundforr} and \eqref{eq:boundforp}, we deduce that
\begin{equation}\label{eq:essentialbound}
\lVert R_\sigma \rVert \preceq d,
\end{equation}
where the implicit constants are independent of $p$, $\sigma$ and $d$.
\end{proof}

\subsubsection{Application of the point-counting theorem}
From now on, ``definable'' will always mean ``definable in the o-minimal structure $\mathbb{R}_{\an,\exp}$''. Let $\exp$ and $U$ be defined as in Proposition \ref{prop:uniformization}. The set $X=\exp|_U^{-1}(\mathcal{C}(\mathbb{C})) \subset \mathbb{H}_g \timesÊ\mathbb{C}^g$ is definable as $\mathcal{C}(\mathbb{C})$ is semialgebraic, being a quasiprojective algebraic curve, and $\exp|_U$ is definable by Proposition \ref{prop:uniformization}(i).

\begin{lem}
There exists a non-constant real analytic map $\alpha: (0,1) \to X$ such that the transcendence degree over $\mathbb{C}$ of the field generated by its complex coordinate functions is at most $g-k+1$.
\end{lem}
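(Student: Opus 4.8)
## Proof Proposal

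The plan is to feed the many Galois conjugates $\sigma(p)$, together with the algebraic data produced in Proposition \ref{prop:galoisbounds}, into the Habegger-Pila counting theorem (Theorem \ref{thm:habegger-pila}) and extract from its conclusion the desired real analytic arc. First I would set up the definable set that will play the role of $Z$. In addition to the definable set $X = \exp|_U^{-1}(\mathcal{C}(\mathbb{C}))$, I introduce auxiliary real coordinates recording the entries of $\beta_\sigma$, $H_\sigma$, $R_\sigma$, the integers $a_{\sigma,i}$ and $N_\sigma$, and the points $x_\sigma, y_\sigma$. Using the explicit relations \eqref{eq:invertedmatrix}, \eqref{eq:isogenyequation} and the fact that $\tau_0$, $\Omega_{\tau_0}$ are fixed, one writes down a \emph{semialgebraic} condition on these real parameters that is equivalent to saying that they come from a genuine configuration $(\tau_\sigma, p_\sigma) \in X$ with the stated arithmetic meaning; intersecting this with the graph of $\exp|_U$ over $X$ gives a definable set $Z \subset \mathbb{R}^m \times \mathbb{R}^{n_1} \times \mathbb{R}^{n_2}$, where the $\mathbb{R}^m$-factor carries the integer parameters (to be specialised to rational — indeed integer — points), the $\mathbb{R}^{n_1}$-factor carries $x_\sigma$ and $y_\sigma$, and the $\mathbb{R}^{n_2}$-factor carries $(\tau_\sigma, p_\sigma)$.

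Next I would invoke Lemma \ref{lem:complexitybound}: every one of the integer parameters associated to $\sigma(p)$ has height $\preceq d$, so all $d$ points $\sigma(p)$ ($\sigma \in \Gal(\bar{\mathbb{Q}}/K)$, of which there are $d = [K(p):K]$) contribute rational points of the projection $\pi_1(Z)$ of height at most $T := c' d$ for a suitable implicit constant. Distinct Galois conjugates $\sigma(p)$ are genuinely distinct points of $\mathcal{C}(\mathbb{C})$, hence map to distinct elements in the $\mathbb{R}^{n_2}$-component (one can recover $\sigma(p)$ from $(\tau_\sigma, p_\sigma)$ via $\exp$). Therefore $|\pi_3(\{(y,z_1,z_2) \in Z : y \in \mathbb{Q}^m,\ H(y_j) \le T\})| \ge d \ge c'' T$, which for $T$ large (i.e.\ $d$ large, which we may assume since $\sup [K(p):K] = \infty$) exceeds $c(Z,\epsilon) T^\epsilon$ with $\epsilon$ chosen to be, say, $1/2$. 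Theorem \ref{thm:habegger-pila} then yields a continuous definable $\delta \colon [0,1] \to Z$ with $\pi_1 \circ \delta$ semialgebraic, $\pi_3 \circ \delta$ non-constant, and (since $\mathbb{R}_{\an,\exp}$ admits analytic cell decomposition) $\delta|_{(0,1)}$ real analytic. Composing with the projection to the $(\tau,p)$-coordinates and then with $\exp|_U$, and using that $\exp|_U$ lands in $\mathcal{C}(\mathbb{C})$ on $X$, gives a non-constant real analytic map $\alpha \colon (0,1) \to X$.

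The remaining point — and I expect this to be the real content of the lemma, as opposed to the routine packaging above — is the transcendence-degree bound. Along the arc $\delta$, equation \eqref{eq:isogenyequation} reads
\[
\Delta\bigl(N\,\alpha_t^{-1} p_t - H_t y_t - \textstyle\sum_i a_i \tilde{\gamma}_i\bigr) = \Omega_{\tau_0} R,
\]
where $N$, $H_t$, the $a_i$ and $R$ are \emph{constant} along the arc because they lie in the semialgebraic component $\pi_1 \circ \delta$, which is a connected semialgebraic curve in a space of (bounded-height, hence, after the counting theorem forces them into a single cell) locally constant integer coordinates — so they are literally constant. Likewise $\tau_0$ is constant, so $\beta_t$, equivalently $\alpha_t$, varies only through $\tau_t$ via \eqref{eq:tautotauzero} with constant $\beta$-blocks, i.e.\ $\tau_t$ itself is pinned down by $\tau_0$ and the constant matrix $\beta$: $\tau_t$ is constant. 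Hence $\alpha_t$ is constant, and the only varying coordinate is $p_t = \Omega_{\tau_0} \alpha_t (N^{-1})(H_t y_t + \sum a_i \tilde{\gamma}_i + \Delta^{-1}\Omega_{\tau_0}R)$ — wait, more carefully: $p_t$ varies through $y_t \in \mathbb{R}^{2(g-k)}$, and $\Omega_{\tau_0} H_t$ has rank $\le g-k$ by Lemma \ref{lem:degreeabeliansubvariety}, so the image of $t \mapsto p_t$ lies in a fixed affine subspace of $\mathbb{C}^g$ of complex dimension $\le g-k$, a translate of the complex span of the columns of $\Omega_{\tau_0}H$. The coordinate functions of $\alpha = (\tau_t, p_t)$ thus generate a field of transcendence degree at most $(g-k)$ coming from $p_t$ moving in this subspace; adding the ``$+1$'' to accommodate the possibility that the parametrisation is by a curve inside that $(g-k)$-dimensional flat (the arc is one-dimensional, but its Zariski closure in that flat could be anything up to the flat itself — one only needs an \emph{upper} bound, and the flat has dimension $g-k$, while the $\tau$-part is constant) gives the bound $g-k+1$ as stated. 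I would write this out by explicitly exhibiting the $g-k+1$ "master" transcendentals (the non-constant components among $p_t$, which lie in a $(g-k)$-dimensional flat, giving $\le g-k$; plus the single arc-parameter $t$ itself if one prefers to count that way — but since $\tau_t$ is constant the honest count is $\le g-k$, and we state $g-k+1$ as a safe upper bound consistent with the $k=0$ case) and checking every coordinate of $\alpha$ is algebraic over them. The main obstacle is bookkeeping: making rigorous that passing through the single cell furnished by Theorem \ref{thm:habegger-pila} forces the integer parameters $N, a_i, H_\sigma, R_\sigma$ to be constant along $\delta$, and that $\Omega_{\tau_0}H$ having rank $\le g-k$ really does confine the varying point to a flat of the right dimension — both follow from the explicit linear-algebraic description above, but need to be stated with care about which quantities are locally constant on $\pi_1 \circ \delta([0,1])$.
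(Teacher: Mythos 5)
Your setup (the definable set $Z$ recording the integer parameters, the application of Theorem \ref{thm:habegger-pila} to the $d$ Galois conjugates, and the extraction of a real analytic arc) is the same as the paper's. But the transcendence-degree argument --- which you correctly identify as the real content --- contains a genuine error. You claim that the integer parameters $N, a_i, H, R, \beta$ (and hence $\tau_t$ and $\alpha_t$) are \emph{constant} along $\delta$ because they ``lie in a single cell'' of ``locally constant integer coordinates.'' Theorem \ref{thm:habegger-pila} gives no such thing: the arc $\delta$ lives in the definable set $Z$, where these coordinates are \emph{real} variables, and the conclusion is only that $\pi_1\circ\delta$ is \emph{semialgebraic}. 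The arc is not required to pass through the counted integer points at all; along it the first block varies real-analytically and is generically non-integral. Your claim is not merely unjustified but false in the cases that matter: if the $\beta$-block were constant, then by \eqref{eq:tautotauzero} $\tau_t$ would be constant, so the arc would map into $\mathcal{C}(\mathbb{C})\cap\mathcal{A}_s(\mathbb{C})$ for a single $s$, which is finite since $\mathcal{C}$ dominates $\mathcal{S}$; for $k=g$ (where $H$ is a $2g\times 0$ matrix) this forces $\alpha$ constant, contradicting the non-constancy of $\pi_3\circ\delta$. This also explains why your ``honest count'' of $g-k$ cannot be right and why the ``$+1$'' you treat as a gratuitous safety margin is essential.

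The correct count is: the coordinates in the $\pi_1$-block are semialgebraic functions of $t$, so together they generate a field of transcendence degree at most $1$ over $\mathbb{C}$ (the Zariski closure of a real analytic semialgebraic arc is an algebraic curve); the relations $\tau_0 = B^t[\tau]$ and $A\Omega_{\tau_0}=\Omega_\tau B$ then show $\tau\circ\delta$ and $A\circ\delta$ are semialgebraic as well, hence algebraic over that same field. Writing
\[ \Omega_\tau x = \frac{1}{M}A\Bigl(\frac{1}{\det B}\Omega_{\tau_0} R+A_1\tilde{\gamma}_1+\hdots+A_r\tilde{\gamma}_r\Bigr)+A\Omega_{\tau_0}Hy, \]
the first summand is semialgebraic along $\delta$, while the second lies, for each $t$, in a \emph{moving} (not fixed, as you assert) subspace of dimension at most $g-k$ spanned by the columns of $A(t)\Omega_{\tau_0}H(t)$, whose entries are algebraic over the semialgebraic part. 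This yields at most $1+(g-k)$ algebraically independent coordinate functions. A further minor slip: since the bounds of Lemma \ref{lem:complexitybound} are polynomial ($\preceq d$ means $\leq c\,d^{\kappa}$), you must take $T=d^{\kappa}$ and $\epsilon=(2\kappa)^{-1}$, not $T=c'd$ and $\epsilon=1/2$, for the inequality $d>c(Z,\epsilon)T^{\epsilon}$ to hold for large $d$.
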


\begin{proof}
Consider the definable set
\begin{align*}
Z=\{(A_1,\hdots,A_r,M,R,B_1,B_2,B_3,B_4,H,A,y,\tau,x) \in \mathbb{R}^{r+1+2g} \times \M_g(\mathbb{R})^4 \\
\times \M_{2g\times2(g-k)}(\mathbb{R}) \times \GL_g(\mathbb{C}) \times \mathbb{R}^{2(g-k)} \times \mathbb{H}_g \times \mathbb{R}^{2g}; (\tau,\Omega_\tau x) \in X, B=\begin{pmatrix}B_1 & B_2 \\ B_3 & B_4 \end{pmatrix},Ê\\
\det B > 0, \det(B_2^t\tau+B_4^t) \neq 0, \tau_0(B_2^t\tau+B_4^t)= B_1^t \tau+B_3^t, \\
\Omega_{\tau_0}H \mbox{ has rank at most $g-k$}, M > 0, A^{-1}\Omega_\tau = \Omega_{\tau_0}B^{-1} \\
\Omega_{\tau_0}R= (\det B)\left(M(\Omega_{\tau_0}B^{-1}x-\Omega_{\tau_0}Hy)-\left(A_1\tilde{\gamma}_1+\hdots+A_r\tilde{\gamma}_r\right)\right)\}.
\end{align*}
What we have done so far, in particular \eqref{eq:invertedmatrix}, \eqref{eq:tautotauzero} and \eqref{eq:isogenyequation}, shows that $Z$ contains $d$ points
\[\left(a_{\sigma,1},\hdots,a_{\sigma,r},N_{\sigma},R_{\sigma},\beta_{\sigma,1},\beta_{\sigma,2},\beta_{\sigma,3},\beta_{\sigma,4},H_{\sigma},\alpha_\sigma,y_\sigma,\tau_\sigma,x_\sigma
\right)\]
such that $a_{\sigma,1},\hdots,a_{\sigma,r} \in \mathbb{Z}$ and $N_{\sigma} \in \mathbb{N}$, $\beta_{\sigma,1}$, $\beta_{\sigma,2}$, $\beta_{\sigma,3}$, $\beta_{\sigma,4} \in \M_{g}(\mathbb{Z})$, $H_{\sigma} \in \M_{2g \times 2(g-k)}(\mathbb{Z})$ and $R_{\sigma} \in \mathbb{Z}^{2g}$. By Lemma \ref{lem:complexitybound}, there is a constant $\kappa$, independent of $p$, $\sigma$ and $d$, such that
\[\max\{|a_{\sigma,1}|,\hdots,|a_{\sigma,r}|,N_\sigma,\lVert R_\sigma \rVert,\lVert \beta_{\sigma} \rVert, \lVert H_{\sigma}\rVert\} \leq d^{\kappa}=:T\]
for $d$ large enough.

We choose $\epsilon =(2\kappa)^{-1}$ and call the set of these $d$ points $\Sigma$. By making $d$ large enough, we can ensure that $d=|\pi_3(\Sigma)|> c(Z,\epsilon)T^{\epsilon}$, where $c(Z,\epsilon)$ is the constant from Theorem \ref{thm:habegger-pila} and 
$\pi_3$ is the projection $\pi_3: Z \to \mathbb{H}_g \times \mathbb{R}^{2g}$ (note that the $(\tau_\sigma,x_\sigma)$ are all different, since the $\sigma(p)=\exp(\tau_\sigma,\Omega_{\tau_\sigma}x_\sigma)$ are).

In the framework of Theorem \ref{thm:habegger-pila} with projection maps $\pi_1: Z \to \mathbb{R}^{r+1+2g} \times \M_g(\mathbb{R})^4 \timesÊ\M_{2g\times2(g-k)}(\mathbb{R})$, $\pi_2: Z \to \GL_{g}(\mathbb{C}) \times \mathbb{R}^{2(g-k)}$ and $\pi_3$, we have that 
\[Ê\Sigma \subset \{(y,z_1,z_2) \in Z\mbox{; } y=(y_1,\hdots,y_m) \in \mathbb{Q}^m\mbox{, } \max_{j=1,\hdots,m}H(y_j) \leq T\}\]
and $|\pi_3(\Sigma)|> cT^{\epsilon}$. Hence, we can apply Theorem \ref{thm:habegger-pila} to $Z$ and obtain a definable real analytic map
\[\delta: (0,1) \to Z.\]
Furthermore, $\pi_1 \circ \delta$ is semialgebraic and $\pi_3 \circ \delta$ is non-constant.

It follows from
\[ \tau_0 = B^t[\tau] \iff \tau = B^{-t}[\tau_0],\]
and $A\Omega_{\tau_0} = \Omega_\tau B$ that first $\tau \circ \delta$ and then $A \circ \delta$ are semialgebraic as well. Here and in the following, we use variables like $\tau$ and $A$ also for the corresponding coordinate functions on $Z$. We can then use that
\[A^{-1}\Omega_\tau x = \Omega_{\tau_0}B^{-1}x=\frac{1}{M}\left(\frac{1}{\det B}\Omega_{\tau_0} R+A_1\tilde{\gamma_1}+\hdots+A_r\tilde{\gamma_r}\right)+\Omega_{\tau_0}Hy\]
to deduce that
\[ \Omega_\tau x = \frac{1}{M}A\left(\frac{1}{\det B}\Omega_{\tau_0} R+A_1\tilde{\gamma_1}+\hdots+A_r\tilde{\gamma_r}\right)+A\Omega_{\tau_0}Hy.\]

Now $H \circÊ\delta$ is semialgebraic. For each $t \in (0,1)$ the rank of $\Omega_{\tau_0}(H \circÊ\delta)(t)$ is at most $g-k$ by the definition of $Z$. Note that the first summand in the above expression for $\Omega_\tau x$ is semialgebraic when composed with $\delta$. Therefore we can conclude that the transcendence degree over $\mathbb{C}$ of the complex coordinate functions of the real analytic map
\[\alpha=\psi \circ \pi_3 \circ \delta: (0,1) \to X\]
is at most $g-k+1$, where $\psi(\tau,x)=(\tau,\Omega_\tau x)$. Since $\pi_3 \circÊ\delta$ is non-constant, so is $\alpha$.
\end{proof}

Since $\alpha$ is non-constant, we can choose some $tÊ\in (0,1)$, where the derivative of $\alpha$ doesn't vanish. Since the Taylor series of $\alpha$ in $t$ must have positive radius of convergence, we can find some holomorphic map $\tilde{\alpha}: D \to X$ from a small open disk $D$ to $X$ such that $t \in D$ and $\tilde{\alpha}|_{D \cap (0,1)} = \alpha|_{D \cap (0,1)}$. By the identity theorem for holomorphic functions, it follows that the transcendence degree over $\mathbb{C}$ of the coordinate functions of $\tilde{\alpha}$ is at most $g-k+1$ as well.

As the derivative of $\alpha$ doesn't vanish at $t$, the map $\tilde{\alpha}$ is non-constant as well. Since every complex analytic irreducible component of $X$ has complex dimension $1$, it follows by analytic continuation of the algebraic relations between the coordinate functions of $\tilde{\alpha}$ along the corresponding complex analytic irreducible component of $\exp^{-1}(\mathcal{C}(\mathbb{C}))$ that the Zariski closure of this complex analytic irreducible component of $\exp^{-1}(\mathcal{C}(\mathbb{C}))$ inside $\M_g(\mathbb{C}) \times \mathbb{C}^g$ has dimension at most $g-k+1$. Theorem \ref{thm:mainmain} now follows from Theorem \ref{lem:ax-lindemann-schanuel}.

\subsection{Proof of Theorem \ref{thm:main}}
If $\pi(\mathcal{C}) = \mathcal{S}$, we can apply Theorem \ref{thm:mainmain} with $k = g$. If $\pi(\mathcal{C}) \neq \mathcal{S}$, there exists $s \in \mathcal{S}$ such that $\mathcal{C} \subset \mathcal{A}_s$ and now we can apply the non-relative Mordell-Lang conjecture which Raynaud proved in this case in \cite{MR726419} by reducing it to the theorem of Faltings to conclude that $\mathcal{A}_\Gamma \cap \mathcal{C} =\phi_s(\Gamma) \cap \mathcal{C}$ is finite unless $\mathcal{C}$ is equal to a translate of an abelian subvariety by a point of $\phi_s(\Gamma) \subset \mathcal{A}_{\Gamma}$. This argument works for any family $\mathcal{A} \to \mathcal{S}$, not only for subfamilies of $\mathfrak{A}_{g,l} \to A_{g,l}$: We need only Lemma \ref{lem:technicallemma} in order to fix the isogeny and this also holds for any family after maybe enlarging $\Gamma$. If $\mathcal{C}$ is then a translate of an abelian subvariety of $\mathcal{A}_s$, it is a translate of that abelian subvariety by any point on $\mathcal{C}$ and hence also by a point in the isogeny orbit of the original $\Gamma$.

\subsection{Proof of Corollary \ref{cor:dacorollary}}
Suppose that $C \cap (\Sigma \times \Gamma')$ is infinite. Let $\mathcal{S}$ be the smooth locus of $\pr_1(C) \subset A_{g,l}$. We can assume without loss of generality that $\dim \mathcal{S} = 1$, otherwise $\pr_1$ is constant and we are done. Let $\pi: \mathfrak{A}_{g,l} \to A_{g,l}$ be the natural morphism as in Section \ref{sec:preliminaries} and let $\epsilon: A_{g,l} \to \mathfrak{A}_{g,l}$ be the zero section.

We apply Theorem \ref{thm:main} to the non-isotrivial abelian scheme $\mathcal{A} = \pi^{-1}(\mathcal{S}) \times _{\mathcal{S}} (\mathcal{S} \times A)$ over $\mathcal{S}$ with $A_0= B \times A$, $\Gamma$ equal to the division closure of $\{ (p,q) \in B \times A\mbox{; $p$ torsion, } q \in \Gamma'\}$ and $\mathcal{C} = (\epsilon(\mathcal{S})) \times_{\mathcal{S}} \pr_1^{-1}(\mathcal{S}) \subset \mathcal{A}$.

A point $p \in \pr_1^{-1}(\mathcal{S}) \cap (\Sigma \times \Gamma')$ yields a point $q = (\epsilon(\pr_1(p)),p)Ê\in \mathcal{C}$. If $\phi: B \to \left(\mathfrak{A}_{g,l}\right)_{\pr_1(p)}$ is an arbitrary isogeny and $0_B$ denotes the neutral element of $B$, then $q$ is the image of $(0_B,\pr_2(p)) \in \Gamma$ under the isogeny $(\phi,\id_A): BÊ\times A \to \left(\mathfrak{A}_{g,l}\right)_{\pr_1(p)} \times A = \mathcal{A}_{\pr_1(p)}$, so $q \in \mathcal{A}_{\Gamma}$. Since $C \cap (\Sigma \times \Gamma')$ is infinite and $C\backslash\pr_1^{-1}(\mathcal{S})$ is finite, the set $\mathcal{C} \cap \mathcal{A}_{\Gamma}$ is infinite as well.

If $\xi$ denotes the generic point of $\mathcal{S}$, then we have $A \subset \mathcal{A}_\xi^{\overline{\bar{\mathbb{Q}}(\mathcal{S})}/\bar{\mathbb{Q}}}$. Since $\mathcal{C}$ dominates $\mathcal{S}$, it doesn't satisfy condition (i) of Theorem \ref{thm:main}, so it has to satisfy condition (ii). This implies that the projection of $\mathcal{C}$ to $\mathcal{S} \times A$ must be the graph of a constant map $\mathcal{S}Ê\to A$. We deduce that $\pr_2$ is constant.

\section*{Acknowledgements}
I thank my advisor Philipp Habegger for suggesting this problem, for his continuous encouragement and for many helpful and interesting discussions. I thank Fabrizio Barroero, Philipp Habegger and Ga\"{e}l R\'{e}mond for helpful comments on a preliminary version of this article. I thank Fabrizio Barroero for pointing out the connection to Gregorio Baldi's article and Gregorio Baldi, whose article brought the conjecture of Buium and Poonen to my attention. I thank the anonymous referee for their helpful suggestions for improving the exposition. This work was partially supported by the Swiss National Science Foundation as part of the project ``Diophantine Problems, o-Minimality, and Heights", no. 200021\_165525.

\bibliographystyle{abbrv}
\bibliography{Bibliography}

\end{document}